\newtheorem{thm}{Theorem}[section]
\newtheorem{coro}[thm]{Corollary}
\newtheorem{prop}[thm]{Proposition}
\newtheorem{lem}[thm]{Lemma}
\theoremstyle{definition}
\newtheorem{defn}[thm]{Definition}
\newtheorem{question}[thm]{Question}
\newcommand{\Rset}{\mathbb{R}}
\newcommand{\Nset}{\omega}
\newcommand{\Cset}{2^\Nset}
\newcommand{\CCset}{2^{<\Nset}}
\newcommand{\Pset}{\Nset^\Nset}
\newcommand{\UPset}{\Nset^{\uparrow\Nset}}
\newcommand{\eps}{\varepsilon}
\newcommand{\del}{\delta}
\newcommand{\subs}{\subseteq}
\newcommand{\sups}{\supseteq}
\newcommand{\clos}[1]{\overline{#1}}
\newcommand{\upto}{{\nearrow}}
\newcommand{\concat}{^\smallfrown\hspace{-0.3ex}}
\newcommand{\rest}{{\restriction}}
\renewcommand{\leq}{\leqslant}
\renewcommand{\geq}{\geqslant}
\newcommand{\abs}[1]{\lvert#1\rvert}
\newcommand{\seq}[1]{\langle#1\rangle}
\newcommand{\seqeps}{\seq{\eps_n}}
\newcommand{\Seqeps}{\seqeps\in(0,\infty)^\Nset}
\newcommand{\si}{$\sigma$\nobreakdash-}
\newcommand{\hm}{\mathcal H}
\newcommand{\uhm}{\overline{\mathcal H}{}}
\newcommand{\cyl}[1]{[\mkern-3mu[#1]\mkern-3mu]}
\newcommand{\whm}{\boldsymbol{\lambda}}
\newcommand{\upint}{\int^*}
\DeclareMathOperator{\id}{id}
\DeclareMathOperator{\hdim}{\dim_{\mathsf{H}}}
\DeclareMathOperator{\uhdim}{\overline{\dim}_{\mathsf{H}}}
\DeclareMathOperator{\diam}{diam}
\newcommand{\NN}{\mathcal N}
\newcommand{\NNs}{\mathcal N_\sigma}
\newcommand{\MM}{\mathcal M}
\newcommand{\EE}{\mathcal E}
\newcommand{\CC}{\mathcal C}
\newcommand{\smz}{${\boldsymbol{\mathsf{Smz}}}$}
\newcommand{\ssmz}{${\boldsymbol{\mathsf{Smz}}^\sharp}$}
\newcommand{\mc}[1]{\mathcal{#1}}
\newcommand{\CCC}{\boldsymbol{\mc C}}
\newcommand{\hnull}{$\boldsymbol{\mc H}$-{\rm null}}
\newcommand{\fmany}{\forall^\infty}
\newenvironment{enum}{\begin{enumerate}[\rm(i)]}{\end{enumerate}}
{\begin{list}{\textbullet}{\setlength{\labelwidth}{1ex}\setlength{\leftmargin}{1.1em}}}%
{\end{list}}
\newcommand{\Implies}{\ensuremath{\Rightarrow}}
\begin{document}
\title
[Strong measure zero and meager-additive sets]
{Strong measure zero and meager-additive sets\\ through the prism of fractal measures}
\author{Ond\v rej Zindulka}
\address
{Department of Mathematics\\
Faculty of Civil Engineering\\
Czech Technical University\\
Th\'akurova 7\\
160 00 Prague 6\\
Czech Republic}
\email{ondrej.zindulka@cvut.cz}
\urladdr{http://mat.fsv.cvut.cz/zindulka}
\subjclass[2000]{03E05,03E20,28A78}
\keywords{meager-additive, $\EE$-additive, strong measure zero, sharp measure zero,
Hausdorff dimension, Hausdorff measure}
\thanks{The author was supported by the grant GA\v CR 15--082185
of the Grant Agency of the Czech Republic.
Work on this project was partially conducted during the author's sabbatical stay
at the Instituto de matem\'aticas, Unidad Morelia,
Universidad Nacional Auton\'oma de M\'exico supported by CONACyT grant no.~125108.
}
\begin{abstract}
We develop a theory of \emph{sharp measure zero} sets that parallels
Borel's \emph{strong measure zero}, and
prove a theorem analogous to Galvin-Myscielski-Solovay Theorem, namely
that a set of reals has sharp measure zero if and only if it is
meager-additive.
Some consequences:
A subset of $\Cset$ is meager-additive if and only if it is
$\EE$-additive; if $f:\Cset\to\Cset$ is continuous and $X$ is meager-additive,
then so is $f(X)$.
\end{abstract}
\maketitle
\section{Introduction}
\label{sec:intro}

99 years ago \'Emile Borel~\cite{MR1504785} conceived the notion of
\emph{strong measure zero}: by his definition, a metric space $X$ has
\emph{strong measure zero} (thereinafter \smz) if for any sequence $\seq{\eps_n}$
of positive numbers there is a cover $\{U_n\}$ of $X$
such that $\diam U_n\leq\eps_n$ for all $n$.
In the same paper, Borel conjectured that every \smz{} set of reals was countable.
This statement known as \emph{Borel Conjecture} attracted
a lot of attention.

\subsection*{Borel Conjecture}

It is well-known that Borel Conjecture is independent of ZFC, the usual axioms
of set theory.
The proof of consistency of its failure was settled by 1948
by Sierpi\'nski~\cite{SIERPINSKI}, who proved in 1928 that the Continuum Hypothesis yields
a counterexample, namely the Luzin set, and G\"odel~\cite{MR0002514},
who proved in 1948 the consistency of the Continuum Hypothesis.

The consistency of the Borel Conjecture remained open until 1976
when Laver proved in his ground-breaking paper~\cite{MR0422027}
Borel Conjecture to be indeed consistent with ZFC.

To complete the picture, Carlson~\cite{MR1139474} proved in 1993 that the Borel Conjecture
implies that every separable \smz{} metric space is countable.

Over time numerous characterizations of strong measure zero were discovered.
We are going to focus on (besides the definition itself) three such characterizations.

\subsection*{Hausdorff dimension}

One way to characterize \smz{} is via Hausdorff dimension:
It is almost obvious that a \smz{} space has Hausdorff
dimension zero. Since \smz{} is preserved by uniformly continuous
mappings, it follows that any uniformly continuous image of a \smz{}
space has Hausdorff dimension zero. It is not difficult to prove that
the latter property actually characterizes \smz{}.
The essence of this characterization can be traced back to Besicovitch
papers~\cite{MR1555386,MR1555389}.

\subsection*{Galvin's Game}

A characterization in terms of infinite games was recently published by
Galvin~\cite{MR3696064} who attributes it to Galvin, Mycielski and Solovay.
Consider the following game:
the playground is a subset $X$ of a \si compact metric
space. At the $n$-th inning, Player I chooses $\eps_n>0$
and Player II responds with a set $U_n\subs X$ such that
$\diam U_n\leq\eps_n$.
Player II wins if the sets $U_n$
form a cover of $X$, otherwise Player I wins. Denote this game $G(X)$.

\begin{thm}[{\cite{MR3696064}}]\label{Galvin}
Let $X$ be a subset of a \si compact metric space.
The set is \smz{} if and only if Player I does not have a winning strategy
in the game $G(X)$.
\end{thm}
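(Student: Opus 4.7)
My plan is to handle each implication separately, concentrating the effort on the forward direction.

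The reverse implication ``Player I has no winning strategy implies $X$ is \smz{}'' I would argue via contrapositive: if $X$ is not \smz{}, some sequence $\seqeps$ witnesses the failure, and Player I's non-adaptive strategy of playing this fixed sequence regardless of Player II's responses is winning, since no legal response can cover $X$.

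For the forward direction, I am given a \smz{} set $X$ and an arbitrary strategy $\sigma$ of Player I, and must construct a legal play $\seq{U_n}$ for Player II whose sets cover $X$. My first move is to exploit \si compactness of the ambient space: write $X\subs\bigcup_k K_k$ with the $K_k$ compact and increasing, so that each $X_k:=X\cap K_k$ is a \smz{} subset of a compact set. Next, partition $\Nset$ into infinitely many infinite blocks $B_1,B_2,\ldots$ and try to arrange that the moves with indices in $B_k$ collectively cover $X_k$. Since \smz{} is preserved under countable unions, success in each block would cover all of $X$.

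The heart of the proof, and the main obstacle, is that $\sigma$ is adaptive: within a block $B_k$ the diameter $\eps_n$ depends on all earlier moves of Player II, so one cannot simply hand a prescribed sequence $\seqeps$ to the \smz{} property of $X_k$ and read off a cover. My plan is to play sequentially and view the play inside each block as a subgame; this reduces the theorem to the compact case, namely that \emph{for a compact \smz{} set $K$ and any strategy $\tau$ of Player I, Player II can win $G(K)$}. For this substep I would use total boundedness of $K$ to enumerate at each level a finite family of balls that cover $K$ at the prescribed scale, building a finitely branching tree of partial plays; a K\"onig-style compactness argument combined with the \smz{} property of $K$ applied to a diameter sequence read along an appropriate branch should produce a branch that realizes a valid covering play. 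The most delicate step I anticipate is the coordination between the adaptively revealed diameters along a chosen branch and the \smz{} condition, for which one may need a greedy ``cover as much as possible at each level'' construction rather than an arbitrary enumeration of balls.
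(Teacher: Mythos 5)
Your reverse implication is fine and is the standard one. The forward direction, however, is missing the mechanism that makes the argument work. The proof (Galvin's, reproduced in this paper in the \ssmz{} analogue, Theorem~\ref{Galvins}) runs as follows: writing $X$ as contained in an increasing union of totally bounded sets $F_n$, one builds \emph{recursively} finite $\del_n$-fine families $\mc B_n$ of subsets of $F_n$ with the absorption property that every subset of $F_n$ of diameter at most $\del_n/3$ lies inside some member of $\mc B_n$ (Galvin's Lemma~1), where --- and this is the decisive point --- $\del_n$ is defined as the minimum of $\sigma(B_0,\dots,B_{n-1})$ over \emph{all} of the finitely many histories with $B_i\in\mc B_i$, $i<n$. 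Since the families are finite this minimum is positive, so the entire sequence $\seq{\del_n/3}$ is available \emph{before} strong measure zero is invoked. One then applies \smz{} once, to this single sequence, obtains a $\seq{\del_n/3}$-fine cover $\seq{A_n}$ (arranged so that each point is covered at arbitrarily late indices), absorbs each $A_n\cap F_n$ into some $B_n\in\mc B_n$, and observes that $\diam B_n\leq\del_n\leq\sigma(B_0,\dots,B_{n-1})$, so $\seq{B_n}$ is a legal play against $\sigma$ that covers $X$.

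Your proposal reduces to the totally bounded case and then appeals to ``a finitely branching tree of partial plays, K\"onig's lemma, and \smz{} applied to a diameter sequence read along an appropriate branch.'' This is exactly where the circularity you yourself flag is fatal, and you offer no device to break it: the diameters along a branch depend on the branch, while the cover you need in order to single out a branch depends on the diameters. K\"onig's lemma cannot resolve this, because ``the sets chosen along the branch cover $X_k$'' is not a finitary (closed) condition on branches --- an infinite branch of a finitely branching infinite tree exists trivially, but nothing in your construction forces it to be a \emph{covering}, i.e.\ winning, play. The two missing ideas are precisely (a) defining the scale $\del_n$ as the minimum of the strategy's outputs over all finite histories through the finite families, which decouples the diameter sequence from the choice of branch and lets \smz{} be used once, in advance, and (b) the absorption lemma with its factor $1/3$, which converts the arbitrary small sets of the \smz{} cover into members of the prescribed finite families, hence into legal moves. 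A greedy ``cover as much as possible'' choice of balls is not a substitute for either. (A minor point: your intermediate claim about ``a compact \smz{} set $K$'' is vacuous as stated, since a compact \smz{} metric space is countable; what you actually need, and what the argument uses, is \smz{} subsets of totally bounded sets.)
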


\subsection*{Galvin--Mycielski--Solovay Theorem}

Confirming a Prikry's conjecture, Galvin, Mycielski and Solovay~\cite{GMS}
proved a rather surprising characterization of \smz{} subsets of the line:

\begin{thm}[\cite{GMS}]\label{GMS}
A set $X\subs\Rset$ is \smz{} if and only if $X+M\neq\Cset$
for each meager set $M\subs\Cset$.
\end{thm}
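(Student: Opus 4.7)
The plan is to prove the two directions separately after transporting the problem to $\Cset$ with coordinatewise $\oplus$-addition, where both \smz{} and meager additivity transfer cleanly. I rely on two structural facts: every meager set in $\Cset$ sits inside a countable increasing union $\bigcup_n F_n$ of closed nowhere dense sets, and $F \subs \Cset$ is nowhere dense iff every $s \in \CCset$ admits an extension $t \sqsupseteq s$ with $[t] \cap F = \emptyset$.

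For $(\Rightarrow)$, given a meager $M \subs \bigcup_n F_n$ as above, I would first fix a bijection $\Nset \leftrightarrow \Nset \times \Nset$ assigning to each stage $n$ a target index $m(n)$ so that every $m$ appears as a target infinitely often. Next, inductively pick levels $0 = k_0 < k_1 < \cdots$ so that every $s \in 2^{k_n}$ admits an extension $t_n(s) \in 2^{k_{n+1}}$ with $[t_n(s)] \cap F_{m(n)} = \emptyset$; this is possible since $F_{m(n)}$ is nowhere dense and $2^{k_n}$ is finite. For every $m$, apply \smz{} to $X$ along the fiber $\{n : m(n) = m\}$ with diameters $2^{-k_{n+1}}$ to produce a cover $\{U_n : m(n) = m\}$ of $X$ with $U_n \subs [u_n]$ and $\abs{u_n} = k_{n+1}$. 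Then construct $y$ block by block: given $y \rest k_n$, set $p_n := (y \rest k_n) \oplus (u_n \rest k_n)$, pick the extension $t_n(p_n)$, and let $y \rest k_{n+1}$ be the unique string with $u_n \oplus (y \rest k_{n+1}) = t_n(p_n)$. For any $x \in U_n$ this yields $(x + y) \rest k_{n+1} = t_n(p_n)$, so $x + y \notin F_{m(n)}$. Since for every $m$ and every $x \in X$ there is some $n$ with $m(n) = m$ and $x \in U_n$, we conclude $x + y \notin F_m$ for every $m$, hence $y \notin X + M$.

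For $(\Leftarrow)$, given $\Seqeps$ witnessing non-\smz{} of $X$, I would construct a meager $F_\sigma$ set $M$ with $X + M = \Cset$ by a diagonal construction driven by $\seqeps$. The natural framework is the Bartoszy\'nski-style combinatorial description of meagerness in $\Cset$, which reduces the construction to choosing a partition $\seq{I_n}$ of $\Nset$ into intervals adapted to $\seqeps$ and a suitable pattern $\seq{\sigma_n}$; the pattern is built so that if some $y \notin X + M$, then every $x \in X$ is trapped in small cylinders forced by $y$ and $\sigma$, and these cylinders together would yield a cover of $X$ with a forbidden diameter profile, contradicting the choice of $\seqeps$.

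The main obstacle is the bookkeeping in $(\Rightarrow)$: the interleaved $\omega$-fold application of \smz{} combined with the block-by-block construction of $y$ must produce a coherent element of $\Cset$ and yield avoidance of $F_m$ for every $(x, m) \in X \times \Nset$. In $(\Leftarrow)$, the delicate step is coordinating the partition $\seq{I_n}$ and the pattern $\seq{\sigma_n}$ so that the combinatorial trapping genuinely produces a cover with diameters $\leq \eps_n$.
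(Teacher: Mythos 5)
The paper offers no proof of Theorem~\ref{GMS}: it is quoted from the Galvin--Mycielski--Solovay abstract \cite{GMS} as background, so there is no in-paper argument to compare yours with; I can only assess the proposal on its own terms. Your $(\Rightarrow)$ direction is essentially the classical GMS argument and its bookkeeping does close up: the levels $k_n$ depend only on the sets $F_{m(n)}$ (one extends each of the finitely many $s\in2^{k_n}$ into the complement of a closed nowhere dense set), the fiberwise application of \smz{} guarantees that for every $x\in X$ and every $m$ there is $n$ with $m(n)=m$ and $x\in U_n$, and the rule $y\rest k_{n+1}=u_n\oplus t_n(p_n)$ is consistent with $y\rest k_n$ and forces $(x\oplus y)\rest k_{n+1}=t_n(p_n)$ for $x\in U_n$, so $x\oplus y\notin F_m$ for all $m$ and hence $y\notin X+M$. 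One caveat: the theorem is stated for $X\subs\Rset$, and your opening claim that everything ``transfers cleanly'' to $(\Cset,\oplus)$ is not free, since the content of the statement is precisely the interaction of the group operation with category; transfers of this kind between $(\Rset,+)$ and $(\Cset,\oplus)$ are themselves theorems (compare Section~\ref{sec:rem}, where the analogous transfer for $\MM$-additivity is Weiss's result). Either justify the transfer, or run the same cylinder argument in the circle $\Rset/\mathbb{Z}$ with arcs in place of cylinders (compactness gives the uniform gap lengths needed at each stage) and lift to $\Rset$.

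The genuine gap is in $(\Leftarrow)$. As written you construct nothing: you name a Bartoszy\'nski-style framework, an unspecified partition $\seq{I_n}$ and ``pattern'' $\seq{\sigma_n}$, and defer the whole content to a coordination step you yourself flag as delicate. In fact this is the easy direction and needs no machinery. Given $\Seqeps$, assume $\eps_n=2^{-k_n}$, enumerate $\CCset=\{s_n\}$, choose $t_n\sups s_n$ with $\abs{t_n}\geq k_n$, and put $M=\Cset\setminus\bigcup_n\cyl{t_n}$, a closed nowhere dense set. If $y\notin X+M$, then every $x\in X$ satisfies $x\oplus y\in\cyl{t_n}$ for some $n$, i.e.\ $x\in\cyl{(y\rest\abs{t_n})\oplus t_n}$, a cylinder of diameter at most $\eps_n$; these cylinders form an $\seqeps$-fine cover of $X$. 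Contrapositively, a sequence witnessing that $X$ is not \smz{} produces a single closed nowhere dense $M$ with $X+M=\Cset$. (The $\Rset$ analogue: take the complement of a union of intervals $J_n$ of length $\eps_n$ placed inside an enumeration of rational intervals; any $t\notin X+M$ gives the cover $\{t-J_n\}$.) Your proposed route could presumably be completed, but as it stands it is a plan rather than a proof, and it substantially overcomplicates the direction that is actually trivial.
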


We will refer to this result as a \emph{Galvin--Mycielski--Solovay Theorem}.
Recently Kysiak \cite{kysiak} and Fremlin \cite{fremlin5} showed that an
analogous theorem holds for all \si compact metrizable groups.
The theorem was further investigated by
Hru\v s\'ak, Wohofsky and Zindulka~\cite{MR3453581}
and Hru\v s\'ak and Zapletal~\cite{MR3707641} who found, roughly speaking, that under the
Continuum Hypothesis Galvin--Mycielski--Solovay Theorem does not extend
beyond \si compact metrizable groups.

\medskip
In summary, we thus have four strikingly different descriptions of \smz{}:
\begin{itemize}
\item ``combinatorial'' --- the Borel's definition,
\item ``fractal'' --- by Hausdorff dimension of images,
\item ``game-theoretic'' --- by the Galvin's game
(restriction: subsets of \si compact spaces),
\item ``algebraic'' --- by the Galvin--Mycielski--Solovay Theorem (restriction:
subsets of \si compact metrizable groups).
\end{itemize}

\subsection*{Sharp measure zero}

Consider the characterization of \smz{} by Hausdorff dimension:
$X$ is \smz{} if and only if $\hdim f(X)=0$ for every uniformly continuous mapping.
One may, just out of curiosity, ask what happens when the Hausdorff dimension
is replaced with some other fractal dimension. Here we will consider the so called
\emph{upper Hausdorff dimension} $\uhdim$ introduced in~\cite{MR2957686}.
We will say a metric space has \emph{sharp measure zero} (thereinafter \ssmz)
if $\uhdim f(X)=0$ for every uniformly continuous mapping.

It turns out that \ssmz{} sets can be characterized by a property very
much like the Borel's definition of \smz{} and that properties of \ssmz{} sets
nicely parallel those of \smz{} sets.
In particular, \ssmz{} is characterized by a slight modification of the
Galvin's game.

One of the highlights of this section is the following improvement of
a theorem of Scheepers~\cite[Theorem 1]{MR1779763}):
a product of a \smz{} set and \ssmz{} set is \smz{}.

\subsection*{Meager-additive sets}

The Cantor set $\Cset$ with the coordinatewise addition
is a second countable compact topological group.
Provide $\Cset$ with the usual least difference metric.

Consider the following strengthening of the algebraic property of the
Galvin--Mycielski--Solovay Theorem:
say that a set $X\subs\Cset$ is \emph{meager-additive} if
$X+M$ is meager for every meager set $M\subs\Cset$. The notion generalizes
to other topological groups, and in particular to finite cartesian powers
of $\Cset$ and $\Rset$, in an obvious way.

Meager-additive sets in $\Cset$ have received a lot of attention.
They were investigated by many, most notably by
Bartoszy\'nski and Judah~\cite{MR1350295}, Pawlikowski~\cite{MR1380640}
and Shelah~\cite{MR1324470}.
Combinatorial properties of meager-additive sets described by
Pawlikowski~\cite{MR1380640} and Shelah~\cite{MR1324470}
allow to prove a rather surprising theorem that is one of
summits of the present paper.

\begin{thm}\label{thm:ex1}
A set $X\subs\Cset$ is \ssmz{} if and only if it is meager-additive.
\end{thm}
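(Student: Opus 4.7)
The plan is to prove both implications via an intermediate combinatorial description. Two ingredients are needed: first, a Borel-style characterization of \ssmz{}, which the paper's earlier development of sharp measure zero (announced as paralleling Borel's definition) presumably established --- namely that $X\subs\Cset$ is \ssmz{} iff for every $\Seqeps$ there is a cover $\{U_n\}$ of $X$ with $\diam U_n\leq\eps_n$ satisfying a block-uniformity condition: an increasing sequence $\seq{k_j}$ can be chosen so that every $x\in X$ falls inside $U_n$ for some $n\in[k_j,k_{j+1})$ for all but finitely many $j$. Second, the Pawlikowski--Shelah combinatorial characterization of meager-additivity (\cite{MR1380640,MR1324470}): $X\subs\Cset$ is meager-additive iff for every partition $\seq{I_n}$ of $\Nset$ into finite intervals there exist $y\in\Cset$ and an increasing sequence $\seq{k_j}$ such that every $x\in X$ satisfies $x\rest I_n=y\rest I_n$ for some $n\in[k_j,k_{j+1})$ for all but finitely many $j$. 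The two descriptions are, up to translation, the same.

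For the implication meager-additive $\Rightarrow$ \ssmz{}, fix a uniformly continuous $f:\Cset\to\Cset$ and $\Seqeps$. By uniform continuity, choose a partition $\seq{I_n}$ of $\Nset$ so fine that any two points of $\Cset$ agreeing on some $I_n$ are mapped by $f$ into a common set of diameter at most $\eps_n$. Apply meager-additivity of $X$ to this partition to obtain $y$ and $\seq{k_j}$. Then $\bigl\{f\bigl(\cyl{y\rest I_n}\bigr):n\in[k_j,k_{j+1}),\ j\in\Nset\bigr\}$, together with a cover of finitely many exceptional points, is a cover of $f(X)$ meeting the \ssmz{} block-uniformity condition. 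As $\Seqeps$ and $f$ are arbitrary, $\uhdim f(X)=0$, so $X$ is \ssmz{}.

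For the converse, let $X$ be \ssmz{} and $M\subs\Cset$ meager. Using the standard description of meager sets in $\Cset$, enlarge $M$ to $\{z:z\rest I_n\neq s_n\text{ for infinitely many }n\}$ for some partition $\seq{I_n}$ and targets $s_n\in 2^{I_n}$. Apply the Borel-style characterization of \ssmz{} (to $\id:\Cset\to\Cset$) with $\eps_n$ chosen to force agreement on $I_n$: this yields a cover of $X$ by cylinders with block-uniformity controlled by some $\seq{k_j}$, i.e.\ a function $y\in\Cset$ such that each $x\in X$ satisfies $x\rest I_n=y\rest I_n$ for some $n\in[k_j,k_{j+1})$, for all but finitely many $j$. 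Setting $J_j=\bigcup_{n\in[k_j,k_{j+1})}I_n$ and defining targets $t_j\in 2^{J_j}$ by $t_j\rest I_n=y\rest I_n+s_n$, one checks that every $z=x+m\in X+M$ must, for infinitely many $j$, satisfy $z\rest J_j\neq t_j$ (because $m$ misses $s_n$ on infinitely many $n$, while $x$ matches $y$ on one $I_n$ in each late block). Hence $X+M$ is meager.

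The main obstacle is the quantifier alignment. The classical Galvin--Mycielski--Solovay theorem pairs \smz{} (each $x$ is covered for \emph{some} late index) with meager-avoidance (the sum omits a single point). The sharp version pairs \ssmz{} (covers with block-uniformity --- a hit in \emph{every} late block) with meager-additivity (sums fail a prescription infinitely often). Carrying the block structure through a uniformly continuous $f$ in one direction, and synchronizing the blocks witnessing \ssmz{} of $X$ with those witnessing meagerness of $M$ in the other, is where the combinatorics must be executed with care; the rest is essentially the usual GMS-style bookkeeping.
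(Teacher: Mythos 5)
Your strategy --- running both directions through Shelah's combinatorial characterization (Lemma~\ref{ShelahM}) together with the $\gamma$-groupable-cover description of \ssmz{} (Theorem~\ref{combUhnull}) --- is workable, and in the direction ``meager-additive \Implies{} \ssmz{}'' it is essentially the paper's Proposition~\ref{MeShelah}; but your execution of that direction contains a false step. No partition $\seq{I_n}$ can guarantee that two points agreeing merely on some $I_n$ have $f$-images $\eps_n$-close: agreement on $I_n$ constrains nothing before $I_n$, so the set $\cyl{y\rest I_n}=\{x\in\Cset:x\rest I_n=y\rest I_n\}$ has diameter $1$ whenever $0\notin I_n$, and $f(\cyl{y\rest I_n})$ need not be small. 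The actual content of this direction, which your sketch skips, is to split each such set into the $2^{\min I_n}$ genuinely small cylinders $\cyl{p\concat y\rest I_n}$, $p\in2^{\min I_n}$, and to control how many there are; the paper does this by fixing a gauge $h$ and choosing the partition so that $2^{f(k)}\,h\bigl(2^{-f(k+1)}\bigr)\leq2^{-k}$, which yields $\uhm^h(X)=0$ for every gauge and hence \ssmz{} by Theorem~\ref{basicUhnull}(ii) --- no uniformly continuous $f$ (which in the definition maps into an arbitrary complete space, not just $\Cset$) needs to appear at all.

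In the converse direction your quantifiers for meagerness are wrong, and the error is not cosmetic: $\{z:z\rest I_n\neq s_n\text{ for infinitely many }n\}$ is comeager (its complement, the set of $z$ matching $s_n$ cofinitely, is the meager one), and likewise ``$z\rest J_j\neq t_j$ for infinitely many $j$'' defines a comeager set, so what you prove does not make $X+M$ meager. The repair is standard: use $M\subs\{z:\fmany n\ z\rest I_n\neq s_n\}$; then the finitely many $n$ with $m\rest I_n=s_n$ meet only finitely many blocks, so for all but finitely many $j$ there is $n\in[k_j,k_{j+1})$ with $x\rest I_n=y\rest I_n$ and $m\rest I_n\neq s_n$, whence $z\rest J_j\neq t_j$ for all but finitely many $j$ --- and that set is meager. (Your extraction of a single $y$ from an $\seqeps$-fine $\gamma$-groupable cover with $\eps_n\leq2^{-(\max I_n+1)}$ is fine, since the $n$-th cover set determines $y$ on $I_n$; alternatively, having cited the characterization as an equivalence, you could simply invoke its reverse direction.) Once repaired, this direction is a genuinely different and shorter route than the paper's, which instead passes through $\EE$-additivity, Pawlikowski's theorem and a Galvin--Mycielski--Solovay-style Baire-category argument (Theorem~\ref{mainME1}, Proposition~\ref{EisEsharp}), and in exchange obtains the sharply additive and $\EE$-additive equivalences as well.
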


In summary, we thus have four descriptions of \ssmz{} that perfectly parallel
those of \smz{}:
\begin{itemize}
\item ``combinatorial'' --- a Borel-like definition,
cf.~Theorem~\ref{combUhnull},
\item ``fractal'' --- by upper Hausdorff measures,
cf.~Theorem \ref{basicUhnull},
\item ``game-theoretic'' --- by a Galvin-like game, this time without any restriction,
cf.~Theorem \ref{Galvins},
\item ``algebraic'' ---  by meager-additive sets (restriction:
subsets of $\Cset$ or Euclidean spaces and their finite powers),
cf.~\ref{cpowers} and~\ref{weiss22}.
\end{itemize}

Consequences include, for instance:
\begin{itemize}
  \item meager-additive sets are preserved by continuous mappings $f:\Cset\to\Cset$
  \item a product of a \smz{} and a meager-additive set is \smz{}
  \item meager-additive sets are universally meager (cf.~Proposition~\ref{umg})
\end{itemize}

Besides meager-additive sets, we also consider the following notion: a set
$X\subs\Cset$ is called \emph{$\EE$-additive} if
for every $F_\sigma$-set $E\subs\Cset$ of Haar measure zero the set
$X+E$ is contained in an $F_\sigma$-set of Haar measure zero.
We prove the following:

\begin{thm}\label{thm:ex2}
A set $X\subs\Cset$ is meager-additive if and only if it is $\EE$-additive.
\end{thm}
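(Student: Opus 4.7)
The plan is to link $\EE$-additivity to \ssmz{} via combinatorial characterizations, then invoke Theorem~\ref{thm:ex1} to conclude. The two ingredients are: the standard Borel--Cantelli description of $\EE$---a set $E$ belongs to $\EE$ iff $E\subseteq\{x\in\Cset:\emany n,\ x\rest I_n\in S_n\}$ for some interval partition $\seq{I_n}$ of $\Nset$ and $S_n\subseteq 2^{I_n}$ with $\sum_n|S_n|\cdot 2^{-|I_n|}<\infty$---and the Borel-like combinatorial characterization of \ssmz{} provided by Theorem~\ref{combUhnull}, which controls the sizes of the traces $X\rest I_n\subseteq 2^{I_n}$.

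For direction $(\Rightarrow)$, assume $X$ is meager-additive, hence \ssmz{} by Theorem~\ref{thm:ex1}. Given $E\in\EE$ with data $(\seq{I_n},\seq{S_n})$, one has
\[
X+E\subseteq\{z\in\Cset:\emany n,\ z\rest I_n\in(X\rest I_n)+S_n\}.
\]
Applying Theorem~\ref{combUhnull} with a diameter sequence $\seqeps$ calibrated to $\seq{|I_n|,|S_n|}$ bounds the traces $X\rest I_n$ tightly enough to give $\sum_n|(X\rest I_n)+S_n|\cdot 2^{-|I_n|}<\infty$; this places $X+E$ in an $F_\sigma$ null set, upgrading the $G_\delta$ null cover from Borel--Cantelli via the $\sigma$-compactness of $X$ (also a consequence of \ssmz{}).

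For direction $(\Leftarrow)$, assume $X$ is $\EE$-additive; taking $E=\{0\}$ shows $X\in\EE$, hence $X$ is $F_\sigma$-null. To verify \ssmz{} via Theorem~\ref{combUhnull}, for a given $\seqeps$ encode it as an interval partition $\seq{I_n}$ with $2^{-|I_n|}\sim\eps_n$ and a suitable $S_n\subseteq 2^{I_n}$, producing an $E\in\EE$ tailored to $\seqeps$. The $F_\sigma$ null cover of $X+E$ guaranteed by $\EE$-additivity then yields, upon projection onto the appropriate coordinates, a cover of $X$ whose basic pieces have diameters controlled by $\seqeps$, verifying the combinatorial characterization of \ssmz{}. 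Theorem~\ref{thm:ex1} then concludes meager-additivity.

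\textbf{Main obstacle.} The crux is the calibration of $\seqeps$ against the combinatorial data $(\seq{I_n},\seq{S_n})$ in both directions so that the Borel--Cantelli summability aligns with the \ssmz{} cover condition. In direction $(\Rightarrow)$, one must upgrade the $G_\delta$-null cover of $X+E$ from Borel--Cantelli to an $F_\sigma$-null cover, which requires additional structure on $X$ (here, $\sigma$-compactness inherited from \ssmz{}). In direction $(\Leftarrow)$, a single $\seqeps$ may not yield sufficient information; a diagonal argument across a countable family of partitions is likely needed to extract the uniform smallness of traces required for \ssmz{}.
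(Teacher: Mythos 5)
Your reduction via Theorem~\ref{thm:ex1} to the equivalence ``\ssmz{} $\Leftrightarrow$ $\EE$-additive'' is legitimate, but neither implication is actually established by your sketch. In the forward direction, Theorem~\ref{combUhnull} gives $\seqeps$-fine $\gamma$-groupable covers; it does \emph{not} bound the traces $X\rest I_n$, and no calibration can: a countable dense $X\subs\Cset$ is \ssmz{}, yet $X\rest I_n=2^{I_n}$ for every interval $I_n$, so $\sum_n\abs{(X\rest I_n)+S_n}2^{-\abs{I_n}}$ diverges and your superset $\{z:\emany n\ z\rest I_n\in(X\rest I_n)+S_n\}$ is all of $\Cset$. (The containment is correct but loses the correlation between the point $x$ and the witnessing $n$'s, which is exactly what the $\gamma$-groupable structure encodes.) Moreover, even when such a sum converges, the limsup set is a comeager $G_\delta$ and is never in $\EE$ (members of $\EE$ are meager), and your proposed repair fails because \ssmz{} does \emph{not} imply $X$ is \si compact --- it only gives containment in \si compact sets of Hausdorff dimension zero. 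The paper's argument for this direction is short and bypasses traces entirely: \ssmz{} gives $X\times E\in\NNs(\uhm_0^1)$ (Theorem~\ref{prodUhnull}), addition is Lipschitz, so $X+E\in\NNs(\uhm_0^1)=\EE$ by Lemma~\ref{ulipschitz}; the $F_\sigma$ structure comes from $\NNs(\uhm_0^1)$, not from any \si compactness of $X$.

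In the backward direction the sentence ``the $F_\sigma$ null cover of $X+E$ yields, upon projection, a cover of $X$ with diameters controlled by $\seqeps$'' is precisely the theorem's content and is given no mechanism: an arbitrary $F_\sigma$ null superset of $X+E$ carries no coordinate structure to project, and you do not exhibit a choice of $E$ (nor a diagonal over countably many) that forces one. In the paper this direction requires real machinery: the Bartoszy\'nski--Shelah base $S(f,F)$ of $\EE$ together with Lemmas~\ref{2.6.3} and~\ref{Einc3} to show $\EE$-additive $\Rightarrow$ sharply $\EE$-additive (the key point being that $\{x:x+S(f,F)\subs S(f{\circ}g,G)\}$ is $F_\sigma$, Proposition~\ref{EisEsharp}); then Pawlikowski's theorem to transfer from $\EE$ to $\MM$ (yielding a \si compact $F\sups X$ with $F+M\neq\Cset$ for each meager $M$); a density/Baire-category argument to upgrade this to $F+M$ meager; and finally Shelah's combinatorial characterization of $\MM$-additive sets (Lemma~\ref{ShelahM}, used in Proposition~\ref{MeShelah}) to conclude \ssmz{}, which is where the gauge is calibrated against the partition. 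Note also that in the paper Theorems~\ref{thm:ex1} and~\ref{thm:ex2} are proved simultaneously by one cycle of implications through $\EE$-additivity, so invoking Theorem~\ref{thm:ex1} does not spare you this work; as it stands, both halves of your argument have genuine gaps.
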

This theorem answers a question of Nowik and Weiss~\cite{MR1905154}.

\medskip

\smallskip

Some common notation used throughout the paper includes $\abs{A}$ for
the cardinality of a set $A$, $\Nset$ for the set of natural numbers,
$[\Nset]^\Nset$ for the collection of infinite
subsets of $\Nset$,
$\Pset$ for the family of all sequences of natural numbers,
and $\UPset$ for the family of nondecreasing
unbounded sequences of natural numbers.

\section{Strong measure zero via Hausdorff measure}
\label{sec:smz}

In this section we establish a few characterizations of strong measure zero
in terms of Hausdorff measures and dimensions based on a classical
Besicovitch result~\cite{MR1555386,MR1555389} and derive some consequences.

\subsection*{Hausdorff measure}

Before getting any further we need to review Hausdorff measure
and dimension. We set up the necessary definitions
and recall relevant facts.

Let $X$ be a space. If $A\subs X$, then $\diam A$ denotes the diameter of $A$.
A closed ball of radius $r$ centered at $x$ is denoted by $B(x,r)$.

A non-decreasing, right-continuous function $h:[0,\infty)\to[0,\infty)$
such that $h(0)=0$ and $h(r)>0$ if $r>0$ is called a \emph{gauge}.
The following is the common ordering of gauges, cf.~\cite{MR0281862}:
$$
  g\prec h\quad \overset{\mathrm{def}}{\equiv}
  \quad\lim_{r\to0+}\frac{h(r)}{g(r)}=0.
$$
In the case when $h(r)=r^s$ for some $s>0$, we write $g\prec s$ instead of
$g\prec h$.

Notice that for any sequence $\seq{h_n}$ of gauges there is
a gauge $h$ such that $h\prec h_n$ for all $n$.

If $\del>0$, a cover $\mc A$ of a set $E\subs X$ is termed a
\emph{$\del$-fine cover} if $\diam A\leq\del$ for all $A\in\mc A$.
If $h$ is a gauge,
the \emph{$h$-dimensional Hausdorff measure} $\hm^h(E)$ of
a set $E$ in a space $X$ is defined thus:
For each $\del>0$ set
$$
  \hm^h_\delta(E)=
  \inf\left\{\sum_{n\in\Nset}h(\diam E_n):
  \text{$\{E_n\}$ is a countable $\delta$-fine cover of $E$}\right\}
$$
and put
$
  \hm^h(E)=\sup_{\delta>0}\hm^h_\delta(E).
$

In the common case when $h(r)=r^s$ for some $s>0$, we write $\hm^s$ for
$\uhm^h$, and the same licence is used for other measures and set functions
arising from gauges.

Properties of Hausdorff measures are well-known. The following, including
the two propositions, can be found e.g.~in~\cite{MR0281862}.
The restriction of $\hm^h$ to Borel sets is a $G_\del$-regular Borel measure.
Recall that a sequence of sets $\seq{E_n:n\in\Nset}$ is termed a \emph{$\lambda$-cover}
of $E\subs X$ if every point of $E$ is contained in infinitely many $E_n$'s.
\begin{lem}\label{lambda}
$\hm^h(E)=0$ if and only if $E$ admits a countable
$\lambda$-cover $\seq{E_n}$ such that $\sum_{n\in\Nset}h(dE_n)<\infty$.
\end{lem}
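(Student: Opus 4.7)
\smallskip

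\noindent\textbf{Plan.} The lemma is a standard Borel--Cantelli-style equivalence for Hausdorff measures. I would prove both implications by straightforward bookkeeping, the only subtlety being the use of the fact that a gauge is strictly positive on $(0,\infty)$.

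For the ``only if'' direction, suppose $\hm^h(E)=0$. Then $\hm^h_{1/k}(E)=0$ for every $k\in\Nset$, so for each $k$ one can choose a $(1/k)$-fine countable cover $\{E^k_n:n\in\Nset\}$ of $E$ with $\sum_{n}h(\diam E^k_n)<2^{-k}$. Concatenate these covers into a single sequence $\seq{E_n}$. Since every point of $E$ belongs to some member of each of the countably many covers, it lies in infinitely many $E_n$, so $\seq{E_n}$ is a $\lambda$-cover of $E$. The total sum is $\sum_{k}\sum_{n}h(\diam E^k_n)<\sum_{k}2^{-k}<\infty$, as required.

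For the ``if'' direction, suppose $\seq{E_n}$ is a $\lambda$-cover of $E$ with $\sum_{n}h(\diam E_n)<\infty$. The key preliminary observation is that $\diam E_n\to0$: convergence of the series forces $h(\diam E_n)\to0$, and if $\diam E_n\not\to0$ one could pass to a subsequence with $\diam E_n\geq c>0$, yielding $h(\diam E_n)\geq h(c)>0$ by monotonicity and positivity of the gauge, a contradiction. Now fix $\delta>0$ and $\eps>0$ and choose $N$ so large that both $\diam E_n<\delta$ for all $n\geq N$ and $\sum_{n\geq N}h(\diam E_n)<\eps$. Since $\seq{E_n}$ is a $\lambda$-cover, every $x\in E$ is covered by some $E_n$ with $n\geq N$, so $\{E_n:n\geq N\}$ is a $\delta$-fine cover of $E$ and
$$
\hm^h_\delta(E)\leq\sum_{n\geq N}h(\diam E_n)<\eps.
$$
As $\delta$ and $\eps$ are arbitrary, $\hm^h(E)=\sup_{\delta>0}\hm^h_\delta(E)=0$.

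The main (minor) obstacle is the observation that $\diam E_n\to0$, which is not purely formal: it uses the assumption that $h(r)>0$ for every $r>0$ together with monotonicity of the gauge. Everything else is routine reorganization of a countable family of covers.
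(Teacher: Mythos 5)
Your proof is correct; the paper itself offers no proof of this lemma (it simply refers to Rogers's \emph{Hausdorff measures}), and your argument is exactly the standard one: concatenating $(1/k)$-fine covers with sums below $2^{-k}$ for the forward direction, and the tail-sum estimate together with the observation that $\diam E_n\to0$ (using $h(r)>0$ for $r>0$) for the converse.
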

\begin{lem}\label{lemHaus}
\begin{enum}
\item If $\hm^h(X)<\infty$ and $h\prec g$, then $\hm^g(X)=0$.
\item If $\hm^h(X)=0$, then there is $g\prec h$ such that $\hm^g(X)=0$.
\end{enum}
\end{lem}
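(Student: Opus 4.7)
For part (i), my plan is to unfold the definition of $\prec$ directly. From $h \prec g$ one has $g(r)/h(r) \to 0$ as $r \to 0^+$, so for each $\eps > 0$ there is $\del_0 > 0$ with $g(r) \leq \eps\,h(r)$ on $(0, \del_0]$. Given $\del \leq \del_0$, pick a $\del$-fine cover $\{E_n\}$ of $X$ with $\sum h(\diam E_n) \leq \hm^h(X) + 1$, possible because $\hm^h_\del(X) \leq \hm^h(X) < \infty$. Then $\sum g(\diam E_n) \leq \eps(\hm^h(X) + 1)$, hence $\hm^g_\del(X) \leq \eps(\hm^h(X) + 1)$. Sending $\del \to 0^+$ and then $\eps \to 0^+$ gives $\hm^g(X) = 0$.

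Part (ii) is subtler, and I would route it through Lemma~\ref{lambda}. The lemma turns the hypothesis $\hm^h(X) = 0$ into a $\lambda$-cover $\{E_n\}$ of $X$ with $\sum h(\diam E_n) < \infty$. Setting $d_n = \diam E_n$ and $a_n = h(d_n)$, and after discarding singletons and reindexing so $d_1 \geq d_2 \geq \dots$, one has $a_n \downarrow 0$ and $d_n \to 0$. The task becomes constructing a gauge $g$ satisfying $\lim_{r \to 0^+} h(r)/g(r) = 0$ and $\sum g(d_n) < \infty$; Lemma~\ref{lambda} applied in the reverse direction then yields $\hm^g(X) = 0$. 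The analytic tool is the classical Abel--Dini type fact that any non-increasing summable $\sum a_n$ admits multipliers $\phi_n \nearrow \infty$ with $\sum \phi_n a_n < \infty$; the canonical choice $\phi_n = 1/\sqrt{T_n}$ with $T_n = \sum_{k \geq n} a_k$ works via the telescoping inequality $a_n/\sqrt{T_n} \leq 2(\sqrt{T_n} - \sqrt{T_{n+1}})$, yielding $\sum \phi_n a_n \leq 2\sqrt{T_1}$. Then I would define $\phi$ as the right-continuous step function $\phi(r) = \phi_n$ on $[d_{n+1}, d_n)$, and set $g(r) = \sup\{\phi(s) h(s) : 0 < s \leq r\}$. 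By construction $g$ is non-decreasing and right-continuous, vanishes at $0$ (since $\phi(s) h(s) \leq \phi_n a_n \to 0$ for $s \in (0, d_n]$), and satisfies $g(r)/h(r) \geq \phi(r) \to \infty$, i.e., $g \prec h$.

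The main obstacle lies in securing $\sum g(d_n) < \infty$. Direct inspection shows $g(d_n) = \sup_{m \geq n} \phi_m a_m$, which collapses to the desirable $\phi_n a_n$ only when $\phi_n a_n$ is monotonically non-increasing. Since the raw choice $\phi_n = 1/\sqrt{T_n}$ need not guarantee this, the technical step is a refined choice of multipliers: first produce $b_n$ that is non-increasing, summable, and satisfies $b_n/a_n \to \infty$ (for instance take $b_n$ to be an appropriate monotonization of $a_n \log(1/T_n)$, summable by the Riemann sum comparison $\sum (T_n - T_{n+1}) \log(1/T_n) \leq \int_0^{T_1} \log(1/t)\,dt < \infty$), and then set $\phi_n = b_n/a_n$. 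With $\phi_n a_n = b_n$ non-increasing, the equality $g(d_n) = \phi_n a_n = b_n$ holds and $\sum g(d_n) = \sum b_n < \infty$, concluding the proof.
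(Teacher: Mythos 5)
Part (i) is correct; it is the standard argument (the paper offers no proof of this lemma, referring to Rogers' monograph). For part (ii) your overall route is also the classical one -- turn $\hm^h(X)=0$ into a $\lambda$-cover with $\sum h(d_n)<\infty$ via Lemma~\ref{lambda}, multiply $h$ by a slowly exploding factor, and feed the same cover back into Lemma~\ref{lambda} -- but two steps, including the one you yourself flag as the crux, are genuinely gappy. First, the ``direct inspection'' identity $g(d_n)=\sup_{m\ge n}\phi_m a_m$ is not right with your convention $\phi=\phi_n$ on $[d_{n+1},d_n)$: the point $s=d_n$ itself lies in the interval governed by an earlier index $k<n$ (the largest $k$ with $d_k>d_n$), so $g(d_n)$ also picks up $\phi_k h(d_n)=\phi_k a_n$, which with your second choice $\phi_n=b_n/a_n$ (no longer monotone in $n$) need not be $\le b_n$; when diameters repeat, a whole block of equal $d_n$'s shares the same $k$ and the intended bound $\sum_n g(d_n)\le\sum_n b_n$ no longer follows. (This is repairable: use intervals $(d_{n+1},d_n]$, or insist in addition that $\phi_n$ be non-decreasing, and also note that the discarded zero-diameter sets must be returned to the cover -- at zero $g$-cost -- so that it remains a $\lambda$-cover of all of $X$.)

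The more serious gap is the existence of the multipliers, which is exactly the analytic heart of the lemma and which you dispatch with ``an appropriate monotonization of $a_n\log(1/T_n)$''. Neither natural monotonization works in general: the running minimum $\min_{m\le n}a_m\log(1/T_m)$ loses the property $b_n/a_n\to\infty$, while the upper envelope $b_n=\sup_{m\ge n}a_m\log(1/T_m)$ need not be summable. For the latter, take $a$ constant equal to $\beta_t$ on long blocks of length $\mu_t/\beta_t$, where $\mu_{t+1}\le e^{-1/\mu_t}$ and $\beta_t=\mu_{t+1}$: then at the last index of block $t$ one has $T\approx2\beta_t$, so the envelope on the whole block is at least about $\beta_t\log(1/\beta_t)$, and the block contributes at least about $\mu_t\log(1/\mu_{t+1})\ge1$, so the envelope's sum diverges although $\sum_t\mu_t<\infty$. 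The statement you need is nevertheless true, but it requires an actual construction, e.g.: set $n_0=1$, $n_{k+1}=\min\{n\ge2n_k:\ T_n\le T_{n_k}/4\}$, let $\psi_m=2^k$ for $m\in[n_k,n_{k+1})$ and $b_n=\sup_{m\ge n}\psi_m a_m$. Then $b$ is non-increasing, $b_n/a_n\ge2^{k(n)}\to\infty$, and $\psi$ is non-decreasing (which also cures the endpoint issue above); summability follows from $b_n\le2^{k(n)}a_n+\sum_{j>k(n)}2^ja_{n_j}$ together with $T_{n_k}\le4^{-k}T_{n_0}$ and $n_ja_{n_j}\le2T_{n_{j-1}}$ (the latter because $a_{n_j}\le T_{n_{j-1}}/(n_j-n_{j-1})$ and $n_j\ge2n_{j-1}$), giving $\sum_n b_n\le\sum_k2^kT_{n_k}+\sum_j2^jn_ja_{n_j}<\infty$. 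Without some such argument, your proof of (ii) leaves its key step unproved and, as literally suggested, would follow a route that can fail.
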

We will also need a cartesian product inequality.
Given two metric spaces $X$ and $Y$ with
respective metrics $d_X$ and $d_Y$, provide the cartesian product $X\times Y$
with the maximum metric
\begin{equation}\label{maxmetric}
  d\bigl((x_1,y_1),(x_2,y_2)\bigr)=\max(d_X(x_1,x_2),d_Y(y_1,y_2)).
\end{equation}
A gauge $h$ satisfies the \emph{doubling condition} or $h$ is \emph{doubling}
if $\varlimsup_{r\to0}\frac{h(2r)}{h(r)}<\infty$.
\begin{lem}[{\cite{MR0318427,MR1362951}}]\label{howroyd}
Let $X,Y$ be metric spaces, $g$ a gauge and $h$ a doubling gauge. Then
$\hm^h(X)\,\hm^g(Y)\leq\hm^{hg}(X\times Y)$.
\end{lem}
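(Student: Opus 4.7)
The plan is to combine a slicing (Fubini-type) argument over the product cover with a Frostman-type mass distribution principle on $X$; the doubling hypothesis on $h$ enters through the latter. Fix $\delta>0$ and a countable $\delta$-fine cover $\{E_n\}$ of $X\times Y$, set $r_n=\diam E_n$, and let $A_n=\overline{\pi_X(E_n)}$, $B_n=\pi_Y(E_n)$. Under the maximum metric \eqref{maxmetric}, $E_n\subs A_n\times B_n$, the $A_n$ are closed hence Borel, and $\diam A_n,\diam B_n\leq r_n\leq\delta$. For any fixed $x\in X$ and any $y\in Y$, some $E_n$ contains $(x,y)$, forcing $x\in A_n$ and $y\in B_n$; hence $\{B_n:x\in A_n\}$ is a $\delta$-fine cover of $Y$, which gives the pointwise inequality
$$
  \sum_{n\in\Nset}\mathbf{1}_{A_n}(x)\,g(r_n)\geq\hm^g_\del(Y)\qquad(x\in X).
$$

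A naive integration of this against $\hm^h$ runs into the trouble that $\hm^h(A_n)\leq h(\diam A_n)$ can fail in general (think of a ``thick'' $A_n$). Instead, I would invoke Howroyd's Frostman lemma for doubling gauges: given any $\alpha<\hm^h(X)$ there is a finite Radon measure $\mu$ supported on a compact subset of $X$ with $\mu(A)\leq h(\diam A)$ for every Borel $A\subs X$ and $\mu(X)\geq\alpha$. This is the \emph{only} step where the doubling hypothesis on $h$ is used. Integrating the slicing inequality against $\mu$ via Tonelli gives
$$
  \sum_{n\in\Nset}g(r_n)\,\mu(A_n)\geq\hm^g_\del(Y)\,\mu(X)\geq\alpha\,\hm^g_\del(Y),
$$
and the bound $\mu(A_n)\leq h(\diam A_n)\leq h(r_n)$ then yields $\sum_n h(r_n)g(r_n)\geq\alpha\,\hm^g_\del(Y)$.

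Taking the infimum over all $\delta$-fine covers of $X\times Y$ produces $\hm^{hg}_\del(X\times Y)\geq\alpha\,\hm^g_\del(Y)$; letting $\delta\to0$ and then $\alpha\to\hm^h(X)$ gives the desired $\hm^{hg}(X\times Y)\geq\hm^h(X)\,\hm^g(Y)$. The main obstacle is the Frostman step: it is not merely a convenience but the actual reason doubling is assumed, since without it there exist analytic spaces $X$ of positive $\hm^h$-measure admitting no such mass distribution, whence no clean product inequality. Secondary points — handling $\hm^h(X)=\infty$ (take $\alpha\to\infty$), approximating with $\hm^g_\del$ rather than $\hm^g$ until the final limit, and securing Borel measurability of the $A_n$ by passing to closures — are routine once the Frostman input is in hand. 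Note the asymmetry: doubling is required only for the gauge on the factor against which one integrates, and $g$ is left untouched.
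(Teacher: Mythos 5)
Your slicing step and the integration against a mass distribution are fine as far as they go, but the Frostman step is a genuine gap, and it is exactly where the generality of the statement is lost. The lemma is asserted for \emph{arbitrary} metric spaces $X,Y$, and the ``Frostman lemma for doubling gauges'' you invoke --- for every $\alpha<\hm^h(X)$ a finite Radon measure $\mu$ carried by a compact subset of $X$ with $\mu(A)\leq h(\diam A)$ for all Borel $A$ and $\mu(X)\geq\alpha$ --- is false in that generality. Take $X\subs[0,1]$ a Bernstein set and $h(r)=r$: then $\hm^1(X)=\lambda^*(X)=1$, yet every compact subset of $X$ is countable, and since the growth condition forces $\mu(\{x\})\leq h(0)=0$, any admissible $\mu$ is atomless; an atomless Radon measure carried by a countable compact set is zero, so no mass distribution with $\mu(X)\geq\alpha>0$ exists. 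The Frostman/Howroyd-type existence theorems you have in mind require $X$ to be compact, or an analytic subset of a complete separable metric space, and there is no inner regularity of $\hm^h$ by compact sets that would let you reduce the general case to that one (the same Bernstein set blocks it). So your argument proves the inequality only under additional descriptive-set-theoretic hypotheses on $X$, not as stated.

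The paper takes a route that avoids constructing any measure on $X$. In the Appendix it proves a Fubini-type inequality for the \emph{weighted} Hausdorff measure: from a $\delta$-fine weighted cover $\{(c_i,E_i)\}$ of $E\subs X\times Y$ one manufactures, purely combinatorially, the weighted cover $\{(c_i\,g(\diam p_Y(E_i))/\gamma,\;p_X(E_i))\}$ of (most of) $X$, which yields $\upint\hm^g(E_x)\,d\whm^h(x)\leq\hm^{gh}(E)$ with no regularity assumptions on $X$ or $Y$ (Theorem~\ref{thm:appendix}). Doubling of $h$ enters only afterwards, through Howroyd's theorem that $\whm^h=\hm^h$ for doubling gauges (Theorem~\ref{HwH}), and the choice $E=X\times Y$ gives Lemma~\ref{howroyd}. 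In other words, the identity $\whm^h=\hm^h$ plays precisely the role you wanted Frostman to play, but it is a covering-theoretic statement valid for all metric spaces. To salvage your proof you would have to either restrict the lemma to analytic (or $\sigma$-compact) $X$, or replace the Frostman input by this weighted Hausdorff measure comparison --- at which point you have essentially reproduced the paper's argument.
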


The following lemma on Lipschitz images and its counterpart for uniformly
continuous mappings are well-known, see, e.g., \cite[Theorem 29]{MR0281862}.
\begin{lem}\label{lipschitz}
Let $f:(X,d_X)\to (Y,d_Y)$ be a mapping.
\begin{enum}
\item
If $f$ is uniformly continuous and a gauge $g$ is its modulus, i.e.,
\begin{equation}\label{lip2}
  d_Y(f(x),f(y))\leq g(d_X(x,y)),\quad x,y\in X,
\end{equation}
then $\hm^h(f(X))\leq \hm^{h{\circ}g}(X)$ for any gauge $h$.
\item
If $f$ Lipschitz with Lipschitz constant $L$,
then $\hm^s(f(X))\leq L^s\hm^s(X)$ for any $s>0$.
\end{enum}
\end{lem}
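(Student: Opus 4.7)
The plan is to prove part (i) directly by a straightforward covering transfer and to deduce part (ii) as a special case. The essential point is that the modulus inequality \eqref{lip2} is exactly what is needed to convert an efficient $\delta$-fine cover of $X$ into an efficient $g(\delta)$-fine cover of $f(X)$, and the fact that $g$ is a gauge ensures $g(\delta)\to0$ as $\delta\to0$, so the sup defining Hausdorff measure behaves as desired.

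For part (i), I would start by fixing $\delta>0$ and any countable $\delta$-fine cover $\{E_n\}$ of $X$. The images $\{f(E_n)\}$ cover $f(X)$, and by \eqref{lip2} one has $\diam f(E_n)\leq g(\diam E_n)\leq g(\delta)$, so this is a $g(\delta)$-fine cover. Because $h$ is non-decreasing,
\[
  \sum_{n} h\bigl(\diam f(E_n)\bigr)\leq \sum_{n} h\bigl(g(\diam E_n)\bigr)=\sum_{n}(h\circ g)(\diam E_n).
\]
Taking the infimum over all $\delta$-fine covers yields $\hm^h_{g(\delta)}(f(X))\leq \hm^{h\circ g}_\delta(X)$. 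Since $g$ is a gauge, $g(\delta)\to0^+$ as $\delta\to0^+$, so passing to the sup in $\delta$ on both sides delivers $\hm^h(f(X))\leq\hm^{h\circ g}(X)$.

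For part (ii), I would observe that a Lipschitz map with constant $L$ satisfies \eqref{lip2} with the linear gauge $g(r)=Lr$. Applied with $h(r)=r^s$, the composition becomes $(h\circ g)(r)=L^s r^s$, so (i) immediately gives $\hm^s(f(X))\leq L^s\hm^s(X)$ after pulling the constant $L^s$ out of the infimum defining $\hm^s$.

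I do not expect any real obstacle; the only small subtlety is that one must check $g\colon[0,\infty)\to[0,\infty)$ with $g(0)=0$ and right-continuity at $0$ (properties built into the definition of a gauge) so that $g(\delta)\to0$ as $\delta\to0^+$. Beyond this, monotonicity of $h$ is the sole ingredient needed to pass from $\diam f(E_n)\leq g(\diam E_n)$ to the sum inequality, and the argument does not require $f$ to be surjective since we restrict to $f(X)$.
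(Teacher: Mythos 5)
Your proof is correct. The paper does not actually prove this lemma --- it cites it as well-known (Rogers, \emph{Hausdorff measures}, Theorem 29) --- and your argument of pushing a $\delta$-fine cover of $X$ forward to a $g(\delta)$-fine cover of $f(X)$, using monotonicity of $h$ and the fact that $g(\delta)\to0^+$ as $\delta\to0^+$ to pass to the limit in the mesh, is precisely the standard proof, with (ii) correctly obtained from (i) via the gauge $g(r)=Lr$ and homogeneity of the infimum.
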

Recall that the \emph{Hausdorff dimension} of $X$ is defined by
$$
  \hdim X=\sup\{s>0:\hm^s(X)=\infty\}=\inf\{s>0:\hm^s(X)=0\}.
$$
Properties of Hausdorff dimension are well-known. In particular,
it follows from Lemma~\ref{lipschitz}(ii) that if $f:X\to Y$ is Lipschitz, then
$\hdim f(X)\leq\hdim X$.

Our first theorem provides a couple of characterizations of \smz{} spaces in terms of
Hausdorff measures and dimensions.
%
\begin{thm}\label{basicHnull}
Let $X$ be a metric space. The following are equivalent.
\begin{enum}
\item $X$ is \smz{},
\item $\hm^h(X)=0$ for each gauge $h$,
\item $\hdim f(X)=0$ for each uniformly continuous mapping $f$ on $X$,
\item $\hdim f(X,\rho)=0$ for each uniformly equivalent metric on $X$.
\end{enum}
\end{thm}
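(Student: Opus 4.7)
The plan is a cycle (i)~$\Rightarrow$~(ii)~$\Rightarrow$~(iii)~$\Rightarrow$~(iv)~$\Rightarrow$~(i), with the last implication doing the real work.

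For (i)~$\Rightarrow$~(ii) I would fix a gauge $h$ and parameters $\delta,\eps>0$ and exploit right-continuity of $h$ at $0$ with $h(0)=0$ to select $\eps_n\in(0,\delta]$ satisfying $\sum_n h(\eps_n)<\eps$. Applying \smz{} to $\seq{\eps_n}$ produces a cover $\{U_n\}$ of $X$ with $\diam U_n\leq\eps_n$; this cover is $\delta$-fine and witnesses $\hm^h_\delta(X)<\eps$. Since $\delta,\eps$ are arbitrary, $\hm^h(X)=0$. For (ii)~$\Rightarrow$~(iii): if $f$ is uniformly continuous with gauge modulus $g$, then $r\mapsto g(r)^s$ is a gauge for every $s>0$, and Lemma~\ref{lipschitz}(i) gives $\hm^s(f(X))\leq\hm^{g^s}(X)=0$, so $\hdim f(X)=0$. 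The implication (iii)~$\Rightarrow$~(iv) is immediate, since the identity $\id\colon(X,d)\to(X,\rho)$ is uniformly continuous whenever $\rho$ is uniformly equivalent to $d$.

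For (iv)~$\Rightarrow$~(i) I would argue the contrapositive. Assume $X$ is not \smz{} and let $\seq{\eps_n}$ be a witness; by thinning, take $\eps_n$ to decrease to $0$ as rapidly as desired. A Hall's-theorem argument applied to the bipartite graph \emph{``$\diam V_j\leq\eps_n$''} shows that for every countable cover $\{V_j\}$ of $X$ there is some $k\geq 0$ for which at least $k+1$ of the sets $V_j$ satisfy $\diam V_j>\eps_k$. Consequently, any nondecreasing gauge $h$ with $(k+1)h(\eps_k)\geq 1$ for every $k$ forces $\sum_j h(\diam V_j)\geq(k+1)h(\eps_k)\geq 1$, so $\hm^h(X)\geq 1$. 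After a sufficiently aggressive thinning of $\seq{\eps_n}$, such an $h$ can additionally be chosen \emph{concave}---say by piecewise-linear interpolation between the values $h(\eps_k)=1/(k+1)$, which is automatically concave once the $\eps_k$ decrease fast enough. Concavity together with $h(0)=0$ makes $h$ subadditive, so $\rho(x,y):=h(d(x,y))$ is a metric; continuity of $h$ and of its inverse at $0$ makes $\rho$ uniformly equivalent to $d$; and the identity $\diam_\rho A=h(\diam_d A)$ yields $\hm^1(X,\rho)=\hm^h(X,d)\geq 1$, whence $\hdim(X,\rho)\geq 1$, contradicting (iv).

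The main obstacle is the final construction: reconciling the combinatorial demand $h(\eps_k)\geq 1/(k+1)$ with concavity of $h$, which is what makes $h\circ d$ an honest metric rather than merely a premetric. The freedom to thin $\seq{\eps_n}$ to an arbitrarily rapidly-decreasing subsequence---which preserves the failure of \smz{}---is what lets both requirements be met simultaneously; the rest of the implication is bookkeeping about how $\hm^h$ transports under the change of metric.
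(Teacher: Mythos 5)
Your implications (i)\Implies(ii), (ii)\Implies(iii) and (iii)\Implies(iv) are fine and essentially match the paper (which outsources (i)$\Leftrightarrow$(ii) to Besicovitch and proves (ii)\Implies(iii)\Implies(iv)\Implies(ii)), and the bookkeeping at the end of your (iv)\Implies(i) --- concave $h$, $\rho=h\circ d$, $\hm^1(X,\rho)=\hm^h(X,d)$ --- is sound and mirrors the paper's gauge-transport step in reverse. The genuine gap is the combinatorial claim you extract from Hall's theorem: it is false as stated. Take $X=\Cset$ with the least difference metric and $\eps_n=2^{-n-1}$. This sequence witnesses non-\smz{}: any $\seqeps$-fine cover refines a cover by cylinders $\cyl{p_n}$ with $p_n\in2^{n+1}$, which by compactness would admit a finite subcover of total Haar measure strictly less than $1$. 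Yet the cover of $\Cset$ by the cylinders $\cyl{1^n0}$, $n\in\Nset$ (of diameter $2^{-n-1}=\eps_n$), together with the single point $111\cdots$ has, for every $k$, exactly $k$ sets of diameter $>\eps_k$ --- never $k+1$. The Hall argument breaks exactly here: the zero-diameter member is a left vertex of infinite degree, so the infinite marriage theorem does not apply; indeed no injective assignment of this cover into the slots exists even though Hall's condition holds for every finite subfamily. Nor does ``thinning as rapidly as desired'' by itself rescue the claim: for any prescribed rapidly decreasing sequence, disjoint far-apart intervals in $\Rset$ of lengths $\eps_0,\eps_1,\dots$ plus one additional distant point form a set that is not \smz{}, for which that very sequence is a witness, and for which the analogous cover again defeats the claim.

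What does rescue your strategy is the classical Besicovitch device of splitting the witness in two: run the matching only against the even-indexed entries and keep the odd-indexed ones in reserve for the countably many zero-diameter (or leftover) sets. All positive-diameter sets then have finite degree, Hall's theorem applies, and one obtains: if $\seqeps$ witnesses non-\smz{}, then every countable cover contains, for some $k$, at least $k+1$ sets of diameter $>\eps_{2k}$; demanding $(k+1)h(\eps_{2k})\geq1$ then yields $\hm^h(X)\geq1$, and the remainder of your argument goes through. In effect you are re-proving the hard half of Besicovitch's equivalence (i)$\Leftrightarrow$(ii) inline; the paper avoids this by citing Besicovitch and supplying only the transport step (iv)\Implies(ii) via a subadditive gauge $g\prec h$. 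As written, however, the central claim of your (iv)\Implies(i) is not established.
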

\begin{proof}
%
The equivalence of (i) and (ii), is due to Besicovitch~\cite{MR1555386,MR1555389}.

(ii)\Implies(iii)
Let $s>0$ be arbitrary. Let $f:X\to Y$ be uniformly continuous and let $g$
be the modulus of $f$. Define $h(x)=(g(x))^s$. By (ii) $\hm^h(X)=0$ and thus
Lemma~\ref{lipschitz}(i) yields $\hm^s(f(X))\leq\hm^h(X)=0$.
Since this holds for all $s>0$, we have $\hdim f(X)=0$.

(iii)\Implies(iv) is trivial.

(iv)\Implies(ii)
Denote by $d$ the metric of $X$.
Let $h$ be a gauge. Choose a strictly increasing, convex (and in particular subadditive)
gauge $g$ such that $g\prec h$.
The properties of $g$ ensure that $\rho(x,y)=g(d(x,y))$
is a uniformly equivalent metric on $X$. The identity
map $\id_X:(X,\rho)\to(X,d)$ is of course uniformly continuous and its modulus
is $g^{-1}$, the inverse of $g$. Hence by Lemma~\ref{lipschitz}(i)
$\hm^h(X,d)\leq\hm^{h\circ g^{-1}}(X,\rho)$.
Since $\hm^1(X,\rho)=0$ by assumption and $h\circ g^{-1}\succ1$
by the choice of $g$, we have $\hm^{h\circ g^{-1}}(X,\rho)=0$. Thus
$\hm^h(X,d)=0$, as required.
\end{proof}

Our next goal is to characterize \smz{} by behavior of cartesian products.
We need to recall first a few facts about the Cantor set.

\subsection*{Cantor set}
The set of all countable binary sequences is denoted by $\Cset$.
The set of all finite binary sequences is denoted by $\CCset$, i.e.,
$\CCset=\bigcup_{n\in\Nset}2^n=\{f:n\to2:n\in\Nset\}$.
For $p\in\CCset$ we denote $\cyl p=\{x\in\Cset:p\subs x\}$ the cone determined
by $p$. The family of all cones forms a basis for the topology of $\Cset$
and for $T\subs\CCset$ we let $\cyl T=\bigcup_{p\in T}\cyl p$.
It is well-known that this topology is second countable and compact.
It also obtains from the so called least difference metric:
For $x\neq y\in\Cset$,
set $n(x,y)=\min\{i\in\Nset:x(i)\neq y(i)\}$ and define $d(x,y)=2^{-n(x,y)}$.

The coordinatewise addition modulo $2$ makes $\Cset$ a compact topological group.
Routine proofs show that in this metric, $\hm^1$ coincides on Borel sets with
its Haar measure, i.e., the usual product measure on $\Cset$. In particular
$\hm^1(\Cset)=1$.

We consider the important \si ideal $\EE$ generated by closed null sets, i.e.,
the ideal of all sets that are contained in an $F_\sigma$ set of Haar measure zero.
\begin{lem}\label{EC}
\begin{enum}
\item For each $I\in[\Nset]^\Nset$, the set $C_I=\{x\in\Cset:x\rest I\equiv0\}$
is in $\EE$.
\item for each $h\prec 1$ there is $I\in[\Nset]^\Nset$ such that $\hm^h(C_I)>0$.
\end{enum}
\end{lem}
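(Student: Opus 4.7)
Part (i) is immediate: $C_I=\bigcap_{i\in I}\{x\in\Cset:x(i)=0\}$ is closed, and its Haar measure equals $\prod_{i\in I}\tfrac12=0$ since $I$ is infinite. Hence $C_I$ is a closed null set and therefore belongs to $\EE$.

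For part (ii) I plan to apply the mass distribution principle. Let $J=\Nset\setminus I$ and let $\mu$ be the canonical probability measure on $C_I$: the one for which the bijection $C_I\to\{0,1\}^J$ given by $x\mapsto x\rest J$ is measure-preserving, where $\{0,1\}^J$ carries the product measure. Any $A\subs C_I$ with $\diam A\leq 2^{-m}$ has all its points agreeing on $[0,m)$, so $A$ lies in a single cylinder of rank $m$, and consequently $\mu(A)\leq 2^{-|J\cap[0,m)|}$. Write $a_m=2^m h(2^{-m})$; the hypothesis $h\prec 1$ translates precisely to $a_m\to\infty$.

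For $\diam A\in(2^{-m-1},2^{-m}]$, monotonicity of $h$ gives $h(\diam A)\geq h(2^{-m-1})=2^{-m-1}a_{m+1}$, so the mass distribution estimate $\mu(A)\leq K\,h(\diam A)$ will follow as soon as
$$
  |I\cap[0,m)|\;\leq\;\log_2 a_{m+1}+\log_2 K-1
$$
holds for every sufficiently large $m$. Since $a_m\to\infty$, for each $k$ there is a threshold $M_k$ beyond which the right-hand side is at least $k$; taking $n_k=\max(n_{k-1}+1,M_{k+1})$ inductively produces an infinite $I=\{n_0<n_1<\dots\}$ satisfying the displayed inequality. The mass distribution principle then yields $\hm^h(C_I)\geq 1/K>0$, as required.

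The only delicate point is the displayed inequality itself; once one sees that $h\prec1$ is exactly the reservoir of slack producing $\log_2 a_m\to\infty$, the construction of $I$ is routine bookkeeping.
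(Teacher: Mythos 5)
Your proof is correct and takes essentially the same route as the paper's: choose $I$ sparse enough relative to the divergence of $h(2^{-n})/2^{-n}$ and use the canonical product measure on $C_I$ as a mass distribution to bound $\hm^h(C_I)$ from below (the paper exploits that diameters in $\Cset$ are exact powers of $2$, so it gets the bound directly without your constant $K$). Note only that your sparsity inequality automatically forces $\Nset\setminus I$ to be infinite, so the measure is nonatomic and zero-diameter sets cause no trouble in the mass distribution step.
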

\begin{proof}
%
(i) Let $I\in[\Nset]^\Nset$. For each $n\in\Nset$,
the family $\{\cyl p:p\in C_I\rest n\}$ is obviously a $2^{-n}$-cover
of $C_I$ of cardinality $2^{\abs{n\setminus I}}$. Therefore
$\hm^1_{2^{-n}}(C_I)\leq2^{\abs{n\setminus I}}2^{-n} =2^{-\abs{n\cap I}}$.
Hence $\hm^1(C_I)\leq\lim_{n\to\infty}2^{-\abs{n\cap I}}=0$.

(ii)
$h\prec 1$ yields  $\frac{h(2^{-n})}{2^{-n}}\to\infty$. Therefore there is
$I\in[\Nset]^\Nset$ sparse enough to satisfy
$2^{\abs{n\cap I}}\leq\frac{h(2^{-n})}{2^{-n}}$,
i.e., $2^{-\abs{n\setminus I}}\leq h(2^{-n})$ for all $n\in\Nset$.
Consider the product measure on $C_I$ given as follows: If $p\in2^n$ and
$\cyl p\cap C_I\neq\emptyset$, put
$\lambda(\cyl p\cap C_I)=2^{-\abs{n\setminus I}}$.
Straightforward calculation shows that $h(dE)\geq\lambda(E)$ for each $E\subs C_I$.
Hence $\sum_n h(dE_n)\geq\sum_n\lambda(E_n)\geq\lambda(C_I)=1$
for each cover $\{E_n\}$ of $C_I$ and $\hm^h(C_I)\geq 1$ follows.
\end{proof}

\begin{thm}\label{prodHnull}
The following are equivalent.
\begin{enum}
\item $X$ is \smz{},
\item $\hm^h(X\times Y)=0$ for every gauge $h$ and every \si compact metric space $Y$
such that $\hm^h(Y)=0$,
\item $\hm^1(X\times E)=0$ for every $E\in\EE$,
\item $\hm^1(X\times C_I)=0$ for every $I\in[\Nset]^\Nset$.
\end{enum}
\end{thm}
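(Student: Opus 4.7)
I propose to prove the cycle $(i)\Rightarrow(ii)\Rightarrow(iii)\Rightarrow(iv)\Rightarrow(i)$. The short implications first: any $E\in\EE$ sits inside an $F_\sigma$-set $F\subs\Cset$ of Haar measure zero; as closed subsets of the compact group $\Cset$ are compact, $F$ is \si compact, and since $\hm^1$ agrees with Haar measure on Borel sets of $\Cset$ we have $\hm^1(F)=0$. Applying (ii) with $h(r)=r$ yields $\hm^1(X\times F)=0$, hence $\hm^1(X\times E)=0$, proving $(ii)\Rightarrow(iii)$. Lemma~\ref{EC}(i) identifies $C_I\in\EE$, whence $(iii)\Rightarrow(iv)$.

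For $(iv)\Rightarrow(i)$ I invoke Theorem~\ref{basicHnull} to reduce to showing $\hm^h(X)=0$ for every gauge $h$. Specialising (iv) to $I=\Nset$ makes $C_I=\{0\}$, so $\hm^1(X)=\hm^1(X\times\{0\})=0$; this covers every gauge $h$ with $h\succ 1$ since then $\hm^h\leq\hm^1$. For $h\prec 1$ I set $g(r)=r/h(r)$ (replaced by its non-decreasing rearrangement if necessary), a gauge with $g\prec 1$, and apply Lemma~\ref{EC}(ii) to obtain $I\in[\Nset]^\Nset$ with $\hm^g(C_I)>0$. Lemma~\ref{howroyd} then gives $\hm^h(X)\,\hm^g(C_I)\leq\hm^{hg}(X\times C_I)=\hm^1(X\times C_I)=0$ whenever $h$ is doubling; non-doubling $h$ are handled by sandwiching between doubling gauges via Lemma~\ref{lemHaus}.

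The main direction is $(i)\Rightarrow(ii)$. I first reduce to $Y$ compact by \si subadditivity of $\hm^h$. Given $\eta>0$, compactness of $Y$ together with the right-continuity of $h$ allow one, for each $k\in\Nset$, to pass from a countable $\hm^h$-economical cover of $Y$ to a \emph{finite} open cover $\mc V_k=\{V^k_1,\dots,V^k_{N_k}\}$ with $\diam V^k_i\leq\delta_k\to 0$ and $\sum_ih(\diam V^k_i)<\eta\cdot 2^{-k-1}$ (enlarge each member slightly, then extract a finite subcover). For each pair $(k,i)$ I then invoke \smz{} of $X$ (equivalently, $\hm^h(X)=0$ at every scale by Theorem~\ref{basicHnull}) to obtain a cover $\{U^{k,i}_j\}_j$ of $X$ by sets of diameter at most $\diam V^k_i$ whose $h$-sum can be made as small as needed. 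The rectangles $\{U^{k,i}_j\times V^k_i\}_{k,i,j}$ cover $X\times Y$, and the total $h$-sum is estimated by splitting each term $h(\max(a,b))$ according to which of $a,b$ is larger and balancing the contributions level by level.

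\textbf{Main obstacle.} The delicate point of $(i)\Rightarrow(ii)$ is that \smz{} does not by itself give total boundedness of $X$ at each scale, so the auxiliary $X$-covers $\{U^{k,i}_j\}_j$ may be countably infinite, causing the sum $\sum_{k,i,j}h(\max(\diam U^{k,i}_j,\diam V^k_i))$ to threaten divergence through its $\sum_jh(\diam V^k_i)$ contribution. The remedy is to decompose $X=\bigcup_nX_n$ into bounded pieces (each totally bounded in the settings relevant to the paper, where $X$ is envisaged as a subset of $\Rset^d$ or $\Cset$), treat each $X_n\times Y$ separately using the finite-cover structure at each scale, and assemble the estimates via \si subadditivity of $\hm^h$. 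In $(iv)\Rightarrow(i)$ the subsidiary obstacle is the doubling hypothesis in Lemma~\ref{howroyd}, which is resolved by routine gauge approximation.
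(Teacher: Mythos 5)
Your easy implications (ii)$\Implies$(iii)$\Implies$(iv) are fine, but the main direction (i)$\Implies$(ii) as you set it up does not work, and your own ``remedy'' does not repair it. In your scheme every rectangle $U^{k,i}_j\times V^k_i$ has diameter exactly $\diam V^k_i$ (the $U$'s are smaller by construction and the product carries the maximum metric), so the contribution of the pair $(k,i)$ to the $h$-sum is $N_{k,i}\cdot h(\diam V^k_i)$, where $N_{k,i}$ is the number of sets used to cover $X$ at that scale; the smallness of the $h$-sum of the $U^{k,i}_j$ is irrelevant. Neither \smz{} nor $\hm^h(X)=0$ gives any bound on $N_{k,i}$, and passing to bounded (or even totally bounded) pieces $X_n$ does not help: the covering number of $X_n$ at scale $\diam V^k_i$ blows up as $\diam V^k_i\to0$ and is in no way dominated by the budget $\sum_i h(\diam V^k_i)<\eta 2^{-k}$; moreover the theorem is about an arbitrary metric space $X$, so you may not assume $X\subs\Rset^d$ or $\Cset$. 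The correct move (and the one the paper makes) reverses the quantifiers: for each $j$ fix a \emph{finite} family $\mc U_j$ covering $Y$ with $\sum_{U\in\mc U_j}h(\diam U)<2^{-j}\eta$, put $\eps_j=\min\{\diam U:U\in\mc U_j\}$, and apply \smz{} of $X$ \emph{once} to this sequence to get a single cover $\{V_j\}$ of $X$ with $\diam V_j\leq\eps_j$ --- one set of $X$ per family of $Y$. Then $\{V_j\times U: j\in\Nset,\ U\in\mc U_j\}$ covers $X\times Y$, each rectangle has diameter $\diam U$, and the total $h$-sum is $\sum_j\sum_{U\in\mc U_j}h(\diam U)<2\eta$, giving $\hm^h_\eta(X\times Y)<2\eta$. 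No counting problem ever arises.

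Your (iv)$\Implies$(i) is close to the paper's argument but has two soft spots. First, the case split ``$h\succ1$ or $h\prec1$'' is not exhaustive: a gauge can oscillate and be comparable to $r$ in neither direction. Second, replacing $g(r)=r/h(r)$ by a ``non-decreasing rearrangement'' destroys the identity $g(r)h(r)=r$, which is exactly what lets you read $\hm^{hg}(X\times C_I)$ as $\hm^1(X\times C_I)$; similarly the ``sandwiching by doubling gauges'' needs an argument. The paper avoids all of this by arguing contrapositively with a single normalization: if $\hm^h(X)>0$ one may replace $h$ by a concave gauge $\geq\sqrt r$ dominating it, which makes $h$ doubling (so Lemma~\ref{howroyd} applies) and makes $g(r)=r/h(r)$ automatically an increasing gauge with $g\prec1$ (so Lemma~\ref{EC}(ii) applies), and then $\hm^1(X\times C_I)\geq\hm^h(X)\,\hm^g(C_I)>0$. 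You should adopt that normalization rather than the incomplete case analysis.
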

\begin{proof}
(i)\Implies(ii):
Suppose $X$ is \smz{}. We may clearly suppose that $Y$ is compact.
Fix $\eta>0$.
Since $\hm^h(Y)=0$, for each $j\in\Nset$ there is a finite
family $\mc U_j$ of (open) sets such that $\sum_{U\in\mc U_j}h(\diam U)<2^{-j}\eta$.
We may also assume that $\diam U<\eta$ for all $U\in\mc U_j$.

Let $\eps_j=\min\{\diam U:U\in\mc U_j\}$.
Choose a cover $\{V_j\}$ of $X$ such that $\diam V_j\leq\eps_j$ and define
$$
  \mc W=\{V_j\times U:j\in\Nset,\,U\in\mc U_j\}.
$$
It is obvious that $\mc W$ is a cover of $X\times Y$. Since
$\diam(V_j\times U)=\diam U$ for all $j$ and $U\in\mc U_j$ by the choice
of $\eps_j$, we have
$$
  \sum_{W\in\mc W}h(\diam W)=
  \sum_{j\in\Nset}\sum_{U\in\mc U_j}h(\diam U)<
  \sum_{j\in\Nset}2^{-j}\eta=2\eta.
$$
Therefore $\hm_\eta^h(X\times Y)<2\eta$, which is enough for
$\hm^h(X\times Y)=0$, as $\eta$ was arbitrary.

(ii)\Implies(iii)\Implies(iv) is trivial.

(iv)\Implies(i):
Suppose $X$ is not \smz{}. We will show that $\hm^1(X\times C_I)>0$
for some $I\in[\Nset]^\Nset$.
By assumption there is a gauge $h$ such that $\hm^h(X)>0$.
\emph{Mutatis mutandis} we may assume $h$ be concave and $h(r)\geq\sqrt r$.
In particular, by concavity of $h$ the function $g(r)=r/h(r)$ is increasing and
$h(r)\geq\sqrt r$ yields $\lim_{r\to0}g(r)=0$, i.e., $g$ is a gauge, and
$g\prec1$.

Use
Lemma~\ref{EC}(ii) to find $I\in[\Nset]^\Nset$ such that $\hm^g(C_I)>0$.
Since $h$, being concave, is a doubling gauge, we may
apply Lemma~\ref{howroyd}:
$$
  \hm^1(X\times C_I)=\hm^{h\cdot g}(X\times C_I)\geq\hm^h(X)\cdot\hm^g(C_I)>0.
  \qedhere
$$
\end{proof}
\begin{coro}\label{hdimx}
If $X$ is \smz{}, then $\hdim X\times Y=\hdim Y$ for every \si compact metric space $Y$.
\end{coro}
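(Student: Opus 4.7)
The plan is to prove two inequalities, $\hdim(X\times Y)\leq\hdim Y$ and $\hdim Y\leq\hdim(X\times Y)$, and combine them.

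For the inequality $\hdim Y\leq\hdim(X\times Y)$, assuming $X\neq\emptyset$ (the empty case being trivial), I would invoke the projection $\pi:X\times Y\to Y$. Under the max metric \eqref{maxmetric} on $X\times Y$, this projection is $1$-Lipschitz, since $d_Y(y_1,y_2)\leq\max(d_X(x_1,x_2),d_Y(y_1,y_2))$. Lemma~\ref{lipschitz}(ii) then yields $\hm^s(Y)\leq\hm^s(X\times Y)$ for every $s>0$, and passing to the defining infimum/supremum of Hausdorff dimension gives $\hdim Y\leq\hdim(X\times Y)$.

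The reverse inequality $\hdim(X\times Y)\leq\hdim Y$ is the substantive direction, and it is essentially immediate from Theorem~\ref{prodHnull}. Fix any $s>\hdim Y$. Then $\hm^s(Y)=0$ by definition of Hausdorff dimension. Since $X$ is \smz{} and $Y$ is \si compact, the implication (i)\Implies(ii) of Theorem~\ref{prodHnull}, applied to the gauge $h(r)=r^s$, produces $\hm^s(X\times Y)=0$. Hence $\hdim(X\times Y)\leq s$, and taking the infimum over $s>\hdim Y$ gives $\hdim(X\times Y)\leq\hdim Y$.

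There is no real obstacle here: the corollary is a direct consequence of Theorem~\ref{prodHnull} together with the trivial dimension estimate from the Lipschitz projection. The only mild point worth flagging is the use of the max metric on the product (as fixed in \eqref{maxmetric}), which makes both the $1$-Lipschitz property of $\pi$ and the applicability of Lemma~\ref{howroyd} (implicit in Theorem~\ref{prodHnull}) automatic.
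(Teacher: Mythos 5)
Your proof is correct and is exactly the intended derivation: the paper states Corollary~\ref{hdimx} without proof as an immediate consequence of Theorem~\ref{prodHnull}, namely the implication (i)\Implies(ii) applied to the gauge $h(r)=r^s$ for $s>\hdim Y$, combined with the standard lower bound $\hdim Y\leq\hdim(X\times Y)$ coming from the $1$-Lipschitz projection in the max metric (the Lipschitz dimension bound already noted after Lemma~\ref{lipschitz}). No gaps worth flagging, apart from the harmless convention that $X$ is nonempty, which the paper also tacitly assumes.
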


\section{Sharp measure zero}
\label{sec:ssmz}

In this section we develop elementary theory of a notion a bit stronger than that of
strong measure zero. The following definition is inspired by
Theorem~\ref{basicHnull}(iii).
\begin{defn}
A metric space $X$ has \emph{sharp measure zero} (\ssmz{}) if
for every uniformly continuous mapping $fX\to Y$ into a complete metric space $Y$
there is a \si compact set $K\subs Y$ such that $f(X)\subs K$ and $\hdim K=0$.
\end{defn}

It is obvious that \ssmz{} is a \si additive property and that it is preserved
by uniformly continuous maps:
\begin{prop}\label{trivUH}
\begin{enum}
\item
If $X$ is a metric space, then
the family of all \ssmz{} subsets of $X$ forms a \si ideal.
\item
If $X$ is \ssmz{} and $f:X\to Y$ is a uniformly continuous mapping, then
$f(X)$ is \ssmz{}.
\end{enum}
\end{prop}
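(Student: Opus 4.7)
The plan is to dispatch (ii) first (a single composition argument), then address the two requirements for being a \si ideal in (i)---closure under countable unions and hereditariness---separately.

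For (ii), given a uniformly continuous $h: f(X) \to Z$ with $Z$ complete, the composite $h \circ f : X \to Z$ is uniformly continuous, so applying \ssmz{} of $X$ produces a \si compact $K \subs Z$ with $\hdim K = 0$ and $h(f(X)) = (h \circ f)(X) \subs K$. For closure under countable unions, if $\{X_n\}$ are \ssmz{} subsets of $X$ and $f: \bigcup_n X_n \to Y$ is uniformly continuous with $Y$ complete, each restriction $f \rest X_n$ yields, by \ssmz{} of $X_n$, a \si compact $K_n \subs Y$ with $f(X_n) \subs K_n$ and $\hdim K_n = 0$. Then $K = \bigcup_n K_n$ is \si compact, $\hdim K = \sup_n \hdim K_n = 0$ by countable stability of Hausdorff dimension, and $f(\bigcup_n X_n) \subs K$.

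For hereditariness, let $A \subs X$ with $X$ being \ssmz{} and $g: A \to Y$ uniformly continuous with $Y$ complete. The difficulty is that \ssmz{} controls only maps defined on all of $X$, whereas $g$ lives only on $A$. I would circumvent this via a Banach-space extension. Isometrically embed $Y$ as a closed subset of $\ell^\infty(Y)$ through the Kuratowski embedding $y \mapsto \bigl(d(y,z) - d(y_0,z)\bigr)_{z \in Y}$ (the image is closed since $Y$ is complete), and then extend $g: A \to \ell^\infty(Y)$ to a uniformly continuous $\tilde g: X \to \ell^\infty(Y)$ by applying McShane's formula $\tilde g_z(x) = \inf_{a \in A}\bigl(g_z(a) + \omega(d(x,a))\bigr)$ coordinatewise, where $\omega$ is a concave majorant of the modulus of continuity of $g$; the estimate $\sup_z \lvert\tilde g_z(x)\rvert \leq \|g(a_0)\| + \omega(d(x,a_0)) < \infty$ for any fixed $a_0 \in A$ guarantees $\tilde g(x) \in \ell^\infty(Y)$, and the common modulus $\omega$ yields uniform continuity of $\tilde g$. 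Now apply \ssmz{} of $X$ to $\tilde g$ to obtain a \si compact $K' \subs \ell^\infty(Y)$ with $\tilde g(X) \subs K'$ and $\hdim K' = 0$. Setting $K := K' \cap Y$: since $Y$ is closed in $\ell^\infty(Y)$, this $K$ is \si compact; it contains $g(A) = \tilde g(A)$; and $\hdim K \leq \hdim K' = 0$, verifying \ssmz{} of $A$.

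The main obstacle is precisely this extension step. The \ssmz{} definition constrains only maps emanating from the whole space, so heredity forces one to promote a map defined on a subset to one defined on all of $X$ while keeping the codomain in a complete metric space. The Banach-space detour together with McShane extension handles this cleanly, and is the only place where machinery beyond the material of the excerpt enters.
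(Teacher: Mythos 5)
Your part (ii) and the countable-union half of (i) are correct (composition with $f$, respectively countable stability of Hausdorff dimension plus the fact that a countable union of \si compact sets is \si compact). The genuine gap is in the hereditariness step. The McShane formula needs a \emph{finite} concave (hence subadditive) majorant of the modulus of continuity of $g$, and for an unbounded uniformly continuous map on a subset of a general metric space no such finite majorant need exist; worse, a uniformly continuous extension of $g$ to all of $X$ (with values in $\ell^\infty(Y)$ or in any other space) may simply fail to exist, even when $X$ is \ssmz{}. Concretely, let $X=\mathbb{Q}\cap[0,\infty)$ (countable, hence \ssmz{}), $A=\Nset\subs X$, $Y=\Rset$ and $g(n)=n^2$. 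Since $A$ is uniformly discrete, $g$ is uniformly continuous, but its modulus is infinite at $t=1$ (pairs $n,n+1$ give differences $2n+1$), so no finite concave majorant exists. Moreover, if some uniformly continuous $\tilde g:X\to\ell^\infty(\Rset)$ extended the Kuratowski embedding composed with $g$, then composing $\tilde g$ with a single $1$-Lipschitz coordinate functional would produce a uniformly continuous real function on $\mathbb{Q}\cap[0,\infty)$ taking the value $n^2$ at each $n$; such a function extends to a uniformly continuous function on $[0,\infty)$, which has at most affine growth --- a contradiction. So the step ``promote $g$ to a map on all of $X$'' fails as stated, even though the conclusion for this example is trivially true.

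The defect is repairable inside your framework. First note, directly from the definition, that an \ssmz{} space is separable: apply the definition to the isometric inclusion of $X$ into its completion, so that $X$ is contained in a \si compact set. Hence $A$ and $g(A)$ are separable, and $A$ splits into countably many pieces $A_n$ on which $g$ is bounded (pull back balls of radius $1$ centered at a countable dense subset of $g(A)$). A bounded uniformly continuous map does admit a finite concave modulus, so your Kuratowski--McShane extension argument works for each $g\rest A_n$, and the countable-union argument you already gave for (i) assembles the resulting \si compact, dimension-zero covers of the sets $g(A_n)$ into one for $g(A)$. Alternatively, hereditariness is immediate once \ssmz{} is identified with the vanishing of the outer measures $\uhm^h$ for all gauges $h$ (Theorem~\ref{basicUhnull}), since outer measures are monotone; this is presumably why the paper states the proposition without proof, so there is no written argument there to compare yours against.
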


In this section we provide a few characterizations and describe a few properties
of sharp measure zero.

\subsection*{Upper Hausdorff measure}

It turns out that sharp measure zero can be described in terms of
a fractal measure very similar to Hausdorff measure.
It is defined thus: let $h$ be a gauge. For each $\delta>0$ set
$$
  \uhm^h_\delta(E)=
  \inf\left\{\sum_{n=0}^N h(\diam E_n):
  \text{$\{E_n:n\leq N\}$ is a \emph{finite} $\delta$-fine cover of $E$}\right\}.
$$
Then put
$
  \uhm_0^h(E)=\sup_{\delta>0}\uhm^h_\delta(E).
$
The only difference from $\hm^h$ is that only finite covers are taken in account.
It is easy to check that $\uhm_0^h$ is finitely subadditive, but unfortunately
it is not a measure, since it need not be \si additive.
To overcome this difficulty we apply to $\uhm_0^h$ the operation
known as Munroe's \emph{Method I construction} (cf.~\cite{MR0053186} or \cite{MR0281862}):
$$
  \uhm^h(E)=\inf\Bigl\{\sum_{n\in\Nset}\uhm^h_0(E_n):
  E\subs\bigcup_{n\in\Nset}E_n\Bigr\}.
$$
Thus defined set function is indeed an outer measure whose restriction to
Borel sets is a Borel measure.
\begin{defn}
The measure $\uhm^h$  is called the \emph{$h$-dimensional upper Hausdorff measure}.
\end{defn}
We list some properties of $\uhm_0^h$ and $\uhm^h$.
Some of them will be utilized below and some are provided just to shed more light on the
notion of upper Hausdorff measure.
The straightforward proofs are omitted.
Denote $\NNs(\uhm_0^h)$ the family of countable unions of sets $E$ with $\uhm_0^h(E)=0$.
We also write $E_n\upto E$ to denote that $\seq{E_n}$ is an increasing sequence
of sets with union $E$.
\begin{lem}\label{lem1}
Let $h$ be a gauge and $E$ a set in a metric space.
\begin{enum}
\item If $\uhm_0^h(E)<\infty$, then $E$ is totally bounded.
\item $\uhm_0^h(E)=\uhm_0^h(\clos E)$.
\item $\uhm_0^h(E)=\hm^h(E)$ if $E$ is compact.
\item If $X$ is complete, $E\subs X$ and $E\in\NNs(\uhm_0^h)$, then there is a \si compact set
$K\sups E$ such that $\hm^h(K)=0$.
\item If $X$ is complete and $E\subs X$, then
$\uhm^h(E)=\inf\{\hm^h(K):\text{$K\sups E$ is \si compact}\}$.
\item In particular $\uhm^h(E)=\hm^h(E)$ if $E$ is \si compact.
\item If $g\prec h$ and $\uhm^g(E)<\infty$, then $E\in\NNs(\uhm_0^h)$.
\item If $E\in\NNs(\uhm_0^h)$, then there is a sequence $E_n\upto E$ such that
$\uhm_0^g(E_n)=0$ for all $n$.
\item If $\uhm^g(E)<s$, then there is a sequence $E_n\upto X$ such that
$\sup\uhm_0^g(E_n)<s$.
\end{enum}
\end{lem}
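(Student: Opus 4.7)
My plan is to verify each of the nine items in turn, grouping them by the technique used.

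The routine items (i), (ii), (iii), (viii), and (ix) are direct unpackings of the definitions. For (i), $\uhm_\delta^h(E)\leq\uhm_0^h(E)<\infty$ produces a finite $\delta$-fine cover for each $\delta$, which is total boundedness. (ii) follows from monotonicity of $\uhm_0^h$ together with $\diam F=\diam\clos F$. For (iii), given a countable $\delta$-fine cover of a compact $E$ with $\sum h(\diam U_n)\leq\hm_\delta^h(E)+\eps$, one enlarges each $U_n$ to an open $V_n$ with $h(\diam V_n)\leq h(\diam U_n)+\eps/2^{n+1}$ (using right-continuity of $h$) and extracts a finite subcover, producing $\uhm_{2\delta}^h(E)\leq\hm_\delta^h(E)+2\eps$; passing to the limit $\delta\to0$ gives $\uhm_0^h(E)\leq\hm^h(E)$, the reverse being automatic. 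Items (viii) and (ix) use the easily verified finite subadditivity of $\uhm_0^h$: a witnessing cover $\{F_n\}$ yields the ascending sequence $E_n=\bigcup_{i\leq n}F_i$ satisfying the claim.

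The core is (iv), (v), and (vi). For (iv), write $E=\bigcup E_n$ with $\uhm_0^h(E_n)=0$; by (i) each $E_n$ is totally bounded, by completeness $\clos E_n$ is compact, and by (ii) and (iii) $\hm^h(\clos E_n)=\uhm_0^h(E_n)=0$, so $K=\bigcup\clos E_n$ works. For (v), the inequality $\uhm^h(E)\geq\inf\hm^h(K)$ follows the same pattern: an $\eps$-optimal cover $\{E_n\}$ of $E$ with each $\uhm_0^h(E_n)<\infty$ yields compact closures whose union $K$ is \si compact with $\hm^h(K)\leq\uhm^h(E)+\eps$. The reverse inequality is the subtle step. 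Given \si compact $K=\bigcup_n K_n\sups E$ with $\hm^h(K)<\infty$, apply Ulam tightness of the finite measure $\hm^h\rest K$ to peel off disjoint compact pieces $C_m\subs K$ with $\sum_m\hm^h(C_m)=\hm^h(K)$, leaving a residue $R$ of $\hm^h$-measure zero. The key claim is that $\uhm^h(R)=0$: decompose $R=\bigcup_n(R\cap K_n)$, and inside each compact $K_n$, a countable $\delta$-fine cover $\{U_m\}$ of $R\cap K_n$ with $\sum h(\diam U_m)<\eps$ restricts to $\{U_m\cap K_n\}$, whose members have compact closures in $K_n$, so by (ii) and (iii), $\uhm_0^h(U_m\cap K_n)\leq h(\diam U_m)$, whence $\uhm^h(R\cap K_n)<\eps$. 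Countable subadditivity of $\uhm^h$ combined with $\uhm^h(C_m)=\hm^h(C_m)$ on compacts then gives $\uhm^h(E)\leq\sum\hm^h(C_m)+\uhm^h(R)=\hm^h(K)$. Item (vi) is the special case $K=E$ of (v).

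Finally, (vii) is a scale calculation. A witnessing cover of $E$ with $\sum\uhm_0^g(E_n)<\infty$ reduces the claim to proving $\uhm_0^g(F)<\infty\Implies\uhm_0^h(F)=0$; given $\eps>0$, pick $\delta_0$ with $h(r)\leq\eps\,g(r)$ for $r\leq\delta_0$, and for any finite $\delta$-fine cover $\{F_m\}$ with $\delta\leq\delta_0$ one has $\sum h(\diam F_m)\leq\eps\sum g(\diam F_m)\leq\eps\,\uhm_0^g(F)$, so $\uhm_\delta^h(F)\leq\eps\,\uhm_0^g(F)$; letting $\delta\to0$ and then $\eps\to0$ yields $\uhm_0^h(F)=0$. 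The main obstacle is the residue-absorption step in (v); everything else is routine bookkeeping once the right subcover constructions are in place.
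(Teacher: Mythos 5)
Most of what you do is fine: (i)--(iv) and (vii)--(ix) are correct, as is the inequality $\uhm^h(E)\geq\inf\{\hm^h(K):K\sups E\ \text{\si compact}\}$ in (v) (the paper omits all of these proofs as routine, so there is nothing to compare against). The genuine gap is exactly at the step you yourself single out as the crux: the absorption of the residue $R$ in the reverse inequality of (v). Your bound $\uhm_0^h(U_m\cap K_n)\leq h(\diam U_m)$ is false. By (ii) and (iii) the left-hand side equals $\hm^h(\clos{U_m\cap K_n})$, and the Hausdorff measure of a set is not controlled by the gauge of its diameter: covering a set by itself only bounds $\hm^h_\del$ for $\del\geq\diam$, not the limit as $\del\to0$. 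Already for $h(r)=r$ and $K_n$ a rectifiable plane curve of finite length, a subarc of diameter $\del$ can have $\hm^1$-measure $100\del$. Worse, your argument for the key claim never uses $\hm^h(K)<\infty$, and without that hypothesis the claim itself is false: take $h(r)=\sqrt r$, $K_1=[0,1]$ and $R$ a dense $G_\delta$ subset of $[0,1]$ of Hausdorff dimension $0$. Then $\hm^h(R)=0$, but for any countable cover $\{A_j\}$ of $R$ the sets $\clos{A_j}$ cover a comeager set, so by Baire some $\clos{A_j}$ contains an interval, whence $\uhm_0^h(A_j)\geq\hm^h(\clos{A_j})=\infty$ and $\uhm^h(R)=\infty$. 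So the residue cannot be handled by a diameter estimate; the finiteness of $\hm^h\rest K$ must enter.

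The claim you need is true once finiteness is used, but proving it carries the real content of (v)--(vi). A workable repair: since $\hm^h\rest K_n$ is a finite Borel measure, take a Borel hull of the null set $R\cap K_n$ and, by outer regularity, a relatively open $U\subs K_n$ containing it with $\hm^h(U)<\eps$; then decompose $U$ into compact pieces with negligible overlap. Choose $t_m\downarrow 0$ so that each level set $\{x\in K_n:d(x,K_n\setminus U)=t_m\}$ is $\hm^h$-null (only countably many level sets can have positive measure, by finiteness), and put $D_{-1}=\{x\in K_n:d(x,K_n\setminus U)\geq t_0\}$, $D_m=\{x\in K_n:t_{m+1}\leq d(x,K_n\setminus U)\leq t_m\}$. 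These sets are compact, cover $U$, and distinct ones meet only in null level sets, so $\sum_m\hm^h(D_m)\leq\hm^h(U)<\eps$; by (iii), $\uhm^h(R\cap K_n)\leq\sum_m\uhm_0^h(D_m)=\sum_m\hm^h(D_m)<\eps$. With this replacement your peeling argument, and hence (v) and (vi), goes through; in fact applying the same annular decomposition directly to the pieces $K_m\setminus K_{m-1}$ proves (vi) outright and lets you dispense with the Ulam peeling altogether.
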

We will also need lemmas that parallel Lemmas~\ref{howroyd} and~\ref{lipschitz}.
As to the proofs, Lemma~\ref{uhowroyd} is proved in the Appendix
and Lemma~\ref{ulipschitz} is proved exactly the same way as Lemma~\ref{lipschitz}.

\begin{lem}\label{uhowroyd}
Let $X,Y$ be metric spaces and $g$ a gauge and $h$ a doubling gauge. Then
$\hm^h(X)\,\uhm^g(Y)\leq\uhm^{hg}(X\times Y)$.
\end{lem}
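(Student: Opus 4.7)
The strategy is to reduce to a \si compact situation via Lemma~\ref{lem1}(v) and then invoke a Fubini-type slicing inequality derived from Lemma~\ref{howroyd}. Without loss of generality, assume $X, Y$ sit as subsets of complete metric spaces $\bar X, \bar Y$ (their completions, say); this leaves all three quantities $\hm^h(X), \uhm^g(Y), \uhm^{hg}(X\times Y)$ unchanged as they depend only on the intrinsic metrics. We may assume $\hm^h(X)>0$. Fix $\eps>0$ and let $c=\hm^h(X)-\eps>0$; it suffices to prove $c\,\uhm^g(Y)\le\uhm^{hg}(X\times Y)$ and then let $\eps\downarrow 0$. Suppose $\uhm^{hg}(X\times Y)<t$. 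By Lemma~\ref{lem1}(v) applied to $X\times Y\subs\bar X\times\bar Y$, there is a \si compact set $M=\bigcup_n K_n\sups X\times Y$, with each $K_n$ compact, such that $\hm^{hg}(M)<t$. The key point: for every $y\in Y$ the fibre $M^y=\{x\in\bar X:(x,y)\in M\}$ contains $X$, so $\hm^h(M^y)\ge\hm^h(X)>c$.

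Set $L_N=\bigcup_{n\le N}K_n$ (compact) and
\[
  E_N=\{y\in\bar Y:\hm^h(L_N^y)>c\}.
\]
By continuity of Hausdorff measure on increasing unions $L_N^y\upto M^y$, we have $\hm^h(L_N^y)\upto\hm^h(M^y)$; combined with $\hm^h(M^y)>c$ for $y\in Y$, this gives $Y\subs\bigcup_N E_N$. Each $E_N$ is $F_\sigma$: for each $\delta>0$, a standard open-cover compactness argument (any $\delta$-fine cover of the compact slice $L_N^{y_0}$ by open sets persists as a cover of $L_N^y$ for $y$ near $y_0$, since the closed set $L_N\setminus(\bigcup U_i\times\bar Y)$ projects to a compact subset of $\bar Y$ avoiding $y_0$) shows that $y\mapsto\hm^h_\delta(L_N^y)$ is upper semi-continuous on $\bar Y$, so $\{y:\hm^h_\delta(L_N^y)\ge c+1/k\}$ is closed for each $\delta>0$, $k\in\Nset$; since $E_N$ is the union of these closed sets over $\delta$ and $k$, it is $F_\sigma$. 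Hence $E:=\bigcup_N E_N$ is \si compact in $\bar Y$ and contains $Y$, so Lemma~\ref{lem1}(v) yields $\uhm^g(Y)\le\hm^g(E)$.

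To conclude, apply the slicing inequality $\int\hm^h(K^y)\,d\hm^g(y)\le\hm^{hg}(K)$, valid for every compact $K\subs\bar X\times\bar Y$ whenever $h$ is doubling---this is the Fubini-type companion of Lemma~\ref{howroyd}. With $K=L_N$, and using that $\hm^h(L_N^y)>c$ on $E_N$,
\[
  c\,\hm^g(E_N)\le\int_{E_N}\hm^h(L_N^y)\,d\hm^g(y)\le\hm^{hg}(L_N)\le\hm^{hg}(M)<t.
\]
Letting $N\to\infty$ gives $c\,\hm^g(E)\le t$, so $c\,\uhm^g(Y)\le t$, and since $\eps$ was arbitrary the required inequality follows. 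The hard part is the slicing inequality in the last step: while it is the natural fibrewise companion of Lemma~\ref{howroyd} and also depends essentially on the doubling of $h$, it requires a Fubini-type argument that goes beyond the product form stated there, and it is this companion result (rather than Lemma~\ref{howroyd} itself) that carries the weight of the proof.
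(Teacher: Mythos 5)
The decisive step of your argument --- the slicing inequality $\upint\hm^h(K^y)\,d\hm^g(y)\leq\hm^{hg}(K)$ for compact $K$, invoked with the doubling hypothesis on $h$, the gauge measuring the \emph{sections}, and with $g$ arbitrary --- is not proved, and it is not the ``companion'' of Lemma~\ref{howroyd} that the standard machinery delivers. The Fubini-type inequality that can actually be proved (and that the paper establishes in the Appendix via weighted Hausdorff measures, Theorem~\ref{thm:appendix} combined with Howroyd's theorem $\whm^h=\hm^h$ for doubling gauges, Theorem~\ref{HwH}) needs the doubling condition on the gauge of the \emph{integrating} (base) measure, not of the sections: one first gets $\upint\hm^g(E_x)\,d\whm^h(x)\leq\hm^{gh}(E)$ for arbitrary gauges, and doubling of the base gauge is needed precisely to replace the weighted measure $\whm^h$ by $\hm^h$ (since $\whm^h\leq\hm^h$ in general, the replacement goes the wrong way without it). In your orientation the base gauge is $g$, which in Lemma~\ref{uhowroyd} is arbitrary, so the inequality you assert follows neither from Kelly/Howroyd nor from anything in the paper; whether it even holds for non-doubling $g$ is unclear, and the naive proof fails because projections of covering sets satisfy $\hm^g(p_Y(E_i))\not\leq g(\diam E_i)$ in general --- exactly the obstruction the weighted measure is designed to bypass. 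So as written the argument does not close: its ``hard part'' is assumed, and assumed in a form stronger than the known results.

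The fix is to slice the other way, which also makes most of your construction unnecessary. Having chosen a \si compact $M\sups X\times Y$ in $\bar X\times\bar Y$ with $\hm^{hg}(M)<t$, consider the sections $M_x\subs\bar Y$ for $x\in X$: each is a \si compact superset of $Y$, so Lemma~\ref{lem1}(v) gives $\hm^g(M_x)\geq\uhm^g(Y)$ directly --- no sets $E_N$, no upper semicontinuity, no $F_\sigma$ bookkeeping (where, incidentally, ``$F_\sigma$ hence \si compact'' needs the extra observation $E_N\subs p_Y(L_N)$, and the semicontinuity of $y\mapsto\hm^h_\delta(L_N^y)$ needs care since fattening a $\delta$-fine cover to open sets breaks $\delta$-fineness). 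The correctly oriented inequality $\upint\hm^g(M_x)\,d\hm^h(x)\leq\hm^{gh}(M)$, legitimate because the base gauge $h$ is the doubling one, then yields $\hm^h(X)\,\uhm^g(Y)\leq\hm^{gh}(M)<t$, and taking the infimum over all such $M$ (Lemma~\ref{lem1}(v) again) finishes. This is essentially the paper's route; the content you would still owe is the base-doubling Fubini inequality itself, which the paper proves from scratch via weighted covers in the Appendix rather than citing it.
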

\begin{lem}\label{ulipschitz}
Let $f:(X,d_X)\to (Y,d_Y)$ be a mapping.
\begin{enum}
\item
If $f$ is uniformly continuous and a gauge $g$ is its modulus, i.e.,
satisfies~\eqref{lip2},
%
%
then $\uhm^h(f(X))\leq \uhm^{h{\circ}g}(X)$ for any gauge $h$.
\item
If $f$ Lipschitz with Lipschitz constant $L$,
then $\uhm^s(f(X))\leq L^s\uhm^s(X)$ for any $s>0$.
\end{enum}
\end{lem}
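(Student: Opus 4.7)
The plan is to mimic the classical proof of Lemma~\ref{lipschitz}, but perform the estimate first at the level of the premeasure $\uhm_0^h$ (where only finite covers appear) and then propagate it through the Method I construction. The key observation in either case is that a uniformly continuous map sends a $\delta$-fine finite cover of a set $E\subs X$ to a finite cover of $f(E)$ whose diameters are controlled by the modulus.

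First I will handle (i). Fix a gauge $h$ and let $g$ be a modulus of $f$ in the sense of~\eqref{lip2}. Fix $E\subs X$ and $\delta>0$, and let $\{F_k:k\leq N\}$ be an arbitrary finite $\delta$-fine cover of $E$. Then $\{f(F_k):k\leq N\}$ is a finite cover of $f(E)$ and, by~\eqref{lip2}, $\diam f(F_k)\leq g(\diam F_k)\leq g(\delta)$, so it is a $g(\delta)$-fine cover of $f(E)$. Consequently
$$
  \uhm_{g(\delta)}^h(f(E))\leq\sum_{k\leq N}h(\diam f(F_k))
  \leq\sum_{k\leq N}(h{\circ}g)(\diam F_k),
$$
and taking the infimum over all such covers gives $\uhm_{g(\delta)}^h(f(E))\leq\uhm_\delta^{h{\circ}g}(E)$. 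Because $g$ is a gauge, $g(\delta)\to 0$ as $\delta\to 0$, so taking $\delta\to 0$ yields $\uhm_0^h(f(E))\leq\uhm_0^{h{\circ}g}(E)$. Now let $\{E_n\}$ be any countable cover of $X$; then $\{f(E_n)\}$ is a countable cover of $f(X)$, so by the definition of the Method I measure and the estimate just established,
$$
  \uhm^h(f(X))\leq\sum_{n\in\Nset}\uhm_0^h(f(E_n))\leq\sum_{n\in\Nset}\uhm_0^{h{\circ}g}(E_n).
$$
Taking the infimum over covers $\{E_n\}$ of $X$ gives $\uhm^h(f(X))\leq\uhm^{h{\circ}g}(X)$, as required.

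Part (ii) is the special case where $g(r)=Lr$ is a modulus of $f$ and $h(r)=r^s$, for then $h{\circ}g(r)=L^s r^s$, and finite additivity of the sum pulls the factor $L^s$ outside, giving $\uhm_0^s(f(E))\leq L^s\uhm_0^s(E)$ and hence $\uhm^s(f(X))\leq L^s\uhm^s(X)$ by the same covering argument.

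No step is really a serious obstacle; the only thing to be careful about is the order of operations in passing from $\uhm_0^h$ to $\uhm^h$. The naive approach of trying to cover $f(X)$ directly by images of a cover of $X$ does not immediately yield the Method I infimum for $f(X)$ unless one first secures the pointwise inequality $\uhm_0^h(f(E))\leq\uhm_0^{h\circ g}(E)$ on arbitrary subsets $E$; once that is in hand, the outer-measure estimate is automatic. The argument is otherwise formally identical to the proof of Lemma~\ref{lipschitz}.
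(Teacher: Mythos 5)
Your argument is correct and is exactly the routine adaptation the paper has in mind: the paper omits the proof, stating only that Lemma~\ref{ulipschitz} is proved the same way as Lemma~\ref{lipschitz}, and your proof is that standard covering argument, with the right extra care of first establishing $\uhm_0^h(f(E))\leq\uhm_0^{h\circ g}(E)$ for finite $\delta$-fine covers and then pushing the estimate through the Method I infimum.
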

The corresponding \emph{upper Hausdorff dimension} of $X$,
introduced in~\cite{MR2957686}, is defined by
$$
  \uhdim X=\sup\{s>0:\uhm^s(X)=\infty\}=\inf\{s>0:\uhm^s(X)=0\}.
$$
It is clear that $\hdim X\leq\uhdim X$. The inequality may be strict, cf.~examples
in~\cite[Section 2]{MR2957686} and~\cite[Example 4.2]{MR3114775}.

It follows from Lemma~\ref{lem1}(v) that if $X$ is a complete metric space and $E\subs X$,
then $\uhdim E=\inf\{\hdim K:K\supseteq E\text{ is \si compact}\}$.
In particular, if $X$ is \si compact, then $\hdim X=\uhdim X$.

It follows from Lemma~\ref{ulipschitz}(ii) that if $f:X\to Y$
is Lipschitz, then $\uhdim f(X)\leq\uhdim X$.

\medskip
We now establish the \ssmz{} counterpart
Theorem~\ref{basicHnull}.
\begin{thm}\label{basicUhnull}
The following are equivalent.
\begin{enum}
\item $X$ is \ssmz{},
\item $\uhm^h(X)=0$ for each gauge $h$,
\item $\uhdim f(X)=0$ for each uniformly continuous mapping $f$ on $X$,
\item $\uhdim f(X,\rho)=0$ for each uniformly equivalent metric on $X$.
\end{enum}
\end{thm}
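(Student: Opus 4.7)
My plan is to mirror the proof of Theorem~\ref{basicHnull}, replacing Hausdorff measures with their upper counterparts and the Besicovitch equivalence with the reduction of $\uhm^h$ to \si compact approximants supplied by Lemma~\ref{lem1}. I will run the cycle (i)\Implies(iii)\Implies(iv)\Implies(ii)\Implies(i).

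For (i)\Implies(iii), let $f:X\to Y$ be uniformly continuous; after composing with the isometric embedding of $Y$ into its completion, I may assume $Y$ is complete. The definition of \ssmz{} then provides a \si compact $K\subseteq Y$ with $K\supseteq f(X)$ and $\hdim K=0$, and Lemma~\ref{lem1}(vi) gives $\uhdim K=\hdim K=0$. Monotonicity of $\uhdim$ finishes the step. The implication (iii)\Implies(iv) is immediate: for any uniformly equivalent metric $\rho$, the identity $\id:(X,d)\to(X,\rho)$ is uniformly continuous, so (iii) yields $\uhdim(X,\rho)=0$.

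For (iv)\Implies(ii), I would copy the rescaling argument from Theorem~\ref{basicHnull}. Given a gauge $h$, pick a strictly increasing concave gauge $g$ with $g\prec h$ and put $\rho=g\circ d$, a uniformly equivalent metric on $X$. By (iv), $\uhm^1(X,\rho)=0$; the identity $(X,\rho)\to(X,d)$ has modulus $g^{-1}$, so Lemma~\ref{ulipschitz}(i) delivers
$$
  \uhm^h(X,d)\leq\uhm^{h\circ g^{-1}}(X,\rho).
$$
Since $g\prec h$ implies $1\prec h\circ g^{-1}$ and $\uhm^1(X,\rho)<\infty$, Lemma~\ref{lem1}(vii) places $X$ into $\NNs(\uhm_0^{h\circ g^{-1}})$, whence the right-hand side vanishes.

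The genuinely new implication is (ii)\Implies(i), which is where I expect the main obstacle. The obvious attempt yields, for each $s>0$ separately, a \si compact $K_s\supseteq f(X)$ with $\hm^s(K_s)=0$, and the trouble is amalgamating these into one \si compact set of Hausdorff dimension zero. My plan is to bypass the amalgamation with a single ``slow'' gauge. Recall from Section~\ref{sec:smz} that any countable family of gauges admits a $\prec$-lower bound; apply this to $\seq{r^{1/n}:n\in\Nset}$ to obtain a gauge $\varphi$ with $\varphi\prec r^{1/n}$ for every $n$, and hence, by transitivity of $\prec$, with $\varphi\prec r^s$ for every $s>0$. Given $f:X\to Y$ uniformly continuous with $Y$ complete and modulus $\mu$, $\varphi\circ\mu$ is a gauge, so (ii) gives $\uhm^{\varphi\circ\mu}(X)=0$, and Lemma~\ref{ulipschitz}(i) yields $\uhm^\varphi(f(X))=0$. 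Lemma~\ref{lem1}(v), which crucially requires completeness of $Y$, then furnishes a single \si compact $K\supseteq f(X)$ with $\hm^\varphi(K)=0$; Lemma~\ref{lemHaus}(i) applied with each power $r^s$ converts this to $\hm^s(K)=0$, so $\hdim K=0$, and $X$ is \ssmz{}.
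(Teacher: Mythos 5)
Your architecture matches the paper's: the paper also treats (iii)\Implies(iv) as trivial, runs (iv)\Implies(ii)\Implies(iii) exactly as in Theorem~\ref{basicHnull} with Lemma~\ref{ulipschitz} replacing Lemma~\ref{lipschitz}, and attributes the passage between (i) and the measure/dimension statements to Lemma~\ref{lem1}. Your slow-gauge construction of a single \si compact hull is precisely the content hidden in the paper's phrase ``follows at once from Lemma~\ref{lem1}'', so the only structural difference is that you close the cycle through (ii)\Implies(i) rather than (iii)\Implies(i); that is a sound choice, since (ii) gives you $\uhm^{\varphi\circ\mu}(X)=0$ for the non-power gauge you need.

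One citation in your (ii)\Implies(i) is overstated, although the argument survives. Lemma~\ref{lem1}(v) only says that $\uhm^\varphi(f(X))$ is the \emph{infimum} of $\hm^\varphi(K)$ over \si compact $K\sups f(X)$; an infimum equal to $0$ does not furnish a \si compact $K$ with $\hm^\varphi(K)=0$. In fact the inference ``$\uhm^h(E)=0$ implies $E$ has a \si compact superset of $\hm^h$-measure zero'' is false in general: take $h(r)=r$ and $E\subs[0,1]$ a dense $G_\del$ Lebesgue null set; then $E$ has open (hence \si compact) supersets of arbitrarily small $\hm^1$-measure, so $\uhm^1(E)=0$, but every \si compact superset of $E$ is comeager, hence by Baire some compact piece has nonempty interior and so positive $\hm^1$-measure. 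Fortunately you never need measure zero: Lemma~\ref{lemHaus}(i) requires only $\hm^\varphi(K)<\infty$ together with $\varphi\prec r^s$, and Lemma~\ref{lem1}(v) certainly yields a \si compact $K\sups f(X)$ with, say, $\hm^\varphi(K)<1$; with that single weakening your conclusion $\hm^s(K)=0$ for all $s$, hence $\hdim K=0$, stands. (Alternatively, pick one more gauge $\psi\prec\varphi$, apply (ii) to $\psi\circ\mu$ to get $\uhm^\psi(f(X))=0$, and use Lemma~\ref{lem1}(vii) followed by (iv) to obtain an honest \si compact superset with $\hm^\varphi$-measure zero, which is in effect how the paper's Lemma~\ref{lem1} is meant to be used.)
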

\begin{proof}
(i)$\Leftrightarrow$(iii) follows at once from Lemma~\ref{lem1}.
(iii)\Implies(iv) is trivial.
(iv)\Implies(ii)\Implies(iii) goes exactly the same way as that
in Theorem~\ref{basicHnull},
one has to employ Lemma~\ref{ulipschitz} instead of Lemma~\ref{lipschitz}.
\end{proof}

Our next goal is to describe \ssmz{} in terms of covers. The characterization
parallels Borel's original definition of \smz{}.

\begin{defn}
Let $\seq{U_n}$ be a sequence of sets in $X$.
Recall that $\seq{U_n}$ is called
a \emph{$\gamma$-cover} if each $x\in X$ belongs to all but finitely
many $U_n$.

Recall that $\seq{U_n}$ is called \emph{$\gamma$-groupable cover}
if there is a partition $\Nset=I_0\cup I_1\cup I_2\cup\dots$
into consecutive finite intervals (i.e.~$I_{j+1}$ is on the right of $I_j$ for all $j$)
such that the sequence $\seq{\bigcup_{n\in I_j}U_n:j\in\Nset}$ is a $\gamma$-cover.
The partition $\seq{I_j}$ will be occasionally called \emph{witnessing}
and the finite families $\{U_n:n\in I_j\}$
will be occasionally called \emph{witnessing families}.
\end{defn}

The following is a counterpart of Lemma~\ref{lambda}.
\begin{lem}\label{gammagr}
$E\in\NNs(\uhm_0^h)$ if and only if $E$ has a $\gamma$-groupable
cover $\seq{U_n}$ such that $\sum_{n\in\Nset}h(\diam U_n)<\infty$.
\end{lem}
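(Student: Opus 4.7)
The plan is to imitate the classical proof of Lemma~\ref{lambda} for $\hm^h$. The key observation is that the finite-cover requirement built into the pre-measure $\uhm_0^h$ corresponds exactly to the finite-block structure of a $\gamma$-groupable cover, just as countable covers correspond to $\lambda$-covers in the Hausdorff case. So the same combinatorics as in Lemma~\ref{lambda} should work with little change.

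$(\Leftarrow)$ Suppose $\seq{U_n}$ is a $\gamma$-groupable cover of $E$ witnessed by a partition $\Nset=I_0\cup I_1\cup\cdots$ with $\sum_n h(\diam U_n)<\infty$. Set $V_j=\bigcup_{n\in I_j}U_n$ and $E_k=\{x\in E:x\in V_j\text{ for all }j\geq k\}$. Since $\seq{V_j}$ is a $\gamma$-cover of $E$, $E_k\upto E$, so it suffices to check $\uhm_0^h(E_k)=0$ for each $k$. Summability forces $\diam U_n\to 0$ (as $h$ is non-decreasing and positive off $0$) and forces $\sum_{n\in I_j}h(\diam U_n)\to 0$. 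Given $\delta,\eps>0$, pick $j\geq k$ so large that $\diam U_n<\delta$ for every $n\in I_j$ and $\sum_{n\in I_j}h(\diam U_n)<\eps$; then $\{U_n:n\in I_j\}$ is a finite $\delta$-fine cover of $E_k$ witnessing $\uhm^h_\delta(E_k)<\eps$.

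$(\Rightarrow)$ Write $E=\bigcup_k E_k$ with $\uhm_0^h(E_k)=0$. For each $k,j\in\Nset$, pick a finite $2^{-j}$-fine cover $\mc W_{k,j}$ of $E_k$ satisfying $\sum_{W\in\mc W_{k,j}}h(\diam W)<2^{-k-j}$. Set $\mc U_j=\bigcup_{k\leq j}\mc W_{k,j}$ and enumerate $\mc U_0,\mc U_1,\dots$ consecutively to form $\seq{U_n}$ with witnessing intervals $\seq{I_j}$. Every $x\in E$ lies in some $E_{k_0}$ and hence in $\bigcup\mc U_j$ whenever $j\geq k_0$, so $\seq{\bigcup\mc U_j}$ is a $\gamma$-cover of $E$, and
$$
  \sum_n h(\diam U_n)=\sum_j\sum_{k\leq j}\sum_{W\in\mc W_{k,j}}h(\diam W)<\sum_j\sum_{k\leq j}2^{-k-j}<\sum_j 2^{1-j}<\infty.
$$

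I do not anticipate any substantive obstacle: once one recognises the correspondence between finite blocks of a $\gamma$-groupable cover and finite covers in the definition of $\uhm_0^h$, the proof is routine bookkeeping parallel to the $\hm^h$ case. The only mildly delicate point is extracting $\diam U_n\to 0$ from summability of $\sum h(\diam U_n)$, which uses the standing assumptions that $h$ is non-decreasing with $h(r)>0$ for $r>0$.
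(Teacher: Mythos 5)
Your proof is correct and follows essentially the same route as the paper: the backward direction defines $E_k$ as the points covered by every block from $k$ on and uses the tail-sum/small-diameter argument to get $\uhm_0^h(E_k)=0$, and the forward direction concatenates finite covers of the null pieces with geometrically summable $h$-sums (your $\bigcup_{k\leq j}\mc W_{k,j}$ just replaces the paper's preliminary step of making the union increasing). No gaps worth noting.
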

\begin{proof}
\Implies{} Let $E_n\upto E$, $\uhm^g_0(E_n)=0$.
For each $n$ let $\mc G_n$ be a finite cover of $E_n$ such that
$\sum_{G\in\mc G_n}g(\diam G)<2^{-n}$. The required cover is $\mc G=\bigcup_n\mc G_n$,
with $\mc G_n$ the witnessing families.

$\Leftarrow$ Let $\mc G_j$ be the witnessing families.
Put $E_k=\bigcap_{j\geq k}\bigcup\mc G_j$. Fix $k$. The set $E_k$ is covered
by each $\mc G_j$, $j\geq k$, and $\sum_{G\in\mc G_j}g(\diam G)$ is as
small as needed if $j$ is large enough. Hence $\uhm^g_0(E_k)=0$.
\end{proof}

We often deal with sequences of positive real numbers. Instead
of writing always ``let $\seqeps$ be a sequence of positive numbers''
we briefly write ``let $\Seqeps$''.

Let $X$ be a metric space and let $\seq{U_n:n\in\Nset}$ be a sequence of
subsets of $X$. Say that $\seq{U_n:n\in\Nset}$ is \emph{$\seqeps$-fine}
if $\diam U_n\leq\eps_n$ holds for all $n$.

\begin{lem}\label{seqep}
\begin{enum}
\item
For each $\Seqeps$ there exists a gauge $h$ such that if $X$ is a metric space and
$\uhm^h(X)=0$, then $X$ admits an $\seqeps$-fine $\gamma$-groupable cover.
\item
For each gauge $h$ there exists $\Seqeps$ such that if $X$ is a metric space
that admits an $\seqeps$-fine $\gamma$-groupable cover, then $\uhm^h(X)=0$.
\end{enum}
\end{lem}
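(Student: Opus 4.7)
The plan treats the two directions separately, with (ii) being the easier one.

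For (ii), given a gauge $h$, choose $\eps_n>0$ with $h(\eps_n)\leq 2^{-n}$; such $\eps_n$ exist because $h$ is right-continuous with $h(0)=0$. Any $\langle\eps_n\rangle$-fine $\gamma$-groupable cover $\{U_n\}$ of $X$ then satisfies $\sum_n h(\diam U_n)\leq\sum_n h(\eps_n)\leq 1$ by monotonicity of $h$, so by Lemma~\ref{gammagr}, $X\in\NNs(\uhm_0^h)$. Writing $X=\bigcup_k X_k$ with $\uhm_0^h(X_k)=0$ and applying the Method~I definition gives $\uhm^h(X)\leq\sum_k\uhm_0^h(X_k)=0$.

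For (i), assume WLOG that $\langle\eps_n\rangle$ is strictly decreasing to $0$. The idea is to design a gauge $h$ whose finite-sum condition forces covers to fit the $\eps_n$-bounds. Define $h$ piecewise by $h(\eps_n)\approx 1/(n+1)$ and interpolate to a gauge on $[0,\infty)$. Assume $\uhm^h(X)=0$. By Munroe's Method~I definition, for each $m$ there is a countable cover $X\subseteq\bigcup_n E_n^{(m)}$ with $\sum_n\uhm_0^h(E_n^{(m)})<2^{-m}$, and for each pair $(m,n)$ there is a finite cover $\mc G_n^{(m)}$ of $E_n^{(m)}$ with $h$-sum near $\uhm_0^h(E_n^{(m)})$. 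Combining yields a countable family $\mc G=\bigsqcup_{m,n}\mc G_n^{(m)}$ covering $X$ with $\sum_{G\in\mc G}h(\diam G)<\infty$. The counting principle that any $G$ with $\diam G>\eps_n$ contributes at least $h(\eps_n)\approx 1/(n+1)$ to the sum shows that at most $O(n)$ such sets exist, and hence that a sort by decreasing diameter would yield $\diam U_n\leq\eps_n$ for every $n$.

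The main obstacle is that this global sort may destroy the $\gamma$-groupable partition structure. The construction therefore proceeds block-wise: invoking Lemma~\ref{lem1}(viii) (or a direct argument from $\uhm^h(X)=0$), obtain a decomposition $X=\bigcup_k X_k$ with $X_k\upto X$ and $\uhm_0^h(X_k)=0$; pick finite covers $\mc G_k$ of $X_k$ with $\sum_{G\in\mc G_k}h(\diam G)<2^{-k-1}$; and enumerate $\mc G=\bigsqcup_k\mc G_k$ in $k$-blocks, so that the natural partition $\{I_k\}$ where $I_k$ is the interval of positions occupied by $\mc G_k$ witnesses $\gamma$-groupability via $X_k\upto X$. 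The delicate balancing step is to arrange the design of $h$, the partition sizes $|I_k|$, and the choice of $\mc G_k$ so that the maximum diameter occurring in $\mc G_k$ does not exceed the smallest $\eps_n$ for $n\in I_k$. This will require splitting each $\mc G_k$ into a gauge-bounded number of ``large'' sets (diameter greater than a threshold $\delta_{k+1}$) and redistributing the ``small'' sets of $\mc G_k$ into later blocks; the redistribution preserves the $\gamma$-cover property because every $\mc G_{k'}$ with $k'\geq k$ also covers $X_k$. Choosing the thresholds $\delta_k$ and block sizes so that $\delta_k\leq\eps_{\max I_k}$ then completes the construction.
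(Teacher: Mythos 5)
Your part (ii) is correct and is essentially the paper's argument: pick $\eps_n$ with $h(\eps_n)$ summable, bound $\sum_n h(\diam U_n)$ by monotonicity, and conclude via Lemma~\ref{gammagr} and the Method~I definition. The problem is part (i), and the gap is not the bookkeeping you flag at the end but an earlier step you treat as routine: from $\uhm^h(X)=0$ you claim a decomposition $X_k\upto X$ with $\uhm_0^h(X_k)=0$, ``invoking Lemma~\ref{lem1}(viii) or a direct argument.'' Lemma~\ref{lem1}(viii) has hypothesis $X\in\NNs(\uhm_0^h)$, not $\uhm^h(X)=0$, and the implication $\uhm^h(X)=0\Rightarrow X\in\NNs(\uhm_0^h)$ is false in general for a single gauge. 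Concretely, for $h(r)=r$ take $X\subs[0,1]$ a comeager $G_\delta$ set of Lebesgue measure zero: its complement is a meager $F_\sigma$ set of full measure, so removing a compact subset of that complement of measure $>1-\eps$ leaves an open (hence \si compact) superset of $X$ of measure $<\eps$, whence $\uhm^1(X)=0$ by Lemma~\ref{lem1}(v); yet $X\notin\NNs(\uhm_0^1)$, since by Lemma~\ref{lem1}(iv) that would put $X$ inside a \si compact Lebesgue-null (hence meager) set, contradicting comeagerness. This is exactly why the paper uses \emph{two} gauges: it takes $h$ with $h(\eps_n)>1/n$ and outputs as the lemma's gauge some $g\prec h$, so that $\uhm^g(X)=0$ gives $X\in\NNs(\uhm_0^h)$ via Lemma~\ref{lem1}(vii). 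Your single gauge $h$ with $h(\eps_n)\approx 1/(n+1)$ cannot play both roles, so the proof as written does not go through (whether your exact statement could be salvaged by a different argument is not addressed by your sketch).

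Even granting the decomposition, the remaining construction is only a plan, and the plan hides the real difficulty: the fineness required of the sets in block $\mc G_k$ depends on the positions $I_k$ they will occupy, which in turn depend on $\abs{\mc G_k}$, a circularity your ``splitting and redistributing'' scheme does not resolve. The paper resolves it differently and completely: starting from a $\gamma$-groupable cover $\{G_n\}$ with $\sum_n h(\diam G_n)<\infty$ (Lemma~\ref{gammagr}), it replaces $\diam G_n$ by slightly larger positive $\del_n$, extracts a sparse subsequence of witnessing groups so that (a) the $h(\del_n)$-sums of the kept groups are summable and (b) diameters in later groups are strictly smaller than in earlier ones, sorts only \emph{within} each kept group, and then the counting bound $h(\diam H_i)\leq h(\del_{i^*})\leq\frac1i\sum_m h(\del_m)<h(\eps_i)$ delivers $\seqeps$-fineness position by position while the kept groups still witness $\gamma$-groupability. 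Some such argument (or an executed version of your redistribution idea) is needed; as it stands, part (i) is incomplete on both counts.
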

\begin{proof}
(i)
Let$\seq{\eps_n}\in(0,\infty)^\Nset$.
Choose gauges $g,h$ such that
$h(\eps_n)>\frac1n$ for all $n\in\Nset$ and $g\prec h$.
Suppose $X$ is a metric space such that $\uhm^g(X)=0$. Then $X\in\NNs(\uhm_0^h)$
by Lemma~\ref{lem1}(vii). By Lemma~\ref{gammagr} there is a
$\gamma$-groupable cover $\{G_n\}$ such that $\sum_nh(\diam G_n)<\infty$.
Let $\{I_j:j\in\Nset\}$ be the witnessing partition and $\mc G_j=\{G_n:n\in I_j\}$
the witnessing groups.

We plan to permute the cover so that diameters decrease.
One obstacle is that some of them may be $0$.
Another one is that permutation may break down the witnessing groups.
We have to work around these difficulties.

For each $n$ choose $\del_n>\diam G_n$ so that $\sum_nh(\del_n)<\infty$.
Then recursively choose an increasing sequence $\seq{j_k}$ such that
for all $k\in\Nset$
\begin{enumerate}
\item[(a)] $\sum\{h(\del_n):n\in I_{j_k}\}<2^{-k}$,
\item[(b)] $\max\{\del_n:n\in I_{j_{k+1}}\}<\min\{\del_n:n\in I_{j_{k}}\}$
(this is possible since $\del_n$'s are positive).
\end{enumerate}
Let $I=\bigcup_{k\in\Nset}I_{j_k}$.
Permute $G_n$'s within each $\mc G_{j_k}$ so that $\del_n$ does not increase
as $n$ increases. Together with (b) this ensures that
the sequence $\seq{\del_n:n\in I}$ is nonincreasing.
For each $i\in\Nset$ let $i^*\in I$ be the unique index such that $i=\abs{I\cap i^*}$
and define $H_i=G_{i^*}$.
It follows, with the aid of (a)
and the definition of $h$, that for all $n\in I$
\begin{align*}
  h(\diam H_i)=h(\diam G_{i^*})&\leq h(\del_{i^*})
  \leq\frac{1}{i}\sum\{h(\del_m):m\in I,m\leq i^*\}\\
  &\leq\frac{1}{i}\sum\{h(\del_m):m\in I\}
  \leq\frac{1}{i}<h(\eps_i)
\end{align*}
and thus $\diam H_i\leq\eps_i$, i.e., $\seq{H_i}$ is an $\seq{\eps_i}$-fine sequence.
Moreover, the groups $\mc G_{j_k}:k\in\Nset$ witness that $\seq{H_i}$
is a $\gamma$-groupable cover.

(ii)
Let $h$ be a gauge. Choose $\eps_n<\del$ to satisfy
$\sum_nh(\eps_n)<\infty$. If $X$ is a metric space admitting a $\seq{\eps_n}$-fine
$\gamma$-groupable cover $\seq{G_n}$, then
$\sum_nh(\diam G_n)\leq\sum_nh(\eps_n)<\infty$.
By Lemma~\ref{seqep} $\uhm^h(X)=0$.
\end{proof}
The Borel-like definition of \ssmz{} now follows at once from the above lemma.
\begin{thm}\label{combUhnull}
Let $X$ be a separable metric space.
$X$ is \ssmz{} if and only if for each $\Seqeps$, $X$ has
an $\seqeps$-fine $\gamma$-groupable cover.
\end{thm}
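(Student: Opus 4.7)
The proof should fall out almost mechanically from the two tools already in place: Theorem~\ref{basicUhnull} translates \ssmz{} into the measure-theoretic condition $\uhm^h(X)=0$ for every gauge $h$, and Lemma~\ref{seqep} is precisely the bridge between vanishing upper Hausdorff measure for a gauge and the existence of $\seqeps$-fine $\gamma$-groupable covers. So the plan is to chain these together in both directions.

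For the forward implication, I would assume $X$ is \ssmz{} and fix an arbitrary $\Seqeps$. Applying Lemma~\ref{seqep}(i) yields a gauge $h$ such that $\uhm^h(X)=0$ already guarantees an $\seqeps$-fine $\gamma$-groupable cover of $X$. But Theorem~\ref{basicUhnull}, applied to this particular $h$, says $\uhm^h(X)=0$ since $X$ is \ssmz{}. The desired cover therefore exists.

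For the converse, assume $X$ admits an $\seqeps$-fine $\gamma$-groupable cover for every $\Seqeps$. By Theorem~\ref{basicUhnull} it suffices to show $\uhm^h(X)=0$ for each gauge $h$. So fix $h$ and apply Lemma~\ref{seqep}(ii) to obtain a specific sequence $\Seqeps$ with the property that any $\seqeps$-fine $\gamma$-groupable cover of $X$ forces $\uhm^h(X)=0$. Our hypothesis supplies such a cover for this $\seqeps$, and we conclude $\uhm^h(X)=0$; since $h$ was arbitrary, $X$ is \ssmz{}.

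There is essentially no obstacle: all the real work was absorbed into Lemma~\ref{seqep}, whose second half relies on a permutation trick to reconcile the decreasing-$\eps_n$ requirement with the rigid block structure of a $\gamma$-groupable cover, and whose first half is a direct diameter bookkeeping. Separability of $X$ plays no explicit role in the argument itself; it is presumably stated to keep the setting comfortable (e.g., so that uniformly continuous images land in separable complete spaces, aligning with the definition of \ssmz{}).
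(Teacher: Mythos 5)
Your proof is correct and is exactly the argument the paper intends: the paper derives Theorem~\ref{combUhnull} ``at once'' from Lemma~\ref{seqep}, chaining its two halves with the characterization $\uhm^h(X)=0$ for every gauge $h$ from Theorem~\ref{basicUhnull}(ii), precisely as you do in both directions.
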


Our next goal is to set up a counterpart to Theorem~\ref{prodHnull}.

\begin{thm}\label{prodUhnull}
The following are equivalent.
\begin{enum}
\item $X$ is \ssmz,
\item for each gauge $h$, $Y\in\NNs(\uhm_0^h)$ and each complete space $Z\sups X$
there is a \si compact $F$, $X\subs F\subs Z$,
such that $\uhm^h(F\times Y)=0$,
\item $\uhm^h(X\times Y)=0$ for each gauge $h$ and $Y\in\NNs(\uhm_0^h)$,
\item $\uhm^1(X\times E)=0$ for each $E\in\EE$,
\item $\uhm^1(X\times C)=0$ for each $C\in\CC$.
\end{enum}
\end{thm}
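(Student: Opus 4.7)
I will prove the cycle (i)\Implies(ii)\Implies(iii)\Implies(iv)\Implies(v)\Implies(i), paralleling Theorem~\ref{prodHnull} with the upper-Hausdorff tools (Lemma~\ref{lem1}, Lemma~\ref{uhowroyd}, Lemma~\ref{gammagr}, and Theorem~\ref{combUhnull}) replacing their Hausdorff analogues. The short edges are straightforward: (ii)\Implies(iii) is monotonicity, $\uhm^h(X\times Y)\leq\uhm^h(F\times Y)=0$; (iii)\Implies(iv) uses that each $E\in\EE$ sits in an $F_\sigma$ Haar-null set $\bigcup_n K_n$ with $\uhm_0^1(K_n)=\hm^1(K_n)=0$ by Lemma~\ref{lem1}(iii), placing $E$ in $\NNs(\uhm_0^1)$; and (iv)\Implies(v) is Lemma~\ref{EC}(i).

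For (v)\Implies(i) I argue by contraposition, mimicking Theorem~\ref{prodHnull}(iv)\Implies(i). If $X$ is not \ssmz, Theorem~\ref{basicUhnull} produces a gauge $h$ with $\uhm^h(X)>0$, which mutatis mutandis may be taken concave with $h(r)\geq\sqrt{r}$. Then $g(r)=r/h(r)$ is a gauge with $g\prec 1$, and concavity plus monotonicity of $h$ give $g(2r)/g(r)=2h(r)/h(2r)\in[1,2]$, so $g$ is doubling. Lemma~\ref{EC}(ii) provides $I\in[\Nset]^\Nset$ with $\hm^g(C_I)>0$, and applying Lemma~\ref{uhowroyd} with $g$ as the doubling gauge on the $C_I$-factor yields
\[
  \hm^g(C_I)\,\uhm^h(X)\leq\uhm^{gh}(C_I\times X)=\uhm^1(X\times C_I)>0,
\]
contradicting (v).

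The main obstacle is (i)\Implies(ii). Fix $h$, $Y\in\NNs(\uhm_0^h)$, and complete $Z\sups X$. Using Lemma~\ref{lem1}(ii),(iii),(viii) I reduce to $Y=\bigcup_m Y_m$ with each $Y_m$ compact and $\hm^h(Y_m)=0$; by $\sigma$-subadditivity of $\uhm^h$ it suffices to produce, for each prescribed $\eps>0$, a \si compact $F\sups X$ with $\uhm^h(F\times Y_m)<\eps/2^m$, after which intersecting such $F$'s inside a fixed \si compact cover of $X$ (available from Lemma~\ref{lem1}(iv) applied to $X\in\NNs(\uhm_0^h)$, which itself comes from Theorem~\ref{basicUhnull} plus Lemma~\ref{lem1}(vii)) yields a single \si compact $F$ with $\uhm^h(F\times Y)=0$. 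For the individual bound, I use Theorem~\ref{combUhnull} --- more precisely the permutation construction of Lemma~\ref{seqep}(i) --- to select a $\gamma$-groupable cover $\{V_i\}$ of $X$ whose witnessing groups are aligned in size with prescribed finite $2^{-l}$-fine covers $\mc U_l$ of $Y_m$ satisfying $\sum_{U\in\mc U_l}h(\diam U)<\eps 2^{-m-l}/\abs{\mc U_l}$, and whose elements satisfy $\diam V_i\leq\min_{U\in\mc U_{l(i)}}\diam U$. Total boundedness of $X$ (a consequence of \ssmz) makes each closure $\clos{V_i}$ compact in $Z$, so $F=\bigcup_{l,i}\clos{V_i}$ is \si compact and contains $X$. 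The countable family of boxes $\clos{V_i}\times U$ ($U\in\mc U_{l(i)}$) covers $F\times Y_m$, and since $\diam\bigl(\clos{V_i}\times U\bigr)=\diam U$, the total $h$-weight is bounded by $\sum_l\abs{\mc V_l}\cdot\eps 2^{-m-l}/\abs{\mc U_l}=\eps/2^m$, as desired.

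The subtle points are the combinatorial alignment of the $\gamma$-groupable partition with the prescribed block sizes $\abs{\mc U_l}$, which requires inspecting the subsequence construction in the proof of Lemma~\ref{seqep}(i) rather than treating Theorem~\ref{combUhnull} as a black box; and the diagonal assembly producing a genuine \si compact $F$ (not merely a $K_{\sigma\delta}$), which is handled by carrying out all intersections inside a single preselected \si compact cover of $X$.
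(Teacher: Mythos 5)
Your handling of the short edges and of (v)\Implies(i) is sound and essentially the paper's argument; in fact your check that $g(r)=r/h(r)$ is doubling (so that Lemma~\ref{uhowroyd} can be applied with the plain Hausdorff measure $\hm^g$ sitting on the $C_I$-factor) is a more careful reading of the hypotheses of that lemma than the text itself. The problems are all in (i)\Implies(ii), where your construction deviates from the paper's and two steps fail. First, ``total boundedness of $X$ (a consequence of \ssmz)'' is false: any countable, uniformly discrete metric space is \ssmz{} (every uniformly continuous image is countable, hence contained in a \si compact set of Hausdorff dimension $0$), yet not totally bounded. \ssmz{} only gives $\uhm^h(X)=0$, i.e.\ a countable decomposition into totally bounded pieces; an arbitrary small-diameter set $V_i\subs X$ need not be totally bounded, so $\clos{V_i}$ need not be compact and your $F=\bigcup_{l,i}\clos{V_i}$ is merely $F_\sigma$ in $Z$. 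Second, and decisively, your assembly step produces $K\cap\bigcap_{m,j}F_{m,2^{-j}}$, a countable intersection of \si compact sets, and such intersections are not \si compact in general even inside a compact ambient set (the irrationals are a countable intersection of $F_\sigma$ subsets of $[0,1]$); ``carrying out all intersections inside a preselected \si compact cover of $X$'' does not repair this. Finally, bounding $\sum h(\diam W)$ over a single countable cover does not by itself bound $\uhm^h$ --- that is exactly what distinguishes $\uhm^h$ from $\hm^h$ --- so your per-piece estimate $\uhm^h(F\times Y_m)<\eps/2^m$ needs either the $\gamma$-groupable mechanism of Lemma~\ref{gammagr} or \si compactness together with $\delta$-fine covers at every scale, and the unverified ``alignment'' of $\abs{\mc V_l}$ with $\abs{\mc U_l}$ is carrying real weight there.

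The paper's proof is arranged precisely to avoid all three issues, and you should adopt its pairing. Take a $\gamma$-groupable cover $\mc U$ of $Y$ with $\sum_{U\in\mc U}h(\diam U)<\infty$ (Lemma~\ref{gammagr}), let $\mc U_j$ be its witnessing families, put $\eps_j=\min\{\diam U:U\in\mc U_j\}$, and choose by Theorem~\ref{combUhnull} an $\seq{\eps_j}$-fine $\gamma$-groupable cover $\seq{V_j}$ of $X$ with each $V_j$ closed in $Z$: each group $\mc U_j$ is matched with the \emph{single} set $V_j$, so $\mc W=\{V_j\times U:U\in\mc U_j\}$ has total weight exactly $\sum_{U\in\mc U}h(\diam U)$ and no cardinality bookkeeping arises. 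Setting $F=K\cap\bigcup_{i}\bigcap_{k\geq i}\bigcup\mc V_k$, where $K\sups X$ is the \si compact hull given by Lemma~\ref{lem1}(iv) and $\mc V_k$ are the witnessing families of $\seq{V_j}$, each $\bigcup\mc V_k$ is a finite union of closed sets, hence closed, so $F$ is an $F_\sigma$ subset of $K$ and therefore genuinely \si compact; moreover $\mc W$ is a $\gamma$-groupable cover of $F\times Y$, so Lemma~\ref{gammagr} gives $F\times Y\in\NNs(\uhm_0^h)$ and $\uhm^h(F\times Y)=0$ in one stroke, for the whole of $Y$ and with no $\eps$-dependence, eliminating the decomposition into compact $Y_m$ and the intersection of the sets $F_{m,\eps}$ altogether.
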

\begin{proof}
The proof is similar to that of Theorem~\ref{prodHnull}. The only nontrivial implications
are
(i)\Implies(ii) and (v)\Implies(i).

(i)\Implies(ii):
Let $Z\sups X$ be a complete metric space.
Suppose $X$ is \ssmz{}. By Lemma~\ref{lem1}, $X$ is contained in
a \si compact set $K\subs Z$. Let $h$ be a gauge and $Y\in\NNs(\uhm_0^h)$.
Lemma~\ref{gammagr} yields a $\gamma$-groupable cover $\mc U$ of $Y$ such that
$\sum_{U\in\mc U}h(\diam U)<\infty$.
Denote by $\mc U_j$ the witnessing families.
Let $\eps_j=\min\{\diam U:U\in\mc U_j\}$.
Using Theorem~\ref{combUhnull} choose a $\gamma$-groupable cover $\{V_j\}$ of $X$
such that $\diam V_j\leq\eps_j$.  We may assume that each $V_j$ is a closed subset of $Z$.
Denote by $\mc V_k$ the witnessing families.
Define
\begin{align*}
  \mc W&=\{V_j\times U:j\in\Nset,\,U\in\mc U_j\},\\
  F&=K\cap\bigcup_{i\in\Nset}\bigcap_{k\geq i}\bigcup\mc V_k.
\end{align*}
The set $F\subs Z$ is clearly an $F_\sigma$ subset of $K$ and is thus \si compact.
It is easy to check that $\mc W$ is a $\gamma$-groupable cover of $F\times Y$.
Since $\diam(V_j\times U)=\diam U$ for all $j$ and $U\in\mc U_j$ by the choice
of $\eps_j$, we have
$
  \sum_{W\in\mc W}h(\diam W)=
  \sum_{U\in\mc U}h(\diam U)<\infty.
$
Using Lemma~\ref{gammagr} it follows that $F\times Y\in\NNs(\uhm_0^h)$ and
in particular $\uhm^h(X\times Y)=0$.

(v)\Implies(i):
Suppose $X$ is not \ssmz. We will show that $\uhm^1(X\times C)>0$
for some $C\in\CC$.
By assumption there is a gauge $h$ such that $\uhm^h(X)>0$. As well as in the proof of
Theorem~\ref{prodHnull} suppose $h$ is concave, hence doubling, and
find a gauge $g\prec1$ such that $g(r)h(r)=r$.
Then use Lemma~\ref{EC}(ii) to find $I\in[\Nset]^\Nset$ such that $\hm^g(C_I)>0$
and apply Lemma~\ref{uhowroyd}:
$$
  \uhm^1(X\times C)=\uhm^{h\cdot g}(X\times C)\geq\uhm^h(X)\cdot\hm^g(C)>0.
  \qedhere
$$
\end{proof}
\begin{coro}\label{uhdimx}
If $X$ is \ssmz{} then $\uhdim X\times Y=\uhdim Y$ for every metric space $Y$.
In particular, $\uhdim X\times Y=\hdim Y$ if $Y$ is \si compact.
\end{coro}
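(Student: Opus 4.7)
The plan is to establish the two inequalities $\uhdim Y\leq\uhdim X\times Y$ and $\uhdim X\times Y\leq\uhdim Y$ separately, relying entirely on Theorem~\ref{prodUhnull} and Lemma~\ref{lem1}(vii). The "in particular" clause will then follow from the observation (noted after Lemma~\ref{ulipschitz}) that $\hdim Y=\uhdim Y$ whenever $Y$ is \si compact.

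For the lower bound, assume $X\neq\emptyset$ (otherwise the claim is trivial). Fix any $x_0\in X$. With $X\times Y$ carrying the max metric~\eqref{maxmetric}, the projection $\pi_Y\colon X\times Y\to Y$ is $1$-Lipschitz and maps $X\times Y$ onto $Y$. By Lemma~\ref{ulipschitz}(ii),
\[
  \uhm^s(Y)=\uhm^s(\pi_Y(X\times Y))\leq\uhm^s(X\times Y)
\]
for every $s>0$, and hence $\uhdim Y\leq\uhdim X\times Y$.

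For the upper bound, let $s>\uhdim Y$ be arbitrary and pick an intermediate $s'\in(\uhdim Y,s)$. Then $\uhm^{s'}(Y)=0$, in particular finite. Since $r^{s}/r^{s'}=r^{s-s'}\to0$, we have $r^{s'}\prec r^{s}$, so Lemma~\ref{lem1}(vii) delivers $Y\in\NNs(\uhm_0^{s})$. Because $X$ is \ssmz{}, the implication (i)\Implies(iii) of Theorem~\ref{prodUhnull} applied with the gauge $h(r)=r^s$ yields $\uhm^s(X\times Y)=0$. Since this holds for every $s>\uhdim Y$, the definition of upper Hausdorff dimension gives $\uhdim X\times Y\leq\uhdim Y$.

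The only slightly delicate point — and the one place where a direct appeal to Theorem~\ref{prodUhnull} is not quite enough — is that $\uhm^s(Y)=0$ does not by itself place $Y$ in the class $\NNs(\uhm_0^s)$ needed to invoke clause~(iii). This is circumvented by the standard gauge-slack trick: pay an arbitrarily small increment in the exponent to move from the measure-zero statement to membership in $\NNs$ via Lemma~\ref{lem1}(vii), and then observe that this slack is absorbed when taking the infimum defining $\uhdim$.
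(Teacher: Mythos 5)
Your proof is correct and follows essentially the route the paper intends: the corollary is stated without proof as an immediate consequence of Theorem~\ref{prodUhnull}(iii) (with the slack of Lemma~\ref{lem1}(vii) to pass from $\uhm^{s'}(Y)=0$ to $Y\in\NNs(\uhm_0^{s})$) together with the $1$-Lipschitz projection and Lemma~\ref{ulipschitz}(ii) for the reverse inequality, and the identification $\uhdim Y=\hdim Y$ for \si compact $Y$. The only cosmetic caveat is that for empty $X$ the stated equality degenerates rather than being ``trivial,'' but the paper implicitly assumes $X\neq\emptyset$, so this is not a gap.
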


\subsection*{Products of \smz{} and \ssmz{} sets}
It is well known that a product of two \smz{} sets need not be \smz{}.
Thus the product of two \hnull{} sets need not be \hnull{}.
But if one of the factors is \ssmz{}, the product is \hnull{}:
\begin{thm}\label{productHUH}
\begin{enum}
\item If $X$ and $Y$ are \ssmz{}, then $X\times Y$ is \ssmz{}.
\item If $X$ is \smz{} and $Y$ is \ssmz{}, then $X\times Y$ is \smz{}.
\end{enum}
\end{thm}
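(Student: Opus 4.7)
The plan is to verify each part by reducing it, via Theorems~\ref{basicHnull}/\ref{basicUhnull}, to showing that the appropriate Hausdorff-type measure of $X\times Y$ vanishes on every gauge. The work is then done by the product theorems~\ref{prodHnull} and~\ref{prodUhnull}, combined with the observation that \ssmz{} places $Y$ in $\NNs(\uhm_0^h)$ for every gauge $h$.

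For (i), I would fix an arbitrary gauge $h$ and show $\uhm^h(X\times Y)=0$. The trick is to pass through a smaller gauge: choose $g\prec h$. Because $Y$ is \ssmz{}, Theorem~\ref{basicUhnull} gives $\uhm^g(Y)=0<\infty$, whence Lemma~\ref{lem1}(vii) puts $Y\in\NNs(\uhm_0^h)$. Since $X$ is also \ssmz{}, Theorem~\ref{prodUhnull}, applied with the roles of $X,Y$ matching, delivers $\uhm^h(X\times Y)=0$. As $h$ was arbitrary, Theorem~\ref{basicUhnull} concludes that $X\times Y$ is \ssmz{}.

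For (ii), the argument is parallel but uses the \smz{} product theorem. Fix a gauge $h$; I want $\hm^h(X\times Y)=0$. Embed $Y$ into a complete metric space $Z$; the same $g\prec h$ trick as above yields $Y\in\NNs(\uhm_0^h)$ in $Z$, and Lemma~\ref{lem1}(iv) then supplies a \si compact set $K\subs Z$ with $Y\subs K$ and $\hm^h(K)=0$. Now $X$ is \smz{} and $K$ is \si compact with $\hm^h(K)=0$, so Theorem~\ref{prodHnull} gives $\hm^h(X\times K)=0$. Monotonicity of the outer measure $\hm^h$ on $X\times Z$ with the product (maximum) metric yields $\hm^h(X\times Y)\leq\hm^h(X\times K)=0$, and Theorem~\ref{basicHnull} finishes the proof.

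Neither direction contains a genuinely hard step: the whole edifice of Theorems~\ref{prodHnull} and~\ref{prodUhnull} (already proved) does the heavy lifting. The only point that requires a little care is the passage from ``$Y$ is \ssmz{}'' to ``$Y\in\NNs(\uhm_0^h)$ for every gauge $h$''; this is the role of the auxiliary gauge $g\prec h$ together with Lemma~\ref{lem1}(vii), and it is the single conceptual ingredient one must not overlook.
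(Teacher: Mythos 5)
Your proposal is correct and follows essentially the same route as the paper: reduce both parts to vanishing of the relevant measure via Theorems~\ref{basicHnull}/\ref{basicUhnull}, note that \ssmz{} of $Y$ together with Lemma~\ref{lem1}(vii) (the $g\prec h$ trick) gives $Y\in\NNs(\uhm_0^h)$ for every gauge, and then invoke Theorems~\ref{prodUhnull} and~\ref{prodHnull} respectively. Your write-up merely makes explicit (the auxiliary gauge $g\prec h$, the \si compact hull $K\sups Y$ from Lemma~\ref{lem1}(iv), and the monotonicity step) what the paper's terse proof leaves implicit.
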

\begin{proof}
Suppose $Y$ is \ssmz{}. By Theorem~\ref{basicUhnull}(ii)
and Lemma~\ref{lem1}(vii), $Y\in\NNs(\uhm_0^h)$ for all gauges $h$.

(i) If $X$ is \ssmz{}, then Theorem~\ref{prodUhnull}(iii) yields
$\uhm^h(X\times Y)=0$ for all gauges $h$, which is by
Theorem~\ref{basicUhnull}(ii) enough.

(ii) If $X$ is \smz{}, then Lemma~\ref{lem1} and Theorem~\ref{prodHnull}(ii)
yield $\hm^h(X\times Y)=0$ for all gauges $h$, which is by
Theorem~\ref{basicHnull}(ii) enough.
\end{proof}

\section{The Galvin's game}
\label{sec:galvin}

As already discussed in the introduction, Galvin~\cite{MR3696064}
succeeded to characterize \smz{} sets in \si compact metric  spaces
in terms of a game, cf.~Theorem~\ref{Galvin}.

We consider a similar game and prove a counterpart of Galvin's theorem
for \ssmz{} sets. Note the striking similarity with the Galvin's game.

\begin{defn}
Let $X$ be a metric space. The game $G^\sharp(X)$ is played as follows:
At the $n$-th inning, Player I chooses $\eps_n>0$
and Player II responds with a set $U_n\subs X$ such that
$\diam U_n\leq\eps_n$.
Player II wins if the sequence of sets $\seq{U_n}$
forms a $\gamma$-groupable cover of $X$, otherwise Player I wins.
\end{defn}

\begin{thm}\label{Galvins}
A metric space $X$ is \ssmz{} if and only if Player I does not have
a winning strategy in $G^\sharp(X)$.
\end{thm}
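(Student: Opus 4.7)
The easy direction---that $X$ is \ssmz{} whenever Player~I lacks a winning strategy in $G^\sharp(X)$---is immediate from Theorem~\ref{combUhnull}. Given any $\Seqeps$, the non-adaptive strategy for Player~I that demands $\eps_n$ at the $n$-th inning regardless of Player~II's responses is, by hypothesis, not winning, so some play against it produces a sequence $\seq{U_n}$ with $\diam U_n\leq\eps_n$ that is a $\gamma$-groupable cover of $X$. By Theorem~\ref{combUhnull} this witnesses \ssmz{}.

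For the nontrivial direction, suppose $X$ is \ssmz{} and $\sigma$ is a strategy for Player~I. I plan to defeat $\sigma$ by a tree construction modelled on Galvin's proof of Theorem~\ref{Galvin}. Enumerate the binary tree $\CCset$ breadth-first as $\{s_j: j\in\Nset\}$ with $s_0=\emptyset$, and associate to each $s\in\CCset$ a demand $\eps_s$---with $\eps_\emptyset:=\sigma(\emptyset)$ and $\eps_s:=\sigma(U_{s\rest 1},U_{s\rest 2},\ldots,U_s)$ for $s\neq\emptyset$---together with, for $s\neq\emptyset$, a set $U_s\subs X$ of diameter $\leq\eps_{s^-}$, where $s^-$ is the parent of $s$. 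Since each $\eps_s$ depends only on the $U$'s at strict prefixes of $s$, the bounds $\eps_{s^-}$ at depth $n+1$ are fixed once depth $n$ is complete, so the $U_s$'s can be constructed inductively on $|s|$. The intended counter-play against $\sigma$ will be $(\eps_{\beta\rest(n-1)},U_{\beta\rest n})_{n\geq 1}$ along a carefully chosen branch $\beta\in\Cset$.

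To populate the $U_s$'s at each new depth, I invoke the fact that $X\in\NNs(\uhm_0^h)$ for every gauge $h$, which follows from Theorem~\ref{basicUhnull}(ii) via Lemma~\ref{lem1}(vii). Fixing a sufficiently sparse gauge $h$ and an exhaustion $X_n\upto X$ with $\uhm_0^h(X_n)=0$, at depth $n+1$ the bounds $\{\eps_{s^-}: |s|=n+1\}$ are known, so I can cover $X_n$ by a finite family whose diameters are dominated by the prescribed bounds and distribute its members across the depth-$(n+1)$ nodes. A K\"onig-type extraction, using the finite branching of $\CCset$ together with the nested exhaustion, then produces an infinite branch $\beta\in\Cset$ along which the local depth-wise covers concatenate---with suitably bundled consecutive depths serving as witnessing intervals---into a $\gamma$-groupable cover of $X$.

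The main obstacle is precisely this passage from the depth-wise local covers to a single branch witnessing the global $\gamma$-groupable property: the distribution of the depth-$(n+1)$ finite cover across the tree nodes must be arranged so that the K\"onig argument can follow an $x\in X_n$ through the tree, i.e., so that each $x$ is eventually absorbed by the witnessing bundle along~$\beta$. Once~$\beta$ is fixed, the inductive definition of the $\eps_s$'s guarantees that the resulting play is legal, and the $\gamma$-groupable property ensures Player~II wins; hence $\sigma$ is not a winning strategy and the theorem follows.
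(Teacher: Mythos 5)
Your easy direction is fine and agrees with the paper: against the non-adaptive strategy that demands $\eps_n$ at inning $n$, any defeating play is an $\seqeps$-fine $\gamma$-groupable cover, and Theorem~\ref{combUhnull} applies. The hard direction, however, has a genuine gap, and you name it yourself: you never explain how the K\"onig-type extraction produces a branch $\beta$ along which the chosen sets form a $\gamma$-groupable cover of \emph{all} of $X$. The difficulty is not cosmetic. Along a single branch Player II plays exactly one set per inning, so the winning condition demands that every $x\in X$ lie in the union of the witnessing block of the branch's sets for all but finitely many blocks. K\"onig's lemma gives one infinite branch through a finitely branching tree, but different points of $X$ would need to be ``followed'' along different branches; there is no mechanism in your construction forcing a single branch to absorb every point infinitely often in the grouped sense. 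This is much stronger than the mere covering condition in Galvin's original game, and even there the standard argument does not proceed by branch extraction.

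The paper resolves exactly this obstacle by a different device, which your sketch is missing. Since $X$ is \ssmz{}, write $X$ as an increasing union of totally bounded sets $F_n$, and build finite $\del_n$-fine collections $\mc B_n$ of subsets of $F_n$ such that every subset of $F_n$ of diameter $\leq\del_n/3$ lies in some member of $\mc B_n$, where $\del_n=\inf\{\sigma(B_0,\dots,B_{n-1}):B_i\in\mc B_i\}$ is positive because the infimum runs over finitely many histories. This makes the strategy's demands effectively non-adaptive: one then applies the combinatorial characterization \emph{once}, with the sequence $\seq{\del_n/3}$, to get a $\gamma$-groupable cover $\seq{A_n}$ of $X$, and replaces each $A_n$ by some $B_n\in\mc B_n$ with $A_n\cap F_n\subs B_n$. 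The play $\seq{B_n}$ is legal because $\diam B_n\leq\del_n\leq\sigma(B_0,\dots,B_{n-1})$, and the same witnessing partition shows $\seq{B_n}$ is $\gamma$-groupable, since $x\in A_k$ for some $k$ in almost every block and $x\in F_k$ for all large $k$. In other words, the $\gamma$-groupable structure is imported wholesale from a cover obtained in advance, rather than assembled branch-by-branch; without this (or an equivalent substitute), your argument does not go through.
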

Note that, unlike in Galvin's theorem, $X$ is not \emph{a priori} supposed to be
a subset of a \si compact space.
\begin{proof}
The backwards implication is trivial: if $X$ is not \ssmz{}, then
by Theorem~\ref{combUhnull} there is $\Seqeps$ such that $X$ has
no $\seqeps$-fine $\gamma$-groupable cover. The winning strategy for Player I
is of course to play $\eps_n$ at the $n$-th inning.

For the forward implication we modify the Galvin's proof.
Suppose that $X$ is \ssmz{}. By Theorem~\ref{basicUhnull}(ii)
$\uhm^h(X)=0$ for any gauge $h$ and thus  Lemma~\ref{lem1}(i) yields an
increasing sequence $F_n\upto X$ of totally bounded sets.

Let $\sigma$ be a strategy for Player I. We will show that $\sigma$ is not winning.

Recall~\cite[Lemma 1]{MR3696064}: \emph{If $F$ is totally bounded, then
for every $\del>$ there is a finite $\del$-fine collection $\mc B$ of sets,
such that every subset of $F$ of diameter at most $\del/3$ is contained in
some $B\in\mc B$.}

Using this fact, build recursively $\del_n$ and $\mc B_n$ as follows:
\begin{enumerate}
\item[(a)]
$\del_n=\inf\{\sigma(B_0,B_1,\dots,B_{n-1}):B_i\in\mc B_i,i<n\}$,
\item[(b)]
$\mc B_n$ is a finite $\del_n$-fine collection of subsets of $F_n$,
\item[(c)]
if $A\subs F_n$ and $\diam A\leq\frac{\del_n}{3}$, then $A\subs B$ for some
$B\in\mc B_n$.
\end{enumerate}
Since $X$ is \ssmz{}, there is a $\seq{\del_n/3}$-fine $\gamma$-groupable cover
$\seq{A_n}$ of $X$. By (b), for each $n\in\Nset$ we may choose $B_n\in\mc B_n$
such that $A_n\cap F_n\subs B_n$.
Put $\eps_n=\sigma(B_0,B_1,\dots,B_{n-1})$.
Since $\diam B_n\leq\eps_n$, the sequence $\seq{\eps_0,B_1,\eps_2,B_2,\dots}$
is played according to the strategy $\sigma$.

The sequence $\seq{B_n}$ is clearly $\seqeps$-fine. We claim
that it is also a $\gamma$-groupable cover of $X$.
We know that $\seq{A_n}$ is a $\gamma$-groupable cover. Let
$\seq{I_j}$ be the witnessing partition of $\Nset$.
Fix $x\in X$.
We have $\fmany j\ \exists k\in I_j\ x\in A_k$ and since $F_n\upto X$,
also $\fmany j\ \forall k\in I_j\ x\in F_k$.
Therefore $\fmany j\ \exists k\in I_j\ x\in A_k\cap F_k\subs B_k$.
Thus the partition $\seq{I_j}$ is also witnessing that $\seq{B_n}$ is
a $\gamma$-groupable cover of $X$.

Consequently, $\sigma$ is not a winning strategy.
\end{proof}

\section{\ssmz{}-sets vs.~$\MM$-additive and $\EE$-additive sets}
\label{sec:meager}

In this section we look closer at \ssmz{} subsets of the Cantor set $\Cset$.
Inspired by the Galvin--Mycielski--Solovay Theorem~\ref{GMS}, we
prove that \ssmz{} sets in $\Cset$ are meager-additive and vice versa.
Recall that $\MM$ denotes the ideal of meager sets.

A set $X\subs\Cset$ is called \emph{$\MM$-additive} (or \emph{meager-additive})
if $\forall M\in\MM\ X+M\in\MM$.
We also define a seemingly stronger notion: call $X$ \emph{sharply $\MM$-additive}
if $\forall M\in\MM\ \exists F\sups X\text{ \si compact } F+M\in\MM$.

\begin{thm}\label{mainME1}
For any set $X\subs\Cset$, the following are equivalent.
\begin{enum}
\item $X$ is \ssmz{}, \label{uhnull}
\item $X$ is $\MM$-additive, \label{Madd}
\item $X$ is sharply $\MM$-additive, \label{sMadd}
\item $\forall M\in\MM\ \exists F\sups X\text{ \si compact } F+M\neq\Cset$.
\label{sharpN}
\end{enum}
\end{thm}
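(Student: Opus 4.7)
The implications (iii)$\Rightarrow$(ii) and (iii)$\Rightarrow$(iv) are immediate: if $F\sups X$ is \si compact and $F+M\in\MM$, then $X+M\subs F+M\in\MM$, while a meager set is a proper subset of $\Cset$. My plan is to close the cycle via (i)$\Rightarrow$(iii), (ii)$\Rightarrow$(iii) and (iv)$\Rightarrow$(i), driven by two classical inputs from the combinatorics of $\Cset$.

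First, Bartoszy\'nski's characterization of meager sets: every $M\in\MM$ is contained in some
\[M_{f,y'}=\bigl\{x\in\Cset:\fmany n\ \ x\rest[f(n),f(n+1))\neq y'\rest[f(n),f(n+1))\bigr\}\]
for increasing $f$ and $y'\in\Cset$. Second, the Pawlikowski--Shelah combinatorial characterization of $\MM$-additivity: $X\subs\Cset$ is $\MM$-additive iff for every increasing $f$ there exist increasing $g$ and $y\in\Cset$ with $X$ inside the $F_\sigma$ (hence \si compact) set
\[A_{f,g,y}=\bigl\{x\in\Cset:\fmany n\ \exists k\in[g(n),g(n+1))\ \ x\rest[f(k),f(k+1))=y\rest[f(k),f(k+1))\bigr\}.\]
The key arithmetic lemma, proved by a direct block-level calculation, is $A_{f,g,y}+M_{f,y'}\subs M_{f'',y''}\in\MM$, with $f''(j)=f(g(j))$ and $y''=y+y'$: for $x\in A_{f,g,y}$, $m\in M_{f,y'}$ and large $j$, the index $k\in[g(j),g(j+1))$ supplied by $A_{f,g,y}$ satisfies $(x+y)\rest[f(k),f(k+1))\equiv0$ and $(m+y')\rest[f(k),f(k+1))\not\equiv0$, forcing $(x+m)\rest[f''(j),f''(j+1))\neq y''\rest[f''(j),f''(j+1))$.

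With this in hand, (i)$\Rightarrow$(iii) reduces to reading Pawlikowski--Shelah data off a $\gamma$-groupable cover. Given $M\subs M_{f,y'}$, apply Theorem~\ref{combUhnull} with $\eps_k=2^{-f(k+1)}$ to obtain an $\seqeps$-fine $\gamma$-groupable cover $\seq{V_k}$ of $X$ with witnessing partition $\seq{I_j}$; writing $V_k\subs\cyl{p_k}$ with $p_k\in2^{f(k+1)}$, define $g(j)=\min I_j$ and $y\rest[f(k),f(k+1))=p_k\rest[f(k),f(k+1))$ to read off $X\subs A_{f,g,y}=:F$, and the arithmetic lemma gives $F+M\in\MM$. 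The implication (ii)$\Rightarrow$(iii) is identical, with Pawlikowski--Shelah supplying $X\subs A_{f,g,y}$ directly. For (iv)$\Rightarrow$(i), given $\Seqeps$, pick increasing $f$ with $2^{-f(n+1)}\leq\eps_n$ and construct a single hierarchical meager $F_\sigma$ set $M\in\MM$ engineered so that any $z\in\Cset\setminus(F+M)$ yields, from a \si compact $F$, an $\seqeps$-fine $\gamma$-groupable cover of $F$; applied via (iv) this produces such a cover of $X$, and Theorem~\ref{combUhnull} gives \ssmz{}.

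\textbf{Main obstacle.} The subtle step is (iv)$\Rightarrow$(i). The naive choice $M_{f,0}$ only forces each $x\in F$ to match the escape point $z$ on $f$-blocks infinitely often, yielding a $\lambda$-cover rather than the required $\gamma$-groupable one. Upgrading it demands an $M\in\MM$ whose nested block/window structure is matched to a rapidly growing grouping $g$, so that escape forces a match inside every window $[g(n),g(n+1))$ for almost every $n$; the \si compactness of $F$ (versus an arbitrary set containing $X$) is exactly what lets the escape point $z$ be chosen uniformly across the cover, and calibrating the growth of $f$ and $g$ against $\seqeps$ is the crux of the argument.
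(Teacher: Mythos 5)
Your implications (iii)$\Implies$(ii), (iii)$\Implies$(iv), (i)$\Implies$(iii) and (ii)$\Implies$(iii) are correct, and they do take a genuinely different route from the paper: you work directly with Bartoszy\'nski's base $M_{f,y'}$ of the meager ideal, the Shelah--Pawlikowski characterization, and your block-arithmetic lemma $A_{f,g,y}+M_{f,y'}\subs M_{f\circ g,\,y+y'}$, whereas the paper reaches (iii) from (iv) by a density/Baire argument and enters (i) from (ii) via Shelah's lemma (Lemma~\ref{ShelahM}, Proposition~\ref{MeShelah}), detouring through $\EE$-additivity. The decisive problem is (iv)$\Implies$(i): in your scheme it is the \emph{only} arrow entering (i), and you do not prove it --- you explicitly leave the calibration of $f$, $g$ and $\seqeps$ as ``the crux''. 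Worse, the mechanism you sketch cannot work as described. If $M=\bigcup_N C_N$ is meager with each $C_N$ closed nowhere dense, then $z\notin F+M$ says only that for every $x\in F$ and every $N$ the point $x+z$ escapes $C_N$; pointwise this always decodes into an $\emany$-type (``infinitely often'') matching statement. No single meager $M$ can force matching of $x$ with $z$ inside \emph{almost every} prescribed window $[g(n),g(n+1))$: the set of $w$ that match on some block inside a fixed bounded window is a proper clopen set, hence not dense, so its complement is not nowhere dense and the ``hierarchical'' $M$ you ask for would not be meager.

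There are two ways to close the cycle. The paper's: prove (iv)$\Implies$(iii) directly --- for $M\in\MM$ \si compact and $Q\subs\Cset$ countable dense, $Q+M$ is meager, so (iv) yields a \si compact $F\sups X$ and $z\notin Q+M+F$; then $F+M$ misses the dense set $Q+z$ and, being \si compact, is meager by Baire --- and then enter (i) from (ii) by Shelah's characterization as in Proposition~\ref{MeShelah}. Alternatively, your direct (iv)$\Implies$(i) can be salvaged, but the $\fmany$-window structure must be extracted \emph{a posteriori} from compactness rather than engineered into $M$: given a gauge $h$, choose $f$ with $2^{f(k)}h(2^{-f(k+1)})\leq2^{-k}$, let $M=\bigcup_n\{w:\forall k\geq n\ \ w\rest[f(k),f(k+1))\not\equiv0\}$, and take $F=\bigcup_m F^m\sups X$ \si compact with $z\notin F+M$. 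Each $F^m+z$ is a compact subset of the open set $\{w:\exists k\geq n\ \ w\rest[f(k),f(k+1))\equiv0\}$, so the witness $k$ is bounded on $F^m$; this produces, for each $m$, a sequence of disjoint windows in each of which every $x\in F^m$ matches $z$ on some block, and covering block $k$ by the $2^{f(k)}$ cones $\cyl{p\concat z\rest[f(k),f(k+1))}$, $p\in2^{f(k)}$, gives finite covers with $h$-sum at most $\sum_{k\geq n_i}2^{-k}$, whence $\uhm_0^h(F^m)=0$ and $\uhm^h(X)=0$ for every gauge $h$, i.e.\ (i) by Theorem~\ref{basicUhnull}. As written, however, the proposal has a genuine gap at (iv)$\Implies$(i), and without that arrow the equivalence with (i) is not established.
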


Recall that $\EE$ is the ideal of Haar null $F_\sigma$-sets in $\Cset$.
We consider also $\EE$-additive and sharply $\EE$-additive sets.
A set $X\subs\Cset$ is called \emph{$\EE$-additive} if $\forall M\in\EE\ X+M\in\EE$
and \emph{sharply $\EE$-additive} if
$\forall M\in\EE\ \exists F\sups X\text{ \si compact } F+M\in\EE$.

\begin{thm}\label{mainME2}
For any set $X\subs\Cset$, the following are equivalent.
\begin{enum}
\item $X$ is \ssmz{}, \label{uhnull}
\item $X$ is $\EE$-additive, \label{Eadd}
\item $X$ is sharply $\EE$-additive. \label{sharpE}
\end{enum}
\end{thm}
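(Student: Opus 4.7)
The plan is to use Theorem~\ref{mainME1}, which already identifies \ssmz{} with meager-additivity, and to hook the $\EE$-hypotheses (ii) and (iii) into \ssmz{}. The implication (iii)$\Rightarrow$(ii) is immediate: if $F\sups X$ is \si compact with $F+E\in\EE$, then $X+E\subs F+E$ is itself contained in an $F_\sigma$-set of Haar measure zero, hence in $\EE$.

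The implication (i)$\Rightarrow$(iii) is a push-forward of fractal data through the group operation. Fix $E\in\EE$ and decompose it as $E=\bigcup_n E_n$ with $E_n$ compact and Haar null. Lemma~\ref{lem1}(iii) gives $\uhm_0^1(E_n)=\hm^1(E_n)=0$, so $E\in\NNs(\uhm_0^1)$. Apply Theorem~\ref{prodUhnull}(ii) with the identity gauge $h(r)=r$, $Y=E$, and $Z=\Cset$ to produce a \si compact set $F$, $X\subs F\subs\Cset$, with $\uhm^1(F\times E)=0$. Since $F\times E$ is \si compact, Lemma~\ref{lem1}(vi) upgrades this to $\hm^1(F\times E)=0$. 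A routine check on the least-difference metric shows that the group operation $+\colon\Cset\times\Cset\to\Cset$ is $1$-Lipschitz with respect to the max metric~\eqref{maxmetric}, so Lemma~\ref{lipschitz}(ii) yields $\hm^1(F+E)\leq\hm^1(F\times E)=0$. As a continuous image of the \si compact set $F\times E$, the set $F+E$ is \si compact, hence an $F_\sigma$-set of Haar measure zero, and therefore $F+E\in\EE$.

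The implication (ii)$\Rightarrow$(i) is the crux and the main obstacle. Because $\EE$ is a proper subideal of $\MM$, $\EE$-additivity does not formally imply the meager-additivity delivered by Theorem~\ref{mainME1} (a meager set need not be contained in any member of $\EE$, as fat Cantor sets witness), so a genuine argument is required. The plan is to extract from the measure-theoretic hypothesis $X+E\in\EE$---applied in particular to the canonical sparse sets $C_I$ of Lemma~\ref{EC}---the same interval-partition and copying combinatorics that describes $\MM$-additive subsets of $\Cset$ according to Pawlikowski and Shelah (cited in the introduction), thereby reducing to \ssmz{} via Theorem~\ref{mainME1}. The delicate part---and what I expect to be the most technically involved step---is aligning the measure-theoretic combinatorics produced by $\EE$-additivity with the category-theoretic Pawlikowski--Shelah criterion, uniformly over all sparse patterns; this is precisely the content of the Nowik--Weiss question the theorem answers.
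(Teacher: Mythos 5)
Your implications (iii)\Implies(ii) and (i)\Implies(iii) are correct. The latter is a legitimate and slightly more direct packaging than the paper's: the paper proves (i)\Implies(ii) by applying Theorem~\ref{prodUhnull} to $X\times E$ and the Lipschitz addition map, and only recovers the sharp version (iii) from (ii) later, because it needs the implication (ii)\Implies(iii) anyway to close its cycle; your use of Theorem~\ref{prodUhnull}(ii) with $Z=\Cset$ to get the \si compact hull $F$ first, and then $F+E\in\EE$, is fine.

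The genuine gap is (ii)\Implies(i), which you correctly identify as the crux but then only outline as a plan (``extract the interval-partition combinatorics from $X+C_I\in\EE$ and align it with the Pawlikowski--Shelah criterion''). That is not an argument, and as stated it would not work: the sets $C_I$ of Lemma~\ref{EC}, even allowing translates, are nowhere near a base of $\EE$, so testing $\EE$-additivity against them extracts far too little information. What the paper actually does is: (a) prove Proposition~\ref{EisEsharp}, that $\EE$-additive implies \emph{sharply} $\EE$-additive, using the Bartoszy\'nski--Shelah base $S(f,F)$, $F\in\CCC_f$, of $\EE$ (Lemma~\ref{2.6.3}) and the key observation that the hull $\widetilde X=\{x\in\Cset:x+S(f,F)\subs S(f{\circ}g,G)\}$ is $F_\sigma$ (Lemma~\ref{Einc3}); (b) pass from measure to category by Pawlikowski's theorem (for each meager $M$ there is $E\in\EE$ such that $Y+E\in\NN$ implies $Y+M\neq\Cset$), applied to the \si compact $F\sups X$ with $F+E\in\EE\subs\NN$; (c) upgrade $F+M\neq\Cset$ to ``$F+M$ meager'' by the dense-translate/Baire-category argument, giving $\MM$-additivity; and (d) conclude \ssmz{} via Shelah's characterization (Lemma~\ref{ShelahM}, Proposition~\ref{MeShelah}). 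Note that the sharpness obtained in step (a) is load-bearing, not cosmetic: Pawlikowski's theorem only yields $F+M\neq\Cset$, and one needs $F$ \si compact for $F+M$ to be \si compact with dense complement, hence meager. Without steps (a)--(b) your reduction of (ii) to Theorem~\ref{mainME1} does not go through, so the proposal as written does not prove the theorem.
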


%
\begin{proof}
We shall prove now \ref{mainME1}\eqref{uhnull}$\Implies$\ref{mainME2}\eqref{Eadd}
and
\ref{mainME2}\eqref{sharpE}$\Implies$\ref{mainME1}\eqref{sharpN}%
$\Implies$\ref{mainME1}\eqref{sMadd}%
$\Implies$\ref{mainME1}\eqref{Madd}.
The remaining implications
\ref{mainME1}\eqref{Madd}$\Implies$\ref{mainME1}\eqref{uhnull} and
\ref{mainME2}\eqref{Eadd}$\Implies$\ref{mainME2}\eqref{sharpE}
are subject to standalone Propositions~\ref{MeShelah} and~\ref{EisEsharp}.

\smallskip
\ref{mainME1}\eqref{uhnull}$\Implies$\ref{mainME2}\eqref{Eadd}:
Assume that $X$ is \ssmz{}. Let $E\in\EE$.
By Theorem~\ref{prodUhnull}, $X\times E\in\NNs(\uhm_0^1)$.
Since the mapping $(x,y)\mapsto x+y$ is Lipschitz, Lemma~\ref{lipschitz}(ii) yields
$X+E\in\NNs(\uhm_0^1)=\EE$.

\smallskip
\ref{mainME2}\eqref{sharpE}$\Implies$\ref{mainME1}\eqref{sharpN}:
Denote by $\NN$ the ideal of Haar null sets in $\Cset$.
We employ a theorem of Pawlikowski~\cite{MR1380640}
(or see also~\cite[Theorem 8.1.19]{MR1350295}):
\emph{For each $M\in\MM$ there exists $E\in\EE$ such that for each $Y\subs\Cset$,
if $Y+E\in\NN$, then $Y+M\neq\Cset$.}

Suppose $X$ is sharply $\EE$-additive. Let $M\in\MM$.
Let $E\in\EE$ be the set guaranteed by the Pawlikowski's theorem.
Since $X$ is sharply $\EE$-additive, there is $F\sups X$
\si compact such that $F+E\in\EE\subs\NN$.
Therefore $F+M\neq\Cset$.
Thus $X$ is sharply null.

\smallskip
\ref{mainME1}\eqref{sharpN}$\Implies$\ref{mainME1}\eqref{sMadd}:
Suppose $X$ is sharply null and let $M\in\MM$.
We may assume that $M$ is \si compact.
Let $Q\subs\Cset$ be a countable dense set.
Clearly $Q+M$ is meager.
Therefore there is $F\sups X$ such that $Q+M+F\neq\Cset$.
Choose $z\notin Q+M+F$. Then, for all $q\in Q$, $z\notin q+M+F$, i.e.,
$z+q\notin M+F$. Therefore $(M+F)\cap(Q+z)=\emptyset$.
Since $Q$ is dense, so is $Q+z$.
Therefore the complement of $F+M$ is dense.

Since $F+M$ is a continuous image of a \si compact set $F\times M$,
it is \si compact as well.
Since it has a dense complement, it is meager by
the Baire category theorem.

\smallskip
\ref{mainME1}\eqref{sMadd}$\Implies$\ref{mainME1}\eqref{Madd} is obvious.
\end{proof}
In order to prove that every $\MM$-additive set is \ssmz{} we need
a Shelah's~\cite{MR1324470} (or see \cite[Theorem 2.7.17]{MR1350295})
characterization of $\MM$-additive sets:
\begin{lem}[{\cite{MR1324470}}]\label{ShelahM}
$X$ is $\MM$-additive if and only if
\begin{multline*}
  \forall f\in\UPset\,\,\exists g\in\Pset\,\,\exists y\in\Cset\,\,
  \forall x\in X\,\,\forall^\infty n\,\,\exists k\\
  g(n)\leq f(k)<f(k+1)\leq g(n+1) \&\
  x\rest [f(k),f(k+1))=y\rest [f(k),f(k+1)).
\end{multline*}
\end{lem}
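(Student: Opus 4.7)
The plan is to derive this characterization from Bartoszy\'nski's classical theorem: $A \subseteq \Cset$ is meager if and only if there exist a partition $\pi = (I_n)$ of $\Nset$ into consecutive finite intervals and $y \in \Cset$ such that $A \subseteq N(y,\pi) := \{z\in\Cset : \fmany n\ z\rest I_n \neq y\rest I_n\}$. This reduces $\MM$-additivity of $X$ to a statement purely about matching on partition blocks, which is exactly the shape of the right-hand condition.

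For the easy direction $(\Leftarrow)$, suppose $X$ satisfies the combinatorial condition and let $M\in\MM$. By Bartoszy\'nski, $M \subseteq N(y_0,\pi_f)$ for some $y_0\in\Cset$ and $f\in\UPset$ with blocks $I_k = [f(k),f(k+1))$. Apply the hypothesis to $f$ to obtain $g$ and $y_1$, and set $y = y_0 + y_1$ and $J_n = [g(n),g(n+1))$. For arbitrary $x\in X$ and $z\in M$ and all sufficiently large $n$: $z$ disagrees with $y_0$ on \emph{every} $f$-subinterval of $J_n$ (because $z\in N(y_0,\pi_f)$), and there exists some $k$ with $[f(k),f(k+1))\subseteq J_n$ on which $x$ agrees with $y_1$. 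On that one interval one has $x+z \neq y_0+y_1 = y$, so $(x+z)\rest J_n \neq y\rest J_n$. Hence $X+M \subseteq N(y,\pi_g)$ is meager.

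For the hard direction $(\Rightarrow)$, given $f\in\UPset$ consider the meager set $M_f = N(0,\pi_f)$. By hypothesis $X+M_f \in \MM$, so by Bartoszy\'nski again, $X+M_f \subseteq N(y',\pi')$ for some $y'\in\Cset$ and partition $\pi'$. A preparatory step is to coarsen $\pi'$ to a partition $\pi_g$ whose blocks are unions of consecutive $f$-intervals, adjusting $y'$ to $y$ so that the containment is preserved on a cofinite tail; this uses only that the blocks of $\pi'$ eventually contain entire $f$-intervals. The core step is then to show that the containment $X+M_f \subseteq N(y,\pi_g)$ forces the matching condition: for each $x\in X$ and almost every $n$ some $f$-subinterval $[f(k),f(k+1))\subseteq J_n$ has $x\rest[f(k),f(k+1)) = y\rest[f(k),f(k+1))$. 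If this failed for some $x$ and infinitely many $n$, one could modify any $z\in M_f$ on the offending $f$-intervals of each such $J_n$ to force $(x+z)\rest J_n = y\rest J_n$ on infinitely many $n$; the modified $z'$ still lies in $M_f$ but $x+z'\notin N(y,\pi_g)$, a contradiction.

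The main obstacle is precisely the last step: the delicate diagonalization that produces a \emph{single} $y$ depending only on $f$, uniformly for all $x\in X$. One must carefully exploit the freedom of choosing $z\in M_f$ blockwise while respecting the ``for almost all $k$'' quantifier in the definition of $M_f$, and manage the interplay between the two partitions $\pi_f$ and $\pi_g$. This is the combinatorial heart of Shelah's argument and is the reason the lemma is genuinely nontrivial rather than a routine consequence of Bartoszy\'nski's characterization.
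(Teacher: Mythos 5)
The paper does not actually prove this lemma: it is quoted from Shelah's paper (see also Bartoszy\'nski--Judah, Theorem 2.7.17), so there is no in-paper argument to compare yours with; the published proofs run essentially along the route you chose, via Bartoszy\'nski's characterization of meager sets. Your backward direction is fine as sketched (with the harmless remark that the $g$ supplied by the condition is only eventually increasing, so the blocks $[g(n),g(n+1))$ form an interval partition of a tail of $\Nset$, which suffices for meagerness of the witnessing set).

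In the forward direction, the step you flag as the unresolved ``main obstacle'' is in fact exactly the argument you already sketched; what is genuinely misstated is the preparatory alignment step. You cannot in general ``coarsen $\pi'$ to a partition whose blocks are unions of consecutive $f$-intervals'': the endpoints of $\pi'$ need not meet the range of $f$, and the blocks of $\pi'$ need not eventually contain entire $f$-intervals. The correct move is to build $g$ with values in the range of $f$ so that every block $J_n=[g(n),g(n+1))$ contains an \emph{entire} block of $\pi'$; since disagreement with $y'$ on a sub-block forces disagreement on the block, $X+M_f\subs N(y',\pi')$ passes to $X+M_f\subs N(y',\pi_g)$ with the same $y'$ (no adjustment of $y'$ is needed), and now each $J_n$ is a union of full $f$-intervals. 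With this in hand your contradiction argument closes, and the uniformity in $x$ comes for free because the single $y'$ was obtained by applying Bartoszy\'nski's theorem to the single meager set $X+M_f$: if some $x$ and infinitely many $n$ admitted no matching $f$-subinterval of $J_n$, let $z$ equal $x+y'$ on those $J_n$ and be constantly $1$ elsewhere; every $f$-interval lies inside a single $J_n$, so $z\rest[f(k),f(k+1))$ is nonzero for every $k$ (on the modified blocks because no subinterval matches, elsewhere because $z\equiv 1$ there), hence $z\in M_f$, while $x+z$ agrees with $y'$ on infinitely many $J_n$, contradicting $x+z\in N(y',\pi_g)$. So the lemma does follow from Bartoszy\'nski's characterization along your lines; the gaps in your write-up are the faulty alignment step and your leaving the (essentially correct) core argument unverified rather than carrying it out.
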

\begin{prop}\label{MeShelah}
If $X\subs\Cset$ is $\MM$-additive, then $X$ is \ssmz.
\end{prop}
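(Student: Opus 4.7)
The plan is to reduce the statement to a covering problem and then build the cover directly from the witnesses supplied by Shelah's characterization. By Theorem~\ref{basicUhnull} it suffices to show $\uhm^h(X)=0$ for every gauge $h$, and by Lemma~\ref{gammagr} this will follow once, for each $h$, $X$ admits a $\gamma$-groupable cover $\seq{U_n}$ with $\sum_n h(\diam U_n)<\infty$.

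Fix a gauge $h$. I would choose $f\in\UPset$ strictly increasing, recursively, so that $2^{f(k)}\,h(2^{-f(k+1)})<2^{-k}$ for every $k$; this is possible because $h(r)\to 0$ as $r\to 0^+$. Apply Lemma~\ref{ShelahM} to obtain $g\in\Pset$ and $y\in\Cset$ realizing the stated property. Setting $K_n=\{k:g(n)\leq f(k)<f(k+1)\leq g(n+1)\}$, define the finite family
\[
  \mc U_n \;=\; \bigl\{\cyl{p\concat y\rest[f(k),f(k+1))} : k\in K_n,\; p\in 2^{f(k)}\bigr\},
\]
consisting of basic cylinders of length $f(k+1)$ and thus of diameter $2^{-f(k+1)}$. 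Enumerate the cover $\seq{U_m}$ by listing $\mc U_0,\mc U_1,\ldots$ in succession; the resulting partition of $\Nset$ into consecutive blocks is the candidate witnessing partition.

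That the enumeration is a $\gamma$-groupable cover is precisely Shelah's conclusion: for each $x\in X$ and almost every $n$ there is $k\in K_n$ on which $x$ agrees with $y$, and then the cylinder $\cyl{x\rest f(k+1)}$ lies in $\mc U_n$ and contains $x$. Summing diameters,
\[
  \sum_m h(\diam U_m) \;=\; \sum_n\sum_{k\in K_n} 2^{f(k)}\,h(2^{-f(k+1)}),
\]
and provided each $k$ belongs to at most one $K_n$, the right-hand side is bounded by $\sum_k 2^{-k}=2<\infty$, whereupon Lemma~\ref{gammagr} delivers $X\in\NNs(\uhm_0^h)$, hence $\uhm^h(X)=0$, and Theorem~\ref{basicUhnull} finishes the proof.

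The main obstacle is precisely this multiplicity bound. Shelah's statement furnishes only $g\in\Pset$, with no monotonicity asserted; however, any $n$ with $g(n+1)\leq g(n)$ forces $K_n=\emptyset$, so the condition already compels $g(n+1)>g(n)$ for all but finitely many $n$. I would accordingly replace $g$ by a strictly increasing sequence satisfying the same condition (the standard reformulation of the Shelah/Bartoszy\'nski characterization). Once $g$ is strictly increasing, the inequalities $g(n)\leq f(k)<f(k+1)\leq g(n+1)$ combined with strict monotonicity of both $f$ and $g$ pin down the unique candidate $n=\max\{m:g(m)\leq f(k)\}$, so each $k$ sits in at most one $K_n$, and the summability estimate above goes through.
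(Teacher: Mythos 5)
Your proof is correct and follows essentially the same route as the paper: choose $f$ with $2^{f(k)}h(2^{-f(k+1)})\leq 2^{-k}$, feed it into Shelah's characterization (Lemma~\ref{ShelahM}), cover $X$ by the cylinders $\cyl{p\concat y\rest[f(k),f(k+1))}$ grouped by the blocks determined by $g$, and conclude via Lemma~\ref{gammagr} and Theorem~\ref{basicUhnull}. Your extra care about the monotonicity of $g$ (so that each $k$ lands in at most one block) addresses a point the paper passes over silently, and is handled correctly.
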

\begin{proof}
Let $X\subs\Cset$ be $\MM$-additive. Let $h$ be a gauge.
We have to show that $\uhm^h(X)=0$.
Define recursively $f\in\UPset$
to satisfy
$$
  2^{f(k)}\cdot h\bigl(2^{-f(k+1)}\bigr)\leq 2^{-k},\quad k\in\Nset.
$$
By Lemma~\ref{ShelahM} there is $g\in\Pset$ and $y\in\Cset$ such that
\begin{multline}\label{ShelahM2}
  \forall x\in X\,\,\forall^\infty n\,\,\exists k\\
  g(n)\leq f(k)<g(n+1)\ \&\
  x\rest [f(k),f(k+1))=y\rest [f(k),f(k+1)).
\end{multline}
Recall that if $p\in\CCset$ then $\cyl p$ denotes the cone $\{f\in\Cset:p\subs f\}$.
Define
\begin{alignat*}{3}
  &\mc B_k&&=
  \bigl\{
  \cyl{p\concat y\rest [f(k),f(k+1))}:p\in 2^{f(k)}
  \bigr\},\qquad
  && k\in\Nset,\\
  &\mc G_n&&=\bigcup
  \bigl\{\mc B_k:g(n)\leq f(k)<g(n+1)\bigr\},
  && n\in\Nset,\\
 & \mc B&&=\bigcup_{k\in\Nset}\mc B_k=\bigcup_{n\in\Nset}\mc G_n.
\end{alignat*}
With this notation~\eqref{ShelahM2} reads
\begin{equation}\label{ShelahM22}
  \forall x\in X\,\,\forall^\infty n\,\,\exists G\in\mc G_n\,\,
  x\in G.
\end{equation}
Since each of the families $\mc G_n$ is finite,
it follows that $\mc G_n$'s witness that $\mc B$ is a $\gamma$-groupable
cover of $X$.
Using Lemma~\ref{gammagr} it remains to show
that the Hausdorff sum $\sum_{B\in\mc B}h(\diam B)$ is finite.
Since $\abs{\mc B_k}=2^{f(k)}$ and
$\diam B=2^{-f(k+1)}$ for all $k$ and all $B\in\mc B_k$, we have
$$
  \sum_{B\in\mc B}h(\diam B)=
  \sum_{k\in\Nset}\sum_{B\in\mc B_k}h(\diam B)=
  \sum_{k\in\Nset}2^{f(k)}\cdot h(2^{-f(k+1)})
  \leq\sum_{k\in\Nset}2^{-k}<\infty.
  \qedhere
$$
\end{proof}

In order to prove that every $\EE$-additive set is sharply $\EE$-additive, we
employ a combinatorial description of closed null sets
given by Bartoszy\'nski and Shelah~\cite{MR1186905},
see also~\cite[2.6.A]{MR1350295}.
For $f\in\UPset$ let
$$
  \CCC_f=\left\{\seq{F_n}:\forall n\in\Nset\left(F_n\subs2^{[f(n),f(n+1))}\ \&\
         \frac{\abs{F_n}}{2^{f(n+1)-f(n)}}\leq\frac{1}{2^n}\right)\right\}
$$
and for $f\in\UPset$ and $F\in\CCC_f$ define
$$
  S(f,F)=\{z\in\Cset:\fmany n\in\Nset\ z\rest[f(n),f(n+1))\in F_n\}.
$$
It is easy to check that $S(f,F)\in\EE$ for all $f\in\UPset$ and $F\in\CCC_f$.
By \cite[Theorem 4.2]{MR1186905} (or see~\cite[2.6.3]{MR1350295}),
these sets actually form a base of $\EE$. We need a little more:
\begin{lem}\label{2.6.3}
$\forall E\in\EE\ \forall f\in\UPset\ \exists g\in\UPset\ \exists G\in\CCC_{f{\circ}g}\
E\subs S(f{\circ}g,G)$.
\end{lem}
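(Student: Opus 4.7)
The plan is to start from the Bartoszy\'nski--Shelah base representation of $E$ and then coarsen its partition so that the breakpoints become a subsequence of the values of $f$.

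First, I would apply the cited base theorem to fix some $f_0\in\UPset$ (which may be taken strictly increasing) and $F^0=\seq{F^0_k}\in\CCC_{f_0}$ with $E\subs S(f_0,F^0)$. Next, I would build $g\in\UPset$ by recursion: set $g(0)=0$; given $g(n)$, use unboundedness of $f_0$ to pick $k_n\geq n$ with $f_0(k_n)\geq f(g(n))$, and then use unboundedness of $f$ to pick $g(n+1)>g(n)$ with $f(g(n+1))\geq f_0(k_n+1)$. Then the source block $[f_0(k_n),f_0(k_n+1))$ sits inside the target block $[f(g(n)),f(g(n+1)))$, and $k_n\geq n$. Define
$$
  G_n=\{s\in 2^{[f(g(n)),f(g(n+1)))}:\ s\rest[f_0(k_n),f_0(k_n+1))\in F^0_{k_n}\}.
$$

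The density bound is immediate: elements of $G_n$ are arbitrary outside the sub-block and forced to lie in $F^0_{k_n}$ on the sub-block, so
$$
  \frac{\abs{G_n}}{2^{f(g(n+1))-f(g(n))}}
  =\frac{\abs{F^0_{k_n}}}{2^{f_0(k_n+1)-f_0(k_n)}}
  \leq 2^{-k_n}\leq 2^{-n},
$$
placing $G=\seq{G_n}$ in $\CCC_{f{\circ}g}$. For the containment, any $z\in E\subs S(f_0,F^0)$ satisfies $z\rest[f_0(k),f_0(k+1))\in F^0_k$ for cofinitely many $k$; since $k_n\to\infty$, this applies for all sufficiently large $n$ with $k=k_n$, whence $z\rest[f(g(n)),f(g(n+1)))\in G_n$ by the definition of $G_n$, i.e., $z\in S(f{\circ}g,G)$.

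The only real subtlety is arranging the recursion so that both $k_n\geq n$ (which delivers the required density decay $2^{-n}$) and $[f_0(k_n),f_0(k_n+1))\subs[f(g(n)),f(g(n+1)))$ (which makes the $F^0_{k_n}$-constraint meaningful on the target block) hold simultaneously; since $f,f_0\in\UPset$, both can be achieved by alternating ``wait-for-$f_0$, then wait-for-$f$'' steps. Everything else is routine bookkeeping.
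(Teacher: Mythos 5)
Your argument is correct, but it follows a genuinely different route from the paper's proof. You take the Bartoszy\'nski--Shelah base theorem as a black box to obtain $E\subs S(f_0,F^0)$ for some $f_0\in\UPset$ and $F^0\in\CCC_{f_0}$, and then re-block: interleaving the growth of $f_0$ and $f$, you produce $k_n\geq n$ and $g\in\UPset$ with $[f_0(k_n),f_0(k_n+1))\subs[f{\circ}g(n),f{\circ}g(n+1))$, and let $G_n$ constrain a string only on that sub-block; the density computation $\abs{G_n}\,2^{-(f{\circ}g(n+1)-f{\circ}g(n))}=\abs{F^0_{k_n}}\,2^{-(f_0(k_n+1)-f_0(k_n))}\leq 2^{-k_n}\leq 2^{-n}$ and the cofinal containment (using $k_n\to\infty$) are both right. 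The paper instead argues directly from the definition of $\EE$: it writes $E$ as an increasing union of compact null sets $E_n$, chooses $g$ recursively so that each $E_n$ is covered by $\cyl{T_n}$ with $T_n\subs 2^{f{\circ}g(n+1)}$ of relative size less than $4^{-f{\circ}g(n)}$, and takes $G_n$ to be the traces of $T_n$ on the block $[f{\circ}g(n),f{\circ}g(n+1))$ --- so its witnesses constrain the whole block rather than a sub-block, and the proof is self-contained (in effect it reproves the base theorem with the prescribed block structure) at the cost of redoing the covering step for compact null sets. Your version buys modularity, delegating the measure-theoretic content to the cited base theorem and reducing the lemma to elementary re-blocking combinatorics; the only points worth making explicit are routine --- that the recursively defined $g$ is indeed nondecreasing and unbounded, and that degenerate blocks of $f_0$ (which force the corresponding $F^0_k$ to be empty) either occur only finitely often, so the $k_n$ can avoid them, or make $S(f_0,F^0)=\emptyset$ and hence $E=\emptyset$, a trivial case; this is what your parenthetical ``may be taken strictly increasing'' is quietly handling.
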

\begin{proof}
Let $E\in\EE$. We may suppose that $E_n\upto E$ with $E_n$'s compact.
It is easy to show that if $C\subs\Cset$ is a compact null set, then
$$
\forall\eps>0\ \fmany n\ \exists T\subs2^n\quad
  C\subs\cyl T\ \&\ \frac{\abs T}{2^n}<\eps
$$
Therefore we may recursively define $g\in\UPset$ in such a way that
$g(n+1)>g(n)$ and
\begin{equation}\label{eq:C}
  \exists T_n\subs2^{f\circ g(n+1)}\quad  E_n\subs\cyl{T_n}\ \&\
  \frac{\abs{T_n}}{2^{f\circ g(n+1)}}<\frac{1}{4^{f\circ g(n)}}.
\end{equation}
Write $h=f\circ g$.
For $n\in\Nset$ define $G_n=\{s\rest [h(n),h(n+1)):s\in T_n\}$.
Obviously $\abs{G_n}\leq\abs{T_n}$. Therefore \eqref{eq:C} yields
$\frac{\abs{G_n}}{2^{h(n+1)-h(n)}}
\leq\frac{1}{2^{h(n)}}\leq\frac{1}{2^n}$.
Thus $\seq{G_n}\in\CCC_{h}$ and since $E_n\upto E$, we also have
$E\subs S(h,G)$, as desired.
\end{proof}
\begin{lem}\label{Einc3}
Let $f,g\in\UPset$, $F\in\CCC_f$ and $G\in\CCC_{f{\circ}g}$. Then
$S(f,F)\subs S(f{\circ}g,G)$ if and only if
\begin{equation}\label{Einc}
  \fmany n\in\Nset\ \forall k\in[g(n),g(n+1))\quad F_k\subs\{z\rest[f(k),f(k+1)):z\in G_n\}.
\end{equation}
\end{lem}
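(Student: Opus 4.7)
The plan is to chase the definitions, writing $h = f{\circ}g$ throughout. The key combinatorial observation is that for $k \in [g(n), g(n+1))$ the interval $[f(k), f(k+1))$ is a subinterval of $[h(n), h(n+1))$; consequently, for every $z \in \Cset$,
\begin{equation*}
  z\rest[f(k), f(k+1)) = \bigl(z\rest[h(n), h(n+1))\bigr)\rest[f(k), f(k+1)).
\end{equation*}
This is what lets the defining conditions of $S(f, F)$ and $S(h, G)$ be compared on each $h$-block.

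For the forward implication I will argue by contraposition. Assume \eqref{Einc} fails: there is an infinite $N \subs \Nset$ such that for each $n \in N$ I may choose $k_n \in [g(n), g(n+1))$ together with $s_n \in F_{k_n}$ which is not the $[f(k_n), f(k_n+1))$-restriction of any element of $G_n$. Assuming $F_k \neq \emptyset$ cofinitely (otherwise $S(f, F) = \emptyset$ and the inclusion is vacuous), pick $t_k \in F_k$ for every large $k$ and define $z \in \Cset$ by setting $z\rest[f(k_n), f(k_n+1)) = s_n$ when $n \in N$, and $z\rest[f(k), f(k+1)) = t_k$ on the remaining $f$-blocks. Then $z \in S(f, F)$ by construction, but for each $n \in N$ the restriction $z\rest[h(n), h(n+1))$ carries $s_n$ on its sub-block indexed by $k_n$, so it cannot coincide with any $w \in G_n$; hence $z \notin S(h, G)$, contradicting the assumed containment.

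For the backward implication I take $z \in S(f, F)$ and must verify $z \in S(h, G)$. Pick $n_0$ large enough that (i) condition \eqref{Einc} holds for every $n \geq n_0$, and (ii) $z\rest[f(k), f(k+1)) \in F_k$ for all $k \geq g(n_0)$. For every such $n$ and every $k \in [g(n), g(n+1))$ the condition supplies some $w_k \in G_n$ with $w_k\rest[f(k), f(k+1)) = z\rest[f(k), f(k+1))$. What remains is to exhibit a single $w \in G_n$ realising the whole concatenation $z\rest[h(n), h(n+1))$.

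I expect the main obstacle to be precisely this final gluing step: the $w_k$'s delivered by \eqref{Einc} are a priori distinct elements of $G_n$, and pasting together their agreeing sub-blocks need not in general land in $G_n$. I anticipate the argument will exploit the sparseness $\abs{G_n} \leq 2^{h(n+1) - h(n) - n}$ built into $G \in \CCC_h$, or a selection trick that chooses the $w_k$'s coherently across $k \in [g(n), g(n+1))$; once this is in hand, the rest is routine book-keeping.
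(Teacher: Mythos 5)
Your proof of the implication from the inclusion $S(f,F)\subs S(f{\circ}g,G)$ to condition~\eqref{Einc} is, step for step, the paper's own argument: assuming \eqref{Einc} fails, you pick for infinitely many $n$ a $k_n\in[g(n),g(n+1))$ and a string $s_n\in F_{k_n}$ that is not the restriction of any member of $G_n$, fill the remaining $f$-blocks with arbitrary members of the $F_k$'s, and note that the resulting $z$ lies in $S(f,F)$ while failing the $S(f{\circ}g,G)$-condition on infinitely many blocks of $h=f{\circ}g$. (The nonemptiness caveat you raise is tacitly assumed in the paper as well and is immaterial where the lemma is applied.) What your proposal does not contain is a proof of the converse implication, from \eqref{Einc} to the inclusion: you reduce it to gluing the per-$k$ witnesses $w_k\in G_n$ into a single $w\in G_n$ agreeing with $z$ on all of $[h(n),h(n+1))$, and you leave that step open, hoping that the smallness of $G_n$ or a coherent selection will finish it. That is a genuine gap in the proposal as written.

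Your instinct about that step, however, deserves to be taken seriously: no gluing argument exists, because the converse implication is false as stated, and the paper's remark that ``the reverse implication is straightforward'' is in error. Take $g(n)=2n$, so that each $h$-block is the union of two consecutive $f$-blocks; let every $F_k$ consist of the single all-zero string on $[f(k),f(k+1))$, and let $G_n$ consist of the two strings on $[h(n),h(n+1))$ that vanish on one of the two $f$-sub-blocks and are identically one on the other. For $f(k)=k^2$, say, the cardinality requirements of $\CCC_f$ and $\CCC_{f{\circ}g}$ hold, and \eqref{Einc} holds for \emph{every} $n$, since each set $\{w\rest[f(k),f(k+1)):w\in G_n\}$ contains the zero string; yet the zero sequence belongs to $S(f,F)$ while its restriction to every $h$-block is all-zero and hence in no $G_n$, so $S(f,F)\not\subs S(f{\circ}g,G)$. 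The lemma becomes true (and your gluing problem disappears) if \eqref{Einc} is strengthened to quantify over concatenations: for all but finitely many $n$, every $s\in2^{[h(n),h(n+1))}$ with $s\rest[f(k),f(k+1))\in F_k$ for each $k\in[g(n),g(n+1))$ belongs to $G_n$. With this condition the converse is immediate (the required witness is $z\rest[h(n),h(n+1))$ itself), your contrapositive argument adapts verbatim (place a bad $s$ on infinitely many whole $h$-blocks), and the subsequent application --- showing that $\widetilde X=\{x\in\Cset:x+S(f,F)\subs S(f{\circ}g,G)\}$ is $F_\sigma$ --- survives, since for each $n$ the modified condition on $x$ is determined by finitely many coordinates of $x$ and so defines a clopen set.
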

\begin{proof}
Suppose condition~\eqref{Einc} fails. Then there is $I\in[\Nset]^\Nset$ such that
\begin{equation}\label{Einc2}
\forall n\in I\ \exists k_n\in[g(n),g(n+1))\ \exists z_{k_n}\in F_{k_n}\
  \forall z\in G_n\ z_{k_n}\nsubseteq z.
\end{equation}
For each $k\notin\{k_n:n\in I\}$ choose $z_k\in F_k$ and let $z\in\Cset$ be a sequence that extends
simultaneously all $z_k$'s (including those defined in~\eqref{Einc2}).
Then obviously $z\in S(f,F)$.
On the other hand, condition~\eqref{Einc2} ensures that
$z\notin S(f{\circ}g,G)$.
Thus $S(f,F)\subs S(f{\circ}g,G)$ yields \eqref{Einc}.
The reverse implication is straightforward.
\end{proof}

\begin{prop}\label{EisEsharp}
If $X\subs\Cset$ is $\EE$-additive, then $X$ is sharply $\EE$-additive.
\end{prop}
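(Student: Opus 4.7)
My plan is to mimic the block-combinatorial argument of Proposition~\ref{MeShelah} using the Bartoszy\'nski--Shelah combinatorial description of $\EE$ supplied by Lemmas~\ref{2.6.3} and~\ref{Einc3}. Let $E\in\EE$. First I use Lemma~\ref{2.6.3} to pick $f\in\UPset$ and $G\in\CCC_f$ with $E\subs S(f,G)$, and I may assume each $G_k$ is nonempty (the case $E=\emptyset$ is trivial). Since $X$ is $\EE$-additive, $X+S(f,G)\in\EE$, so a second application of Lemma~\ref{2.6.3} with starting function $f$ yields $g\in\UPset$ (which I arrange to satisfy $g(n)\geq n$) and $G'\in\CCC_{f\circ g}$ with $X+S(f,G)\subs S(f\circ g,G')$.

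Next I re-express $S(f,G)$ on the coarser partition of $\Nset$ determined by $f\circ g$. For each $n$ set
$$H_n=\bigl\{z_{g(n)}\concat z_{g(n)+1}\concat\dots\concat z_{g(n+1)-1}:z_k\in G_k\bigr\}\subs 2^{[fg(n),fg(n+1))}.$$
A routine size count using $g(n)\geq n$ gives $H\in\CCC_{f\circ g}$, and the equality $S(f,G)=S(f\circ g,H)$ is immediate from the definitions. I then define
$$F=\{y\in\Cset:\fmany n,\ y\rest[fg(n),fg(n+1))+H_n\subs G'_n\}=\bigcup_{m\in\Nset}F_m,$$
where $F_m$ is defined by the same condition quantified over all $n\geq m$. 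Each $F_m$ is closed (the conditions constrain disjoint blocks of $y$), so $F$ is $F_\sigma$, hence \si compact. Checking $F+E\in\EE$ is then straightforward: for $y\in F$ and $s\in S(f\circ g,H)=S(f,G)$, almost every block index $n$ satisfies both $s\rest[fg(n),fg(n+1))\in H_n$ and $y\rest[fg(n),fg(n+1))+H_n\subs G'_n$, whence $y+s\in S(f\circ g,G')$; so $F+E\subs S(f\circ g,G')\in\EE$.

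The main obstacle is the inclusion $X\subs F$, i.e., upgrading the global statement $x+S(f,G)\subs S(f\circ g,G')$ (available from the hypothesis) to the uniform block statement $\fmany n,\ x\rest[fg(n),fg(n+1))+H_n\subs G'_n$. I will argue by contradiction: if this fails for some $x\in X$, then infinitely many ``bad'' $n$ admit $h_n\in H_n$ with $x\rest[fg(n),fg(n+1))+h_n\notin G'_n$. I splice these into a single $s\in\Cset$ by setting $s\rest[fg(n),fg(n+1))=h_n$ at the bad $n$'s and letting $s\rest[f(k),f(k+1))$ be an arbitrary element of $G_k$ elsewhere (possible since every $G_k\neq\emptyset$). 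This $s$ lies in $S(f,G)$ by construction, yet $(x+s)\rest[fg(n),fg(n+1))=x\rest[fg(n),fg(n+1))+h_n\notin G'_n$ for infinitely many $n$, contradicting $x+s\in S(f\circ g,G')$.
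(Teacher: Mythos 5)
Your proof is correct and follows essentially the same route as the paper: both use the Bartoszy\'nski--Shelah representation together with Lemma~\ref{2.6.3} applied to $X+S(f,G)\in\EE$, and both take as the \si compact superset a set defined by a blockwise inclusion condition that is visibly $F_\sigma$ and forces $F+E\subs S(f\circ g,G')\in\EE$. The only real difference is bookkeeping: the paper defines $\widetilde X=\{x\in\Cset:x+S(f,F)\subs S(f{\circ}g,G)\}$ and invokes Lemma~\ref{Einc3} to see that it is $F_\sigma$, whereas you define $F$ directly by the block condition (after merging the $f$-blocks into $f\circ g$-blocks via $H$) and re-prove the needed direction of Lemma~\ref{Einc3} inline through your splicing argument in order to get $X\subs F$.
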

\begin{proof}
Suppose $X$ is $\EE$-additive. Let $E\in\EE$.
We are looking for a \si compact set $\widetilde X\sups X$ such that
$\widetilde X+E\in\EE$.

There are $f\in\UPset$ and $F\in\CCC_f$ such that
$E\subs S(f,F)$. Since $S(f,F)\in\EE$, we have $X+S(f,F)\in\EE$. By Lemma~\ref{2.6.3} there are
$g$ and $G\in\CCC_{f{\circ}g}$
such that $X+S(f,F)\subs S(f{\circ}g,G)$, i.e.,
$x+S(f,F)\subs S(f{\circ}g,G)$ for all $x\in X$.

The set $\widetilde X$ we are looking for is
$$
  \widetilde X=\{x\in\Cset:x+S(f,F)\subs S(f{\circ}g,G)\}.
$$
Obviously $X\subs\widetilde X$. It is also obvious that
$\widetilde X+E\subs\widetilde X+S(f,F)\subs S(f{\circ}g,G)\in\EE$.
Thus it remains to show that $\widetilde X$ is $F_\sigma$.

For any $x\in\Cset$ and $k\in\Nset$ set
$$
  F_k^x=\{z+x\rest[f(k),f(k+1)):z\in F_k\}
$$
and consider the sequence $F^x=\seq{F_k^x}$.
Clearly $F^x\in\CCC_f$ and $S(f,F^x)=x+S(f,F)$.
Therefore $\widetilde X=\{x\in\Cset:S(f,F^x)\subs S(f{\circ}g,G)\}$.
Use Lemma~\ref{Einc3} to conclude that
$$
  x\in\widetilde X\Leftrightarrow
  \fmany n\in\Nset\ \forall k\in[g(n),g(n+1))\ F_k^x\subs\{z\rest[f(k),f(k+1)):z\in G_n\}.
$$
It follows that $\widetilde X$ is $F_\sigma$ as long as the sets
$$
  A_{n,k}=\{x\in\Cset:F_k^x\subs\{z\rest[f(k),f(k+1)):z\in G_n\}\}
$$
are closed for all $n$ and all $k\in[g(n,g(n+1))$.
Fix $n\in\Nset$ and $k\in[g(n),g(n+1))$. Decoding the definitions yields
$$
  x\in A_{n,k}\Leftrightarrow
  \exists y\in2^{[f(k),f(k+1))}\quad y\subs x\ \&\
  \forall z\in F_k\ \exists t\in G_n\
  z+y\subs t.
$$
Since the set
$\{y\in2^{[f(k),f(k+1))}:\forall z\in F_k\ \exists t\in G_n\ z+y\subs t\}$
is finite, the set $A_{n,k}$ is closed, as required.
We are done.
\end{proof}
\noindent
The proof of Theorems~\ref{mainME1} and~\ref{mainME2} is now complete.
Here are a few consequences.
First of them is Theorem~\ref{thm:ex2}:
$X\subs\Cset$ is $\MM$-additive if and only if it is $\EE$-additive.
\begin{coro}
Let $f:\Cset\to\Cset$ be a continuous mapping.
If $X\subs\Cset$ is $\MM$-additive, then so is $f(X)$.
\end{coro}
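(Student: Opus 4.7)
The plan is to route through the fractal side via Theorem~\ref{mainME1}. The meager-additive hypothesis on $X$ is \emph{a priori} an algebraic statement which is not obviously preserved by an arbitrary continuous self-map of $\Cset$ (indeed, continuous images can badly distort algebraic structure). But the equivalence with \ssmz{} converts the problem into one about uniformly continuous images of metric spaces, and there the preservation is immediate.

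First I would invoke Theorem~\ref{mainME1}, which gives that $X$ is \ssmz{}. Next, because $\Cset$ is compact, the continuous map $f:\Cset\to\Cset$ is automatically uniformly continuous; its restriction to $X$ is therefore uniformly continuous as a map of metric spaces. Proposition~\ref{trivUH}(ii) then yields that $f(X)$ is \ssmz{}. Since $f(X)\subs\Cset$, a second application of Theorem~\ref{mainME1} in the reverse direction gives that $f(X)$ is $\MM$-additive, which is what was to be shown.

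There is essentially no obstacle to overcome here: the whole content of the corollary is already packaged in the equivalence (\ref{mainME1})\eqref{uhnull}$\Leftrightarrow$\eqref{Madd} together with the trivial fact that continuous maps on the compact space $\Cset$ are uniformly continuous. This is precisely the sort of payoff the fractal characterization was designed to deliver: a purely algebraic invariance statement reduces to the observation that the \ssmz{} ideal is stable under uniformly continuous images.
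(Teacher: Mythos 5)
Your argument is correct and is exactly the intended one: the paper states this as an immediate consequence of Theorem~\ref{mainME1}, with the proof being precisely the route you describe — $X$ \ssmz{} by the equivalence, $f$ uniformly continuous by compactness of $\Cset$, $f(X)$ \ssmz{} by Proposition~\ref{trivUH}(ii), and then back to $\MM$-additivity via Theorem~\ref{mainME1}. No gaps.
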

The following is a little surprising.
\begin{coro}
If $X\subs\Cset$ is $\EE$-additive, then $\phi(X\times E)\in\EE$
for each $E\in\EE$ and every Lipschitz mapping $\phi:\Cset\times\Cset\to\Cset$.
\end{coro}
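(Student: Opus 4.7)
The plan is to follow the template of the proof of Theorem~\ref{mainME1}\eqref{uhnull}$\Implies$\ref{mainME2}\eqref{Eadd}, substituting the arbitrary Lipschitz $\phi$ for the $1$-Lipschitz addition map. First, Theorem~\ref{mainME2} reduces the assumption: $\EE$-additivity of $X$ is equivalent to $X$ being \ssmz{}. Then, endowing $\Cset\times\Cset$ with the maximum metric~\eqref{maxmetric}, Theorem~\ref{prodUhnull} --- more precisely, the $\gamma$-groupable cover built in the proof of its implication (i)$\Implies$(ii), combined with Lemma~\ref{gammagr} --- yields $X\times E\in\NNs(\uhm_0^1)$.

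Next I push this membership through $\phi$. Writing $X\times E=\bigcup_n A_n$ with $\uhm_0^1(A_n)=0$, the same routine that proves Lemma~\ref{ulipschitz}(ii) shows $\uhm_0^1(\phi(A_n))\leq L\,\uhm_0^1(A_n)=0$, where $L$ is any Lipschitz constant for $\phi$; hence $\phi(X\times E)=\bigcup_n\phi(A_n)\in\NNs(\uhm_0^1)$ in $\Cset$. The proof then finishes by the identity $\NNs(\uhm_0^1)=\EE$ in $\Cset$ already invoked in the proof of Theorem~\ref{mainME1}: if $\uhm_0^1(B)=0$, then $B$ is totally bounded by Lemma~\ref{lem1}(i), so $\overline B$ is compact with $\hm^1(\overline B)=\uhm_0^1(\overline B)=\uhm_0^1(B)=0$ by Lemma~\ref{lem1}(ii)(iii); that is, $\overline B$ is a compact Haar-null set, and countable unions of such sets are precisely the generic elements of $\EE$.

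Every needed ingredient is already on the shelf, so no serious obstacle arises; the one point worth attention is extracting $X\times E\in\NNs(\uhm_0^1)$ rather than merely $\uhm^1(X\times E)=0$. Without this $\sigma$-ideal form, a Lipschitz push-forward would only deliver $\uhm^1(\phi(X\times E))=0$, which is a priori weaker than membership in $\EE$: the latter demands an $F_\sigma$ Haar-null hull, whereas $\uhm^1=0$ only supplies $F_\sigma$ hulls of arbitrarily small Haar measure. The $\gamma$-groupable cover produced by the proof of Theorem~\ref{prodUhnull} is precisely what secures this stronger form and thereby the corollary.
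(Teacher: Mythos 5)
Your argument is correct, and its skeleton is the paper's: reduce $\EE$-additivity to \ssmz{} via Theorem~\ref{mainME2}, make $X\times E$ small, push the product through the Lipschitz map, and land in $\EE$. The difference is where you place the smallness, and it is worth recording. The paper's own two-line proof quotes Theorem~\ref{prodUhnull}(iv) to get $\uhm^1(X\times E)=0$, applies Lemma~\ref{ulipschitz}(ii), and then concludes $\uhm^1(\phi(X\times E))=0$, ``i.e.''\ $\phi(X\times E)\in\EE$; you instead extract the stronger fact $X\times E\in\NNs(\uhm_0^1)$ from the $\gamma$-groupable cover constructed in the proof of Theorem~\ref{prodUhnull}, push it forward at the level of $\uhm_0^1$ (finite covers go to finite covers), and finish with the identification $\NNs(\uhm_0^1)=\EE$ via Lemma~\ref{lem1}(i)--(iii). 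Your extra care is not pedantry: $\uhm^1(B)=0$ only provides \si compact hulls of arbitrarily small Haar measure (Lemma~\ref{lem1}(v)), and, for instance, a dense $G_\delta$ Haar-null subset of $\Cset$ satisfies $\uhm^1(B)=0$ (open subsets of $\Cset$ are \si compact) while lying outside $\EE$, since members of $\EE$ are meager. So the printed ``i.e.''\ is really an implicit appeal to the conclusion $X\times E\in\NNs(\uhm_0^1)$ available inside the proof of Theorem~\ref{prodUhnull} --- exactly the route the paper itself takes when proving that \ssmz{} sets are $\EE$-additive in the proof of Theorems~\ref{mainME1} and~\ref{mainME2}, and the route you reproduce. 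In short: same approach, but your version spells out, and correctly justifies, the step the published proof compresses, and your closing remark accurately identifies why the compressed form, read literally, would not suffice.
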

\begin{proof}
Let $E\in\EE$.
Since $X$ is $\EE$-additive, it is \ssmz{}. By Theorem~\ref{prodUhnull}(iv),
$\uhm^1(X\times E)=0$. By Lemma~\ref{ulipschitz}(ii), $\uhm^1(\phi(X\times E))=0$,
i.e., $\phi(X\times E)\in\EE$.
\end{proof}
The notion of $\MM$-additive sets extends to finite cartesian powers of $\Cset$
in the obvious manner.
\begin{coro}\label{cpowers}
A set $X\subs(\Cset)^n$ is $\MM$-additive if and only if it is \ssmz{}.
\end{coro}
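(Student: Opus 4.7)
The plan is to reduce to the one-dimensional case, Theorem~\ref{mainME1}, via the standard identification of $(\Cset)^n$ with $\Cset$. Define the interleaving map $\phi:(\Cset)^n \to \Cset$ by $\phi(x_0,\dots,x_{n-1})(kn+i)=x_i(k)$ for $k\in\Nset$ and $0\leq i<n$. The first step is to observe that $\phi$ is simultaneously a topological group isomorphism (addition is coordinatewise modulo~$2$ on both sides) and a homeomorphism of the two compact metric spaces; in particular both $\phi$ and $\phi^{-1}$ are uniformly continuous.

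The second step is to establish two transfer facts: (a)~$X \subs (\Cset)^n$ is $\MM$-additive if and only if $\phi(X)$ is $\MM$-additive in $\Cset$; (b)~$X$ is \ssmz{} if and only if $\phi(X)$ is \ssmz{}. For (a), meager sets correspond under $\phi$ to meager sets in either direction (being a purely topological notion), and the group operation is preserved, so the defining condition ``$X+M\in\MM$ for every meager~$M$'' transfers at once: writing any meager $M'\subs\Cset$ as $\phi(M)$ for the meager $M=\phi^{-1}(M')\subs(\Cset)^n$, one has $\phi(X)+M'=\phi(X+M)$, which is meager in~$\Cset$ iff $X+M$ is meager in~$(\Cset)^n$. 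For (b), $\phi$ and $\phi^{-1}$ are uniformly continuous, so Proposition~\ref{trivUH}(ii) transports \ssmz{} in both directions.

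With (a) and (b) in hand, the corollary follows immediately by chaining the equivalences through Theorem~\ref{mainME1} applied to $\phi(X) \subs \Cset$: $X$ is $\MM$-additive iff $\phi(X)$ is iff $\phi(X)$ is \ssmz{} iff $X$ is. I do not foresee a genuine obstacle. The only substantive thing to verify---and the closest to a ``hard'' step---is the invariance of meager-additivity under a topological group isomorphism; this is essentially transparent once one writes out the images and preimages, as above. All metric-level work is absorbed by the fact that $\phi$ is a homeomorphism between compacta, so no Hölder or Lipschitz estimates between the max metric on $(\Cset)^n$ and the least-difference metric on $\Cset$ need be made explicit.
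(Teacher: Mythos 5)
Your proof is correct, but it follows a genuinely different route from the paper's. You transfer everything through the canonical interleaving isomorphism $\phi:(\Cset)^n\to\Cset$, which is simultaneously a topological group isomorphism and a homeomorphism of compacta (hence uniformly continuous both ways), so $\MM$-additivity transfers because meagerness is topological and $\phi(X+M)=\phi(X)+\phi(M)$, while \ssmz{} transfers by Proposition~\ref{trivUH}(ii) applied to $\phi$ and $\phi^{-1}$; then you quote Theorem~\ref{mainME1} for $\phi(X)$. The paper instead argues through the coordinate projections $X_1,X_2$ of $X$: a Kuratowski--Ulam argument shows that $\MM$-additivity of $X$ in $(\Cset)^2$ gives $\MM$-additivity of each projection in $\Cset$, hence each is \ssmz{}, and Theorem~\ref{productHUH}(i) makes $X_1\times X_2\sups X$ \ssmz{}; conversely, \ssmz{} of $X$ passes to the projections (Lipschitz images, Proposition~\ref{trivUH}(ii)), so they are $\MM$-additive, and Weiss's theorem that a product of two $\MM$-additive subsets of $\Cset$ is $\MM$-additive finishes the argument. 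Your approach is shorter and avoids both Kuratowski--Ulam and the external product theorem of Weiss, exploiting the special fact that $(\Cset)^n$ and $\Cset$ are isomorphic as topological groups. What the paper's projection-and-product scheme buys is generality: it is exactly the template reused for Theorem~\ref{weiss22} on $\Rset^n$, where no isomorphism with the one-dimensional group exists, and it also feeds directly into the subsequent corollary characterizing $\MM$-additivity of planar sets by their orthogonal projections. So your argument is a valid and cleaner proof of this particular corollary, but it does not replace the paper's method in the Euclidean setting.
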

\begin{proof}
We provide the argument for $n=2$. Let $X\subs(\Cset)^2$.
Denote by $X_1, X_2$ the two projections of $X$.
An easy application of Kuratowski-Ulam Theorem proves that if $X$ is $\MM$-additive
in $(\Cset)^2$, then both $X_1$ and $X_2$ are $\MM$-additive in $\Cset$.
(\emph{Hint:} If $M\subs\Cset$ is meager, then $X+M\times\Cset$ is meager
in $(\Cset)^2$. Since $(X_1+M)\times\Cset=X+M\times\Cset$,
the set $X_1+M$ is meager, as required.)
Therefore they are \ssmz{} and by Theorem~\ref{productHUH}(i) $X_1\times X_2$
is \ssmz{}. A fortiori, $X$ is \ssmz{}.

Now suppose $X$ is \ssmz{}. Then both $X_1$ and $X_2$, being Lipschitz images of $X$,
are by Proposition~\ref{trivUH}(ii) also \ssmz{} and thus $\MM$-additive in $\Cset$.
By~\cite[Theorem 1]{MR2545840} a product of two $\MM$-additive sets in $\Cset$
is $\MM$-additive. Therefore $X_1\times X_2$ and a fortiori $X$ is $\MM$-additive.
\end{proof}

\section{Remarks}
\label{sec:rem}

\subsection*{\ssmz{} sets on the line}
It is clear how the notion of $\MM$-additive set extends to other topological
groups.
The addition operations on $\Cset$ and on the real line $\Rset$ are so different
that is was not understood for a long time if $\MM$-additive sets on $\Rset$
behave the same way as those on $\Cset$. Finally
Weiss~\cite{MR3241126} found the following solution.
Let $T:\Cset\to[0,1]$ be the standard mapping defined by
$T(x)=\sum_{n\in\Nset}2^{-n-1}x(n)$.
\begin{thm}[{\cite[1,10]{MR3241126}}]\label{weiss1}
A set $X\subs[0,1]$ is $\MM$-additive if and only $T^{-1}(X)$ is $\MM$-additive
in $\Cset$.
\end{thm}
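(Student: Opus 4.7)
The plan is to bridge the two sides via the \ssmz{} characterization established in Theorem~\ref{mainME1}, which says $\MM$-additivity in $\Cset$ coincides with \ssmz{}. First I would record two easy metric facts about $T$: it is $1$-Lipschitz (if $x,y\in\Cset$ first differ at position $k$, then $\abs{T(x)-T(y)}\leq\sum_{n\geq k}2^{-n-1}=2^{-k}=d(x,y)$); and for any $V\subs[0,1]$ with $\diam V\leq 2^{-k-1}$, the preimage $T^{-1}(V)$ is contained in the union of at most two cylinders of length $k$, each of $\Cset$-diameter $2^{-k}$. These two facts express that $T$ behaves like a Lipschitz map in both directions, the inverse direction being Lipschitz only modulo a bounded-multiplicity splitting.

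For the backward implication, assume $T^{-1}(X)$ is $\MM$-additive in $\Cset$. By Theorem~\ref{mainME1} it is \ssmz{}, and Proposition~\ref{trivUH}(ii) applied to the $1$-Lipschitz map $T$ gives that $X=T(T^{-1}(X))$ is \ssmz{} in $[0,1]$. From here one needs the real-line counterpart of the implication \ssmz{}$\Rightarrow$$\MM$-additive, which parallels Proposition~\ref{MeShelah} using a Shelah-type combinatorial description of $\MM$-additive sets on $\Rset/\Zset$ in place of Lemma~\ref{ShelahM}.

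For the forward implication, assume $X\subs[0,1]$ is $\MM$-additive. First invoke the same real-line analog of Theorem~\ref{mainME1} to conclude $X$ is \ssmz{} in $[0,1]$. Given $\Seqeps$, set $\eps'_n=\eps_{2n}/4$ and use Theorem~\ref{combUhnull} to obtain an $\seq{\eps'_n}$-fine $\gamma$-groupable cover $\{V_n\}$ of $X$ by intervals, witnessed by a partition $\{I_j\}$. For each $n$, replace $V_n$ by the at most two cylinders covering $T^{-1}(V_n)$, indexed by $2n$ and $2n{+}1$; the resulting sequence $\{W_n\}$ is $\seqeps$-fine, and the modified partition $\{\{2n,2n{+}1\}:n\in I_j\}$ witnesses that it is still $\gamma$-groupable, now covering $T^{-1}(X)$. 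Thus $T^{-1}(X)$ admits $\seqeps$-fine $\gamma$-groupable covers for every $\seqeps$, so Theorem~\ref{combUhnull} gives $T^{-1}(X)\in\ssmz$, and Theorem~\ref{mainME1} upgrades this to $\MM$-additivity.

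The main obstacle is clearly the real-line analog of Theorem~\ref{mainME1}, specifically the implication $\MM$-additive$\Rightarrow$\ssmz{} on $[0,1]$: this requires a Shelah-type combinatorial characterization of meager-additive sets on $\Rset$ (or $\Rset/\Zset$) that plays the role of Lemma~\ref{ShelahM}, together with a gauge-building argument analogous to Proposition~\ref{MeShelah}. Once that analog is in place, the transfer along $T$ reduces to the simple bookkeeping described above, using that $T$ is $1$-Lipschitz in one direction and has a $2$-to-$1$ Lipschitz splitting in the other.
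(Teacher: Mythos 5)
There is a genuine gap, and it sits exactly where you flag it. The paper does not prove Theorem~\ref{weiss1} at all: it is quoted from Weiss~\cite{MR3241126}, and the paper's own logical order then runs the other way --- Theorem~\ref{weiss2} (the real-line analog of Theorem~\ref{mainME1}, i.e.\ ``$\MM$-additive $\Leftrightarrow$ \ssmz{}'' on $[0,1]$) is \emph{derived from} Theorem~\ref{weiss1} together with Proposition~\ref{weiss4} and Theorem~\ref{mainME1}. Your argument uses that real-line analog in both directions (\ssmz{} $\Rightarrow$ $\MM$-additive on $[0,1]$ in the backward implication, $\MM$-additive $\Rightarrow$ \ssmz{} on $[0,1]$ in the forward one), so within the paper's framework it is circular, and as a standalone proof it reduces the theorem to an unproven claim that carries essentially all of the content. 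A Shelah/Pawlikowski-type combinatorial characterization of meager-additive subsets of $\Rset$ is not a routine transcription of Lemma~\ref{ShelahM}: the whole difficulty of Weiss's theorem is that addition on $[0,1]$ involves carries, whereas addition on $\Cset$ is coordinatewise mod $2$, so meagerness of sumsets does not transfer along $T$ by soft Lipschitz bookkeeping. Declaring that analog ``the main obstacle'' and leaving it unproved means the proposal does not establish the statement.

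The parts you do carry out are fine but are not the substance of this theorem: the $1$-Lipschitz estimate for $T$, the two-cylinder covering of $T^{-1}(V)$, and the doubling-and-regrouping of a $\gamma$-groupable cover together give precisely that $X$ is \ssmz{} iff $T^{-1}(X)$ is \ssmz{} --- that is the paper's Proposition~\ref{weiss4} (proved there via \cite[Lemma 3.5]{MR2177439} and Lemma~\ref{ulipschitz}), a statement about \ssmz{}, not about $\MM$-additivity. To prove Theorem~\ref{weiss1} itself you would need either Weiss's original argument or an independent proof of the real-line combinatorial characterization; neither is supplied here.
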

A similar (and much easier) result holds for \ssmz{} sets:
\begin{prop}\label{weiss4}
A set $X\subs[0,1]$ is \ssmz{} if and only $T^{-1}(X)$ is \ssmz{}.
\end{prop}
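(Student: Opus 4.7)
The plan is to use the combinatorial characterization of \ssmz{} from Theorem~\ref{combUhnull}: $\seqeps$-fine $\gamma$-groupable covers for every $\Seqeps$.

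For the easy direction ($\Leftarrow$), I first observe that $T$ is $1$-Lipschitz: if $x,x'\in\Cset$ agree on coordinates $0,1,\dots,n-1$, then $\abs{T(x)-T(x')}\leq\sum_{k\geq n}2^{-k-1}=2^{-n}=d(x,x')$. Hence, if $T^{-1}(X)$ is \ssmz{}, then $X=T(T^{-1}(X))$ is \ssmz{} by Proposition~\ref{trivUH}(ii).

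For the hard direction ($\Rightarrow$), suppose $X$ is \ssmz{} and let $\Seqeps$. I need to exhibit an $\seqeps$-fine $\gamma$-groupable cover of $T^{-1}(X)$. The strategy is to cover $X$ by small intervals via the \ssmz{} property and pull back under $T$, splitting each preimage into boundedly many pieces to accommodate the ramification of $T$. Specifically, for each $m$ set $\eta_m=\min\{\eps_{4m+i}:i<4\}$ and let $k_m$ be the least integer with $2^{-k_m}\leq\eta_m$. By \ssmz{} of $X$ there is a $\seq{2^{-k_m}}$-fine $\gamma$-groupable cover $\seq{V_m}$ of $X$, witnessed by a partition $\seq{I_j}$ of $\Nset$ into consecutive intervals. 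Each $V_m$ lies in a closed interval of length $2^{-k_m}$, which meets at most two consecutive dyadic intervals $J=[l2^{-k_m},(l+1)2^{-k_m}]$. For each such $J$ one verifies $T^{-1}(J)=\cyl p\cup\{0.p^{-}111\dots,\,0.(p+1)000\dots\}$, where $p\in2^{k_m}$ is the binary expansion of $l$ (and the last two terms are omitted at boundary cases). Consequently $T^{-1}(V_m)$ is contained in the union of at most four sets of diameter at most $2^{-k_m}\leq\eta_m$: two cones and two singletons. Label these $U_m^0,U_m^1,U_m^2,U_m^3$ and set $W_{4m+i}=U_m^i$. Then $\diam W_{4m+i}\leq\eps_{4m+i}$, so $\seq{W_n}$ is $\seqeps$-fine; and with $I_j'=\{4m+i:m\in I_j,\,i<4\}$ (still consecutive intervals partitioning $\Nset$) the cover is $\gamma$-groupable: given $x\in T^{-1}(X)$ and $y=T(x)$, for almost all $j$ some $m\in I_j$ satisfies $y\in V_m$, hence $x\in\bigcup_{i<4}W_{4m+i}\subseteq\bigcup_{n\in I_j'}W_n$.

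The main obstacle is precisely the ramification of $T$ at dyadic rationals: the naive expectation that ``$T^{-1}$ of a small interval is a single cone'' is false because of the alternative binary expansions of dyadic rationals interior to the interval. Splitting into four pieces per cover element (two cones accounting for the two dyadic intervals meeting $V_m$, plus two singletons for the exceptional boundary preimages) sidesteps this issue cleanly, and the groupability structure survives the bounded-to-one reindexing.
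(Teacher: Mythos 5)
Your proof is correct, but it takes a different route than the paper. The paper's argument is a two-line measure computation: it quotes the Tsaban--Weiss lemma \cite[Lemma 3.5]{MR2177439}, which says $T^{-1}(U)$ splits into two sets of diameter at most $\diam U$ for \emph{any} $U\subs[0,1]$, deduces $\uhm^h(X)\leq\uhm^h(T^{-1}(X))\leq2\uhm^h(X)$ for every gauge $h$ (the left inequality from $1$-Lipschitzness of $T$ via Lemma~\ref{ulipschitz}), and concludes through the measure characterization of Theorem~\ref{basicUhnull}(ii). You instead work entirely with the covering characterization of Theorem~\ref{combUhnull}: you prove the needed splitting of $T^{-1}$ of a small set by hand (two cones plus two exceptional points coming from the alternative expansions at the endpoints of the two relevant dyadic intervals, i.e.\ four pieces per cover element rather than the cited two), and then check that the bounded-to-one reindexing $m\mapsto\{4m+i:i<4\}$ preserves $\seqeps$-fineness and $\gamma$-groupability; your easy direction via Proposition~\ref{trivUH}(ii) is the same in substance as the paper's Lipschitz inequality. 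What the paper's route buys is brevity, at the cost of an external citation and of routing everything through $\uhm^h$; yours is self-contained and makes the ramification of $T$ at dyadic rationals completely explicit, at the cost of the groupability bookkeeping. Two cosmetic points: your hull of $V_m$ "meets at most two consecutive dyadic intervals" should read "is contained in the union of at most two consecutive closed dyadic intervals" (in the aligned degenerate case it touches a third at an endpoint, which is harmless since that endpoint lies in the middle interval), and if one insists that a $\gamma$-groupable cover consist of subsets of the space, intersect your cones and singletons with $T^{-1}(X)$, which changes nothing.
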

\begin{proof}
\cite[Lemma 3.5]{MR2177439} asserts that for any set $U\subs[0,1]$
there are sets $U_0,U_1$ such that $U_0\cup U_1=T^{-1}(U)$ and
$\diam U_i\leq\diam U$ for both $i=0,1$.
It follows that $\uhm^h(T^{-1}(X))\leq 2\uhm^h(X)$ for every gauge $h$.
On the other hand, since $T$ is $1$-Lipschitz, $\uhm^h(X)\leq\uhm^h(T^{-1}(X))$
by Lemma~\ref{ulipschitz}(ii).

Use the two inequalities and Theorem~\ref{basicUhnull}(ii) to conclude the proof.
\end{proof}
\begin{thm}\label{weiss2}
A set $X\subs\Rset$ is $\MM$-additive if and only if it is \ssmz{}.
\end{thm}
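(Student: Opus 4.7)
The plan is to pass from $\Rset$ to $[0,1]$ by a $\sigma$-additivity argument, then use the surjection $T\colon\Cset\to[0,1]$ to transfer the question to $\Cset$, where Theorem~\ref{mainME1} already supplies the desired equivalence.

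Both properties under consideration define translation-invariant \si ideals in $\Rset$. Translations of $\Rset$ are isometric homeomorphisms, hence preserve \ssmz{} by Proposition~\ref{trivUH}(ii) and plainly preserve $\MM$-additivity (as $t\mapsto t+s$ is a meager-preserving bijection). Countable unions preserve \ssmz{} by Proposition~\ref{trivUH}(i); they preserve $\MM$-additivity because $\bigl(\bigcup_n X_n\bigr)+M=\bigcup_n(X_n+M)$ is meager whenever each summand is. Writing $X=\bigcup_{n\in\mathbb Z}\bigl(X\cap[n,n+1]\bigr)$ and translating each piece by $-n$ into $[0,1]$, the problem reduces to the case $X\subseteq[0,1]$.

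For such $X$, the equivalence assembles from three results already in hand:
\[
  X\text{ is }\MM\text{-additive}
  \ \Leftrightarrow\ T^{-1}(X)\text{ is }\MM\text{-additive in }\Cset
  \ \Leftrightarrow\ T^{-1}(X)\text{ is \ssmz}
  \ \Leftrightarrow\ X\text{ is \ssmz,}
\]
by Theorem~\ref{weiss1}, Theorem~\ref{mainME1}, and Proposition~\ref{weiss4} respectively.

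The only nontrivial link in this chain is the first equivalence, which bridges the gap between the dyadic group structure of $\Cset$ and ordinary addition on $\Rset$; but this is precisely the content of Weiss's Theorem~\ref{weiss1} and is given. The rest is routine bookkeeping, and the main obstacle---had it not already been resolved by Weiss---would be exactly this transfer between the two group structures.
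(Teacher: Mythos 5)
Your proof is correct and follows essentially the same route as the paper: reduce to $X\subs[0,1]$ via \si additivity (and translation invariance), then chain Theorem~\ref{weiss1}, Theorem~\ref{mainME1}, and Proposition~\ref{weiss4} through the map $T$. You merely spell out the bookkeeping that the paper leaves implicit.
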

\begin{proof}
Since both properties are \si additive, we may clearly suppose $X\subs[0,1]$.
The proof is a straightforward
application of Theorem~\ref{mainME1} and the above theorem and proposition.
\end{proof}
This theorem extends to $\Rset^n$:
By~\cite[11]{MR3241126} a product of two $\MM$-additive sets on $\Rset$
is $\MM$-additive on $\Rset^2$. Using this fact and the above Theorem~\ref{weiss2}
one can repeat the proof of Corollary~\ref{cpowers} to show:
\begin{thm}\label{weiss22}
A set $X\subs\Rset^n$ is $\MM$-additive if and only if it is \ssmz{}.
\end{thm}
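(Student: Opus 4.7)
The plan is to mimic the proof of Corollary~\ref{cpowers} verbatim, replacing the $\Cset$-product result used there with the analogous Weiss result for $\Rset$ cited in the paragraph preceding the theorem, and replacing the $\Cset$-version of Theorem~\ref{mainME1} with Theorem~\ref{weiss2}. By \si additivity of both properties, I may pass to a bounded set if convenient, but in fact the argument does not need this.

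For the forward direction, suppose $X\subs\Rset^n$ is $\MM$-additive. Denote by $X_1,\dots,X_n$ the coordinate projections of $X$. A Kuratowski--Ulam argument (identical to the one sketched parenthetically in the proof of Corollary~\ref{cpowers}) shows that each $X_i\subs\Rset$ is $\MM$-additive: if $M\subs\Rset$ is meager then $M\times\Rset^{n-1}$ is meager in $\Rset^n$, so $X+M\times\Rset^{n-1}$ is meager in $\Rset^n$, and its projection onto the $i$-th coordinate, which contains $X_i+M$, is meager in $\Rset$ by Kuratowski--Ulam. Theorem~\ref{weiss2} then tells us each $X_i$ is \ssmz{}. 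Iterating Theorem~\ref{productHUH}(i), $X_1\times\dots\times X_n$ is \ssmz{}, and since $X\subs X_1\times\dots\times X_n$ and \ssmz{} is a \si ideal (Proposition~\ref{trivUH}(i)), $X$ itself is \ssmz{}.

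For the reverse direction, suppose $X\subs\Rset^n$ is \ssmz{}. Each projection $X_i$ is a Lipschitz image of $X$, hence \ssmz{} by Proposition~\ref{trivUH}(ii), and therefore $\MM$-additive in $\Rset$ by Theorem~\ref{weiss2}. By the result of~\cite{MR3241126} quoted just before the theorem, products of $\MM$-additive sets on $\Rset$ are $\MM$-additive on the cartesian square; iterating, $X_1\times\dots\times X_n$ is $\MM$-additive in $\Rset^n$. Since $\MM$-additivity is trivially hereditary (if $M\in\MM$ then $X+M\subs(X_1\times\dots\times X_n)+M\in\MM$), $X$ is $\MM$-additive.

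Nothing in this plan looks hard; the only delicate point is the Kuratowski--Ulam step in the forward direction, which must be invoked in the locally compact Polish group $\Rset^n$ rather than in $\Cset^n$, but the argument is word-for-word the same as in the hint of Corollary~\ref{cpowers}.
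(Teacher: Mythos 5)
Your proposal is correct and is essentially the paper's own argument: the paper proves Theorem~\ref{weiss22} exactly by repeating the proof of Corollary~\ref{cpowers}, with Theorem~\ref{weiss2} replacing Theorem~\ref{mainME1} and Weiss's product theorem for $\MM$-additive subsets of $\Rset$ replacing the one for $\Cset$. One cosmetic point: for the $i$-th projection you should add the meager cylinder $\Rset^{i-1}\times M\times\Rset^{n-i}$ and note that the sumset equals the cylinder $\Rset^{i-1}\times(X_i+M)\times\Rset^{n-i}$, to which Kuratowski--Ulam applies (a projection of a meager set need not be meager), exactly as in the parenthetical hint of Corollary~\ref{cpowers}.
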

\begin{coro}
Let $X\subs\Rset^2$. The following are equivalent.
\begin{enum}
\item $X$ is $\MM$-additive,
\item all orthogonal projections of $X$ on lines are $\MM$-additive,
\item at least two orthogonal projections of $X$ on lines are $\MM$-additive.
\end{enum}
\end{coro}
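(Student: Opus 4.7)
The strategy is to convert $\MM$-additivity to \ssmz{} via Theorem~\ref{weiss22} in $\Rset^2$ and Theorem~\ref{weiss2} in $\Rset$, and then to exploit that two orthogonal projections onto distinct lines assemble into a linear isomorphism of $\Rset^2$ onto the product of those lines.

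The easy half is (i)\Implies(ii)\Implies(iii). For (i)\Implies(ii), Theorem~\ref{weiss22} tells us $X$ is \ssmz{}; every orthogonal projection $\pi_L$ onto a line is $1$-Lipschitz, hence by Proposition~\ref{trivUH}(ii) each $\pi_L(X)$ is \ssmz{}, and Theorem~\ref{weiss2} converts this back to $\MM$-additivity in the line. The implication (ii)\Implies(iii) is trivial.

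The substance is in (iii)\Implies(i). Fix two distinct lines $L_1\neq L_2$ (by translation-invariance of both properties we may assume they pass through the origin) such that $\pi_{L_i}(X)$ is $\MM$-additive in $L_i$ for $i=1,2$. By Theorem~\ref{weiss2} each $\pi_{L_i}(X)$ is \ssmz{}, and by Theorem~\ref{productHUH}(i) the product $\pi_{L_1}(X)\times\pi_{L_2}(X)\subs L_1\times L_2$ is \ssmz{} as well. Consider the linear map
$$
  \Phi:\Rset^2\to L_1\times L_2,\qquad \Phi(v)=\bigl(\pi_{L_1}(v),\pi_{L_2}(v)\bigr).
$$
Since $L_1\neq L_2$ span $\Rset^2$, $\Phi$ is a linear isomorphism between finite-dimensional spaces, hence bi-Lipschitz (after equipping $L_1\times L_2$ with the maximum metric as in~\eqref{maxmetric}). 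Because $\Phi(X)\subs\pi_{L_1}(X)\times\pi_{L_2}(X)$, Proposition~\ref{trivUH}(i) gives that $\Phi(X)$ is \ssmz{}; since $\Phi^{-1}$ is Lipschitz, Proposition~\ref{trivUH}(ii) applied to $\Phi^{-1}$ yields that $X=\Phi^{-1}(\Phi(X))$ is \ssmz{}. One final appeal to Theorem~\ref{weiss22} delivers that $X$ is $\MM$-additive.

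No serious obstacle is anticipated: the machinery developed above (Theorems~\ref{productHUH}, \ref{weiss2} and~\ref{weiss22}, together with Proposition~\ref{trivUH}) does all the heavy lifting. The only geometric input is the elementary fact that two distinct one-dimensional subspaces of $\Rset^2$ span the plane and so the combined projection map is a bi-Lipschitz isomorphism.
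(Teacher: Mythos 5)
Your proof is correct and is essentially the derivation the paper intends: the corollary is placed right after Theorem~\ref{weiss22} precisely so that it follows by translating $\MM$-additivity into \ssmz{} (Theorems~\ref{weiss2} and~\ref{weiss22}), using that \ssmz{} is preserved by Lipschitz images and subsets (Proposition~\ref{trivUH}) and by products (Theorem~\ref{productHUH}(i)), together with the bi-Lipschitz linear map $v\mapsto(\pi_{L_1}(v),\pi_{L_2}(v))$ -- exactly your argument. One small caveat: your step ``$L_1\neq L_2$ span $\Rset^2$'' tacitly reads (iii) as requiring projections onto two \emph{non-parallel} lines (two distinct directions), which is the only reading under which the corollary is true, since projections onto distinct parallel lines are translates of one another (e.g.\ a vertical line in $\Rset^2$ has singleton projections onto two horizontal lines but is not $\MM$-additive); your reduction ``translate both lines to pass through the origin'' silently uses this.
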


\subsection*{$\gamma$-sets}
Nowik and Weiss~\cite[Proposition 3.7]{MR1905154} prove that every $\gamma$-set
of reals is $\MM$-additive.
We will show a generalization of this result, namely that all
$\gamma$-sets are \ssmz{}.

Recall the notion of $\gamma$-set, as introduced by
Gerlits and Nagy~\cite{MR667661}.
A family $\mc U$ of open sets in a separable metric space $X$ is called
an \emph{$\omega$-cover} of $X$ if every finite subset of $X$ is
contained in some $U\in\mc U$.
A metric space $X$ is a \emph{$\gamma$-set} if every $\omega$-cover of $X$
contains a $\gamma$-cover.

\begin{prop}\label{gamma}
Every $\gamma$-set is \ssmz{}.
\end{prop}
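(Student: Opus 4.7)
The plan is to verify the combinatorial characterization of \ssmz{} supplied by Theorem~\ref{combUhnull}: for each $\Seqeps$, I must exhibit an $\seqeps$-fine $\gamma$-groupable cover of $X$. I may assume $\seqeps$ is nonincreasing (otherwise replace $\eps_n$ by $\min_{k\leq n}\eps_k$, which only strengthens the required condition).

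First I would introduce the family $\mc U$ of all open sets $W\subs X$ that admit a decomposition $W=V_1\cup\dots\cup V_k$ with $k\geq 1$, each $V_i\subs X$ open and $\diam V_i\leq\eps_{a+i-1}$, for some consecutive block of indices $\{a,\dots,a+k-1\}\subs\Nset$; each such $W$ is understood to come equipped with a chosen decomposition, so that the parameters $(a_W,k_W)$ and the pieces $V^W_i$ are specified. Since $X$ is separable (being a $\gamma$-set), $\mc U$ is an $\omega$-cover of $X$: given a finite $F\subs X$, take $a=1$, $k=|F|$, and let each $V_i$ be a sufficiently small open ball around the $i$-th point of $F$.

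Next I would apply the standard Just--Miller--Scheepers--Szeptycki reformulation of the $\gamma$-set property, namely the selection principle $S_1(\Omega,\Gamma)$. For each $n\in\Nset$ let $\mc U_n$ be the collection of $W\in\mc U$ admitting a decomposition with $a\geq n$; each $\mc U_n$ is still an $\omega$-cover by the same construction with $a=n$ in place of $a=1$. The principle yields $W_n\in\mc U_n$, with decomposition $W_n=V^n_1\cup\dots\cup V^n_{k_n}$, $\diam V^n_i\leq\eps_{a_n+i-1}$ and $a_n\geq n$, such that $\{W_n:n\in\Nset\}$ is a $\gamma$-cover of $X$; in particular $a_n\to\infty$. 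A greedy extraction then produces a subsequence $\{W_{n_j}\}$ with $a_{n_{j+1}}\geq a_{n_j}+k_{n_j}$, so that the support intervals $J_j=[a_{n_j},a_{n_j}+k_{n_j}-1]$ are pairwise disjoint and strictly ordered; since any subsequence of a $\gamma$-cover is a $\gamma$-cover, $\{W_{n_j}\}$ still $\gamma$-covers $X$.

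Finally I would partition $\Nset$ into consecutive finite intervals by setting $I_0=[1,a_{n_1}-1]$ and $I_j=[a_{n_j},a_{n_{j+1}}-1]$ for $j\geq 1$, so that each $I_j\sups J_j$. Define $U_n=V^{n_j}_{n-a_{n_j}+1}$ for $n\in J_j$ and $U_n=\emptyset$ for $n\in I_j\setminus J_j$. Monotonicity of $\seqeps$ gives $\diam U_n\leq\eps_n$, and $\bigcup_{n\in I_j}U_n=W_{n_j}$, so the block unions form precisely the $\gamma$-cover $\{W_{n_j}\}$, producing the desired $\seqeps$-fine $\gamma$-groupable cover. The main obstacle is the selection step: applying the bare Gerlits--Nagy characterization directly to $\mc U$ can return a $\gamma$-subcover whose starting indices $a_W$ are all bounded (for instance the constant sequence $\{W,W,\ldots\}$ whenever $X\subs W$ for some single $W\in\mc U$), which blocks the disjoint-interval extraction. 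Routing through $S_1(\Omega,\Gamma)$ with the nested $\omega$-covers $\mc U_n$ forces $a_n\to\infty$, and this is precisely what makes the block reindexing feasible.
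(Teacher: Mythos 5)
Your argument is correct, but it takes a genuinely different route from the paper. You reduce, as the paper does, to the combinatorial characterization of \ssmz{} in Theorem~\ref{combUhnull}, but then you import the Gerlits--Nagy equivalence of the $\gamma$-property with the selection principle $S_1(\Omega,\Gamma)$ and apply it to the nested $\omega$-covers $\mc U_n$ of block-decomposed sets; the condition $a_n\geq n$ built into $\mc U_n$ is what drives the starting indices to infinity and makes the disjoint-block reindexing work (and, as you note, exact index matching even makes the monotonicity reduction on $\seqeps$ superfluous, since $\diam U_n\leq\eps_{a_{n_j}+i-1}=\eps_n$ on the nose). The paper instead stays entirely within a single application of the bare definition of $\gamma$-set: it fixes an infinite set $\{z_n\}\subs X$, forms the punctured unions $F^\circ=\bigcup_{x\in F}B(x,\tfrac12\del_{\abs F})\setminus\{z_{\abs F}\}$ with the cumulative diameter schedule $\del_n=\eps_{0+1+\dots+n}$, and uses the punctures to force the cardinalities of the selected finite sets to be unbounded, after which passing to a subsequence with strictly increasing cardinalities and enumerating the centers yields the $\seqeps$-fine $\gamma$-groupable cover directly. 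What each buys: your route is structurally cleaner and makes the ``indices must escape to infinity'' issue transparent, at the cost of invoking a nontrivial external theorem that should be cited explicitly (the paper never uses $S_1(\Omega,\Gamma)$); the paper's route is self-contained modulo the definition, at the cost of the slightly ad hoc puncture trick. Remaining quibbles in your write-up are cosmetic: the interval $I_0$ may be empty (merge it or drop it), the padding sets $U_n=\emptyset$ and the shift between indexing from $0$ versus $1$ are harmless, and the ``equipped decomposition'' device is just the standard convention of selecting, for each chosen $W_n\in\mc U_n$, a witnessing decomposition with $a\geq n$.
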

\begin{proof}
Let $X$ be an infinite $\gamma$-set.
Let $\Seqeps$. We are looking for an $\seqeps$-fine $\gamma$-groupable cover.
For $n\in\Nset$ define $\del_n=\eps_{0+1+2+\dots+n}$.
Fix an infinite set $\{z_n:n\in\Nset\}\subs X$.
For $n\in\Nset$ and $F\in[X]^n$ put
$$
  F^\circ=\bigcup\nolimits_{x\in F}B\bigl(x,\tfrac12\del_n\bigr)
  \setminus\{z_n\}.
$$
The family $\{F^\circ:F\in[X]^{<\Nset}\}$
is obviously an $\omega$-cover. Therefore there is a sequence $\seq{F_k}$
of finite sets such that $\seq{F_k^\circ}$ is a $\gamma$-cover.
If $\abs F=n$, then $F^\circ$ misses $z_n$.
It follows that the cardinalities of
$F_k$'s are unbounded. Thus we may choose a subsequence $\seq{k_m}$
such that $\abs{F_{k_0}}<\abs{F_{k_1}}<\abs{F_{k_2}}<\dots$.
The sequence $\seq{F_{k_m}^\circ:m\in\Nset}$ is still a $\gamma$-cover.

Write $j_m=\abs{F_{k_m}}$.
Form a sequence $\seq{x_i}$ as follows: First enumerate all points in $F_{k_0}$,
then continue with points of $F_{k_1}$ and so on.
Note that if $x_i\in F_{k_m}$, then $i\leq j_0+j_1+\dots j_m\leq 0+1+\dots+j_m$
and thus $\eps_i\geq\del_{j_m}$.
Consequently $F_{k_m}^\circ\subs\bigcup\{B(x_i,\frac{\eps_i}{2}):x_i\in F_{k_m}\}$
and it follows that the families $\mc G_m=\{B(x_i,\frac{\eps_i}{2}):x_i\in F_{k_m}\}$
are witnessing that
$\seq{B(x_i,\frac{\eps_i}{2})}$ is a $\seq{\eps_i}$-fine
$\gamma$-groupable cover.
\end{proof}

\subsection*{Scheepers Theorem}
A metric space $X$ has the \emph{Hurewicz Property} if for any sequence
$\seq{\mc U_n}$ of open covers there are finite families $\mc F_n\subs\mc U_n$
such that, letting $F_n=\bigcup\mc F_n$, the sequence $\seq{F_n}$ is a
$\gamma$-cover of $X$.

Scheepers ~\cite[Theorem 1, Lemma 3]{MR1779763} proved that a product of a \smz{} set
and a \smz{} set with Hurewicz property is \smz{}.
It is easy to show that a \smz{} space with the Hurewicz property
is \ssmz{}.
Therefore Theorem~\ref{productHUH}(ii) improves Scheepers' result.
We claim that it is a proper extension: since \ssmz{} is a uniform property and
Hurewicz property is topological, one cannot expect \emph{a priori} that
every \ssmz{} set has the Hurewicz property.
(It is of course so if Borel Conjecture holds.)
There indeed is a CH example:
\begin{prop}
Assuming the Continuum Hypothesis, there is a \ssmz{} set
that does not have the Hurewicz property.
\end{prop}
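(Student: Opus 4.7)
The plan is, under CH, to construct $X\subs\Cset$ of size $\aleph_1$ that is meager-additive (equivalently, by Theorem~\ref{mainME1}, \ssmz{}) but whose image under a fixed continuous surjection $\pi:\Cset\to\Pset$ is a dominating family. Any Hurewicz subspace of $\Pset$ is $\sigma$-bounded, and no countable family dominates $\Pset$, so no Hurewicz subspace of $\Pset$ is dominating; combined with the fact that the Hurewicz property passes to continuous images into second countable spaces, this will force $X$ to fail the Hurewicz property. Having the example in $\Cset$ is enough; transfer to $\Rset$, if desired, goes through Theorem~\ref{weiss2}.

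Enumerate $\UPset=\{\hat f_\beta:\beta<\omega_1\}$ and $\Pset=\{d_\alpha:\alpha<\omega_1\}$. By transfinite recursion of length $\omega_1$ I build the points $x_\alpha\in\Cset$ together with witnesses $g_\beta\in\Pset$ and $y_\beta\in\Cset$ (one pair per $\beta<\omega_1$, extended in countably many steps) to the Shelah characterization of Lemma~\ref{ShelahM}. At stage $\alpha$, for each $\beta\leq\alpha$ I extend $g_\beta,y_\beta$ past the previously committed segment: pick the next $g_\beta(n+1)$ so large that the block $[g_\beta(n),g_\beta(n+1))$ contains many $\hat f_\beta$-subintervals, and reserve one subinterval $[\hat f_\beta(k),\hat f_\beta(k+1))$ on which $x_\alpha$ will be forced to match $y_\beta$. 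These finitely many reserved subintervals (one per $\beta\leq\alpha$) occupy only a sparse portion of $\Nset$, leaving cofinitely many coordinates of $x_\alpha$ unconstrained; using that freedom I choose $x_\alpha$ so that $\pi(x_\alpha)\geq^* d_\alpha$. This last step is possible because the restriction of $\pi$ to the clopen set cut out by the reserved coordinates still surjects onto a cofinal subset of $\Pset$.

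The verification is essentially routine. For each $\beta<\omega_1$, every $x_\alpha$ with $\alpha\geq\beta$ satisfies the Shelah matching against $(\hat f_\beta,g_\beta,y_\beta)$ by construction, and the countably many $\alpha<\beta$ are absorbed into the ``$\fmany n$'' quantifier; Lemma~\ref{ShelahM} then gives meager-additivity, hence \ssmz{} via Theorem~\ref{mainME1}. Meanwhile $\pi(X)$ eventually dominates every $d_\alpha$, so $\pi(X)$ is dominating, therefore not Hurewicz, and therefore neither is $X$. The one genuine difficulty is the compatibility, at each stage, of the $\alpha+1$ many Shelah matching constraints with the single dominating requirement for $d_\alpha$; it is overcome by letting the block lengths $g_\beta(n+1)-g_\beta(n)$ grow so rapidly that only a vanishing fraction of each block is pinned down, leaving a generous pool of unconstrained coordinates in which to encode a $\pi$-preimage of a function eventually dominating $d_\alpha$.
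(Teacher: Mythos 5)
Your plan has several concrete breakdowns. First, the map you build everything around does not exist: $\Cset$ is compact, so any continuous $\pi:\Cset\to\Pset$ has compact, hence bounded, image; there is no continuous surjection of $\Cset$ onto $\Pset$, and in particular the claim that ``the restriction of $\pi$ to the clopen set cut out by the reserved coordinates still surjects onto a cofinal subset of $\Pset$'' is impossible (images of clopen subsets of $\Cset$ under a continuous map into $\Pset$ are compact, hence never cofinal). This could be patched by working with the gap-counting homeomorphism defined on $\Cset$ minus the eventually constant sequences, but then the relevant constraint set is not clopen and the argument has to be redone. Second, and more seriously, you misread Lemma~\ref{ShelahM}: the condition is $\fmany n\ \exists k$, i.e.\ for \emph{all but finitely many} $g_\beta$-blocks the point must copy $y_\beta$ on some $f_\beta$-subblock. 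Reserving ``one subinterval per $\beta\leq\alpha$'' secures only a single match and gives no meager-additivity at all; a correct construction needs infinitely many reservations per constraint, occurring in cofinally many regions, and these copied segments (whose contents were fixed at earlier stages and may contain $1$'s) cap infinitely many gap lengths of $x_\alpha$, which is exactly what threatens the domination requirement -- the ``one genuine difficulty'' you wave at is not addressed. Third, the bookkeeping is incoherent: $g_\beta\in\Pset$ and $y_\beta\in\Cset$ are countable objects and cannot be properly extended at every stage $\alpha\in[\beta,\omega_1)$; and the remark that the $x_\alpha$ with $\alpha<\beta$ ``are absorbed into the $\fmany n$ quantifier'' is a misuse of the quantifier, which ranges over $n$ for each fixed $x$ and cannot excuse exceptional points of $X$. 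Finally, note that your route aims at a strictly stronger conclusion than needed (a meager-additive set with dominating image, i.e.\ failure of Menger, not just of Hurewicz); whether that is attainable is itself a nontrivial question that your sketch does not settle.

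For comparison, the paper's proof is essentially a citation: under CH, Galvin and Miller produce a $\gamma$-set $X\subs\Cset$ concentrated on a countable set $D$; by Proposition~\ref{gamma} $X$ is \ssmz{}, hence so is the subset $X\setminus D$, while by Nowik--Scheepers--Weiss the set $X\setminus D$ fails the Hurewicz property. If you want a from-scratch transfinite construction instead, you must at least (i) fix $(g_\beta,y_\beta)$ in full at stage $\beta$, using that the countable set $\{x_\alpha:\alpha<\beta\}$ is meager-additive, and (ii) show that a new point can meet the cofinal matching demands of all the countably many triples $(f_\beta,g_\beta,y_\beta)$, $\beta\leq\alpha$, simultaneously while still failing Hurewicz in whatever way you encode -- none of which is routine.
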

\begin{proof}
It follows from~\cite[Theorem 1]{MR738943} and its proof that under
the Continuum Hypothesis there is a $\gamma$-set
$X\subs\Cset$ that is concentrated on a countable set $D$.
By Proposition~\ref{gamma}, $X$ is \ssmz{}. On the other hand,
as proved in~\cite[Theorem 20]{MR1610427}, the set $X\setminus D$ does
not have the Hurewicz property
and since it is a subset of $X$, it is \ssmz{}.
\end{proof}

\subsection*{Corazza's model}
Theorem~\ref{productHUH}(ii) also raises a question whether a space whose
product with any \smz{} set of reals is \smz{} has to be \ssmz{}.
The answer is consistently no.
A similar observation was noted without proof in~\cite{MR1610427}
and also in~\cite{MR3731016}.

We choose ``reals'' to refer to $\Cset$, since by Proposition~\ref{weiss4} it makes no
difference if we work in $\Cset$ or $\Rset$.
The following argument came out from a discussion with Tomasz Weiss.
Corazza~\cite{MR982239}
constructs a model of ZFC with the following properties.
Denote by $X$ the set of ground model reals.
\begin{enumerate}
\item[(a)] $X$ is not a meager set in the Corazza's extension.
\item[(b)] $\abs X=\omega_1$.
\item[(c)] A set of reals $Y$ in the extension is \smz{} if and only if $\abs Y\leq\omega_1$.
\end{enumerate}
By (a), $X$ is not $\MM$-additive and hence not \ssmz{}. By (b) and (c),
if $Y$ is any \smz{} set of reals, then $\abs{X\times Y}\leq\omega_1$.
Since $\Cset\times\Cset$ is uniformly homeomorphic to $\Cset$, $X\times Y$
is a uniformly continuous image of a set of cardinality at most $\omega_1$.
Such a set is \smz{} by (c) and thus $X\times Y$ is \smz{} as well.
We proved the following:
\begin{prop}\label{corazza}
In Corazza model there is a set $X\subs\Cset$ that is not \ssmz{}
and yet $X\times Y$ is \smz{} for each \smz{} set $Y\subs\Cset$.
\end{prop}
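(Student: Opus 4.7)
\emph{Proof plan.} I would take the witness set to be exactly $X$, the set of ground model reals appearing in the listed properties (a)--(c). The proof then splits into two independent verifications.

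For the failure of \ssmz{}, I invoke Theorem~\ref{mainME1}: a subset of $\Cset$ is \ssmz{} if and only if it is $\MM$-additive. Property (a) says $X$ is non-meager in the Corazza extension. Since the singleton $\{0\}\subs\Cset$ is meager (as $\Cset$ has no isolated points), and $X+\{0\}=X$ is non-meager, $X$ fails to be $\MM$-additive, and therefore fails to be \ssmz{}.

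For the product property, fix any \smz{} set $Y\subs\Cset$ in the extension. By (b) and (c), $\abs X=\omega_1$ and $\abs Y\leq\omega_1$, so $\abs{X\times Y}\leq\omega_1$. Choose a uniform homeomorphism $\phi:\Cset\times\Cset\to\Cset$ (for instance, the standard interleaving map). Then $\abs{\phi(X\times Y)}\leq\omega_1$, and (c) forces this image to be \smz{}. Since \smz{} is preserved by uniformly continuous maps and $\phi^{-1}$ is uniformly continuous, $X\times Y$ itself is \smz{}.

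The entire argument is bookkeeping around properties (a)--(c), Theorem~\ref{mainME1}, and the uniform equivalence of $\Cset\times\Cset$ with $\Cset$; no serious obstacle arises. The only micro-subtlety worth flagging is that ``non-meager'' must be parlayed into ``non-$\MM$-additive'', which is handled by translating $X$ by the meager set $\{0\}$.
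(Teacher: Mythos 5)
Your proposal is correct and follows essentially the same route as the paper: the same witness $X$ (ground model reals), non-meagerness (a) refuting $\MM$-additivity and hence \ssmz{} via Theorem~\ref{mainME1}, and the cardinality bound from (b),(c) combined with a uniform homeomorphism $\Cset\times\Cset\to\Cset$ and preservation of \smz{} under uniformly continuous maps to get $X\times Y$ \smz{}. The only difference is cosmetic: you spell out the translation-by-$\{0\}$ detail that the paper leaves implicit.
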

However, the following question remains unanswered:
\begin{question}
Is it consistent that there is a metric space $X$ such that $X\times Y$ is \smz{}
for every \smz{} metric space $Y$ and yet $X$ is not \ssmz{}?
\end{question}

\subsection*{Dimension inequalities}
By corollaries~\ref{hdimx} and~\ref{uhdimx}, if $Y$ is a \si compact metric space,
then $\hdim X\times Y=\hdim Y$ if $X$ is \smz{}, and $\uhdim X\times Y=\hdim Y$
if $X$ is \ssmz{}.

We note that the assumption of \si compactness imposed upon $Y$ cannot be dropped:
%
Tsaban and Weiss~\cite[Theorem 4]{MR2112732} construct,
under $\mathfrak p=\mathfrak c$, a $\gamma$-set $X$
(that is by Proposition~\ref{gamma} \ssmz) and a set $Y\subs\Rset$ such that
$\hdim Y=0$ and yet $\hdim X\times Y=1$.

\subsection*{Universally meager sets}
Recall that a separable metric space $E$ is termed \emph{universally meager}
\cite{MR1814112,MR2427418}
if for any perfect Polish spaces $Y,X$ such that $E\subs X$ and every
continuous one--to--one mapping $f:Y\to X$ the set $f^{-1}(E)$ is meager
in $Y$. We show that \ssmz{} sets are universally meager.
\begin{lem}\label{lemUM}
Let $X,Y,Z$ be perfect Polish spaces and $\phi:Y\to X$ a continuous one--to--one mapping.
Let $\mc F$ be an equicontinuous family of uniformly continuous mappings of $Z$ into $X$.
If $E\subs Z$ is \ssmz{}, then there is a \si compact set
$F\sups E$ such that $\phi^{-1}f(F)$ is meager in $Y$ for all $f\in\mc F$.
\end{lem}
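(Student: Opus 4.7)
The plan is to combine the Borel-like characterization of \ssmz{} (Theorem~\ref{combUhnull}) with the observation that $\phi$, being injective and continuous from a perfect Polish space $Y$, must send every non-empty open $B\subs Y$ to a non-\ssmz{} subset of $X$. To verify this key observation, note that $B$ contains a homeomorphic copy $K$ of $\Cset$, so $\phi(K)\subs\phi(B)$ is a Cantor set in $X$, and any homeomorphism $\phi(K)\to(\Cset,d)$ is automatically uniformly continuous (compact Hausdorff); since $\hdim\Cset=1$, Theorem~\ref{basicHnull} shows $\phi(K)$ is not \smz{}, hence not \ssmz{}, and since \ssmz{} is hereditary (Proposition~\ref{trivUH}(i)), neither is $\phi(B)$.

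First I would fix a countable base $\{B_m:m\in\Nset\}$ for $Y$. By the above together with Theorem~\ref{combUhnull}, for each $m$ there is a nonincreasing $\seq{\eps_n^{(m)}}\in(0,\infty)^\Nset$ such that $\phi(B_m)$ admits no $\seq{\eps_n^{(m)}}$-fine $\gamma$-groupable cover. I would then diagonalize by setting $\delta_n=\min_{m\leq n}\eps_n^{(m)}$; any $\seq{\delta_n}$-fine $\gamma$-groupable cover of $\phi(B_m)$, after discarding the first $m$ sets and reindexing, becomes $\seq{\eps_n^{(m)}}$-fine (by monotonicity) and still $\gamma$-groupable, contradicting the choice. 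Letting $g$ be a common modulus of equicontinuity for $\mc F$, I choose $\seq{\eps_n}$ with $g(\eps_n)\leq\delta_n$ for all $n$.

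Next I would apply Theorem~\ref{combUhnull} to $E$ to obtain an $\seq{\eps_n}$-fine $\gamma$-groupable cover $\seq{U_n}$ of $E$ with witnessing partition $\Nset=\bigcup_jI_j$. Using \ssmz{} applied to the identity map on $E$, I enclose $E$ in a \si compact $K_0\subs Z$, and replace $U_n$ by the closure $\tilde U_n$ of $U_n\cap K_0$ taken in $K_0$; the new family still $\gamma$-groupably covers $E$ with diameters at most $\eps_n$, and each $\tilde U_n$ is closed in $K_0$. I set
$$
  F=\bigcup_{i\in\Nset}\bigcap_{j\geq i}\bigcup_{n\in I_j}\tilde U_n,
$$
which is $F_\sigma$ in $K_0$, hence \si compact, and $E\subs F$ by the witnessing-group property.

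Finally I would verify meagerness. The set $\phi^{-1}f(F)$ is the continuous preimage of the \si compact set $f(F)$, so it is $F_\sigma$ in $Y$, and it is therefore enough to show it has empty interior. If $\phi(B_m)\subs f(F)$ for some $m$, then the inclusion $f(F)\subs\bigcup_i\bigcap_{j\geq i}\bigcup_{n\in I_j}f(\tilde U_n)$ together with $\diam f(\tilde U_n)\leq g(\eps_n)\leq\delta_n$ forces the family $\seq{f(\tilde U_n)}$, grouped by $\{I_j\}$, to be a $\seq{\delta_n}$-fine $\gamma$-groupable cover of $\phi(B_m)$, contradicting the diagonalization. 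The delicate point will be arranging this uniformity in $f\in\mc F$: the same diagonal sequence $\seq{\delta_n}$ must be bad for every $\phi(B_m)$ at once, and the common modulus $g$ is what lets a single $\seq{\eps_n}$ control all images $f(\tilde U_n)$ simultaneously across $\mc F$.
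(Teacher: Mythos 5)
Your proof is correct, and it shares the paper's overall skeleton --- fix a countable base of $Y$, show that $\phi$ maps every nonempty open set to a non-\ssmz{} set, diagonalize over the base, bring in a common modulus $g$ for $\mc F$, produce a single \si compact $F\sups E$ from the \ssmz{}-ness of $E$, and finish with a Baire category argument --- but you implement it with genuinely different machinery. The paper stays in the language of gauges: non-\ssmz{}-ness of $\phi(U_n)$ is recorded as $\uhm^{h_n}(\phi U_n)>0$, the diagonalization is a single gauge $h\prec h_n$ for all $n$, the hull $F$ with $\uhm^{h\circ g}(F)=0$ comes from Lemma~\ref{lem1}(iv), and the image estimate from Lemma~\ref{ulipschitz}, so the whole argument takes a few lines. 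You instead run everything through the combinatorial characterization of Theorem~\ref{combUhnull}: the witnesses are bad sequences $\seq{\eps^{(m)}_n}$, the diagonalization is $\delta_n=\min_{m\leq n}\eps^{(m)}_n$ (your monotonization and tail-discarding step is fine, since dropping finitely many groups of a $\gamma$-cover leaves a $\gamma$-cover), and $F$ is built by hand from a groupable cover inside a \si compact hull $K_0$, exactly as in the proof of Theorem~\ref{prodUhnull}, (i)$\Rightarrow$(ii). Your verification that $\phi(B)$ is not \ssmz{} via an embedded Cantor set and $\hdim\Cset=1$ is also more elementary than the paper's appeal to the fact that an uncountable analytic set contains a perfect set and hence carries a gauge of positive upper Hausdorff measure. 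What the gauge route buys is brevity; what your route buys is that it needs only the Borel-like characterization and no measure-theoretic lemmas, at the cost of extra bookkeeping (nonincreasing $\eps^{(m)}$, reindexing, closures in $K_0$), all of which you handle correctly.
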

\begin{proof}
Let $\{U_n\}$ be a countable base for $Y$.
As $\phi$ is one--to--one the set $\phi(U_n)$ is analytic and
uncountable for each $n$. Therefore it contains a perfect set and thus is not \ssmz{},
i.e., there is a gauge $h_n$ such that $\uhm^{h_n}(\phi U_n)>0$.
Choose a gauge $h$ such that $h\prec h_n$ for all $n$, so that
$\uhm^h(\phi U_n)>0$ for all $n$.
Therefore $\uhm^h(\phi U)>0$ for each nonempty set $U$ open in $Y$.

Since $\mc F$ is equicontinuous, there is a gauge $g$
such that~\eqref{lip2} is satisfied by each $f\in\mc F$.
By Theorem~\ref{basicUhnull} $E\in\NNs(\uhm_0^{h{\circ}g})$. Therefore there is a \si compact set
$F\sups E$ such that $\uhm^{h{\circ}g}(F)=0$. Hence Lemma~\ref{ulipschitz} guarantees that
$\uhm^h(f(F))=0$ for all $f\in\mc F$. Therefore the $F_\sigma$-set $\phi^{-1}f(F)$
is meager in $Y$: for otherwise
it would contain an open set witnessing $\uhm^h(f(F))>0$.
\end{proof}
Apply this lemma with $Z=X$ and $\mc F=\{\id_X\}$ to get

\begin{prop}\label{umg}
Every \ssmz{} set is universally meager.
\end{prop}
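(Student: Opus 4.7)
The plan is to invoke Lemma~\ref{lemUM} directly, as the remark preceding the statement already suggests. Fix an \ssmz{} set $E$, suppose $E\subs X$ for some perfect Polish space $X$, let $Y$ be a perfect Polish space, and let $\phi:Y\to X$ be a continuous one-to-one mapping. The goal is to show that $\phi^{-1}(E)$ is meager in $Y$.

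I would apply Lemma~\ref{lemUM} with $Z=X$, the given map $\phi$, and $\mc F=\{\id_X\}$. The family $\mc F$ consists of a single uniformly continuous mapping (the identity on $X$) and is therefore trivially equicontinuous, so the hypotheses of the lemma are met. The set $E\subs Z$ is \ssmz{} by assumption, and the lemma then produces a \si compact set $F\sups E$ such that $\phi^{-1}\id_X(F)=\phi^{-1}(F)$ is meager in $Y$. Since $\phi^{-1}(E)\subs\phi^{-1}(F)$, we conclude that $\phi^{-1}(E)$ is meager in $Y$, which is exactly the definition of $E$ being universally meager.

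There is essentially no obstacle here: all the real work was carried out in Lemma~\ref{lemUM}, whose proof combined the gauge-comparison trick (finding a single $h$ with $\uhm^h(\phi(U))>0$ for every nonempty open $U\subs Y$) with the composition estimate $\uhm^h(f(F))\leq\uhm^{h\circ g}(F)$ from Lemma~\ref{ulipschitz}. The present proposition is just the $Z=X$, $\mc F=\{\id_X\}$ specialization of that lemma.
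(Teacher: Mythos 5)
Your argument is correct and is exactly the paper's proof: the proposition is obtained by applying Lemma~\ref{lemUM} with $Z=X$ and $\mc F=\{\id_X\}$, just as you do. Nothing is missing.
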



\subsection*{Meager additive sets in topological groups}
There is an obvious question: how far beyond $\Cset$ and $\Rset$ we can extend
the equivalence of $\MM$-additive and \ssmz{}.
\begin{question}
For what Polish groups are the notions of $\MM$-additive and \ssmz{} equivalent?
\end{question}

\subsection*{Null-additive sets}
A set $X\subs\Cset$ is termed \emph{null-additive} if for every Haar null set $N$
the set $X+N$ is Haar null.
In a follow-up of the present paper we will show that null-additive sets in $\Cset$
can be described in terms of packing measures and dimensions.

\section{Appendix: Hausdorff measures on cartesian products}

In this appendix, we prove a few integral inequalities needed for the proof
of Lemma~\ref{uhowroyd}. They generalize those proved by Howroyd
in his famous Thesis~\cite{howroydPhD} and Kelly~\cite{MR0318427}.

We need a notion of a weighted Hausdorff measure. Let $X$ be a metric space.
Say that a countable collection of pairs $\{(c_i,E_i):i\in I\}$ is a
\emph{weighted cover} of $E\subs X$ if $c_i>0$ and $E_i\subs X$ for all $i\in I$
and $\sum\{c_i:x\in E_i\}\geq 1$ for all $x\in E$.
We say it is $\del$-fine in the cover $\{E_i:i\in I\}$ is $\del$-fine,
i.e., if $\diam E_i\leq\del$ for all $i\in I$.

Let $g$ be a gauge and $E\subs X$. For each $\delta>0$ set
$$
  \whm^g_\delta(E)=
  \inf\left\{\sum_{n\in\Nset}c_i g(\diam E_i):
  \text{$\{(c_i,E_i)\}$ is a $\del$-fine weighted cover of $E$}\right\}
$$
and put
$
  \whm^g(E)=\sup_{\delta>0}\whm^g_\delta(E).
$

Properties of the weighted Hausdorff measures are discussed, e.g.,
in the two mentioned papers~\cite{howroydPhD,MR0318427}. Trivially
$\whm^g\leq\hm^g$. The converse inequality holds if $g$ satisfies
the doubling condition. It was proved in~\cite[2.10.24]{MR0257325}
for compact sets and in~\cite[9.8]{howroydPhD} in full generality.
\begin{thm}[{\cite[9.8]{howroydPhD}}]\label{HwH}
If $g$ is a doubling gauge, then $\whm^g=\hm^g$.
\end{thm}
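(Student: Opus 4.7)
The trivial direction $\whm^g\leq\hm^g$ is immediate: any $\delta$-fine cover $\{E_i\}$ yields the $\delta$-fine weighted cover $\{(1,E_i)\}$ with the same $g$-sum, so $\whm_\delta^g(E)\leq\hm_\delta^g(E)$ for every $\delta>0$. All the work is in the reverse inequality $\hm^g\leq\whm^g$, and the strategy is to convert any $\delta$-fine weighted cover into a $C\delta$-fine ordinary cover of comparable $g$-sum, then squeeze the loss to $1$ as $\delta\to 0$ using the definition of doubling.

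The plan is as follows. Fix $\eta>0$ and, using $\varlimsup_{r\to 0}g(2r)/g(r)<\infty$, choose $\delta_0>0$ and a constant $D$ so that $g(2r)\leq D\,g(r)$ for all $r\leq\delta_0$; this $D$ will govern all further losses. Let $\{(c_i,E_i)\}_{i\in I}$ be a $\delta$-fine weighted cover of $E$ with $\delta<\delta_0$ and $\sum c_i\,g(\diam E_i)<\whm_\delta^g(E)+\eta$. First I would enlarge each $E_i$ to a closed ball $B_i\sups E_i$ with $\diam B_i\leq 2\diam E_i$, incurring the cost $g(\diam B_i)\leq D\,g(\diam E_i)$, so one may assume from the outset that the $E_i$ are balls whose diameters lie in dyadic shells $I_n=\{i:2^{-n-1}<\diam E_i\leq 2^{-n}\}$.

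Next comes the core combinatorial step: extracting an \emph{ordinary} subcover from a weighted one using doubling. For each $x\in E$ the inequality $\sum_{x\in E_i}c_i\geq 1$ distributes mass across the shells $I_n$; choose for each $x$ a level $n(x)$ and a finite subfamily $\mc F_n\subs I_n$ that already covers $\{x:n(x)=n\}$, then use a Vitali-type selection within each $\mc F_n$ (where all balls have comparable diameter $\sim 2^{-n}$) to extract a disjointified subfamily whose $D^3$-enlargements still cover. Summing over $n$ and invoking the weight condition $\sum_{x\in E_i}c_i\geq 1$ yields an ordinary $C\delta$-fine cover $\{F_j\}$ of $E$ with $\sum g(\diam F_j)\leq C_D\bigl(\sum c_i\,g(\diam E_i)\bigr)$, where $C_D$ depends only on $D$.

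The main obstacle is upgrading the mere comparability $\hm^g\leq C_D\,\whm^g$ to strict equality. The standard device is to note that the effective doubling constant can be made arbitrarily close to $\varlimsup_{r\to 0}g(2r)/g(r)$ by taking $\delta$ small, and a more delicate bookkeeping (processing shells $I_n$ from coarsest to finest and charging only the \emph{incremental} cover of the uncovered remainder against the weighted mass at that scale) reveals that the conversion factor tends to $1$ as $\delta\to 0^+$. Passing to the supremum over $\delta$ on both sides then gives $\hm^g(E)\leq\whm^g(E)$, completing the proof. If that bookkeeping proves too delicate, the fallback is Howroyd's dual characterization $\whm^g(E)=\sup\{\mu(E):\mu\text{ Borel, }\mu(F)\leq g(\diam F)\}$, from which the identity $\whm^g=\hm^g$ for doubling $g$ follows by constructing, for each $s<\hm^g(E)$, a compact $K\subs E$ and a mass distribution on $K$ witnessing $\whm^g(E)\geq s$; this is essentially Federer's argument from 2.10.24 extended by Howroyd to noncompact $E$ through inner regularity.
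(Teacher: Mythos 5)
The paper does not actually prove this statement: it is quoted verbatim from Howroyd's thesis \cite[9.8]{howroydPhD}, with \cite[2.10.24]{MR0257325} credited for the compact case, so the only question is whether your sketch stands on its own, and it does not. The easy inequality $\whm^g\leq\hm^g$ is fine, but the core combinatorial step for $\hm^g\leq\whm^g$ has two concrete failures. First, the theorem concerns arbitrary subsets of an arbitrary metric space; only the gauge is doubling, the space is not assumed metrically doubling. Your Vitali-type charging needs each level-$n$ set $E_i$ to meet only boundedly many of the selected pairwise disjoint balls of radius comparable to $2^{-n}$; in a general metric space a set of diameter $2^{-n}$ can contain infinitely many points that are pairwise $2^{-n-1}$-separated, so the multiplicity is unbounded and the estimate $\sum_j g(\diam F_j)\leq C_D\sum_i c_i\,g(\diam E_i)$ does not follow. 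Second, the step ``choose for each $x$ a level $n(x)$'' tacitly requires a single dyadic scale to carry a fixed positive fraction of the unit mass at $x$; when the mass is spread over all scales you can only select a level carrying a fraction that decays with $n$, and the resulting level-dependent losses are not absorbed into one constant $C_D$.

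Even if comparability $\hm^g\leq C_D\,\whm^g$ were secured, the content of the theorem is equality, and your device for removing the constant does not work: $\varlimsup_{r\to0}g(2r)/g(r)$ is a fixed number, typically strictly larger than $1$ (it equals $2^s$ for $g(r)=r^s$), so letting $\delta\to0$ does not drive the doubling losses, let alone the Vitali enlargement losses, to $1$; asserting that ``more delicate bookkeeping reveals the factor tends to $1$'' is exactly the missing proof, and it is where the real difficulty of Federer's and Howroyd's arguments lies. The fallback is circular in effect: the Frostman-type statement that for every $s<\hm^g(E)$ there is a measure $\mu$ with $\mu(F)\leq g(\diam F)$ for all $F$ and $\mu(E)\geq s$, with no constant loss, is essentially the theorem you are asked to prove, and the proposed extension ``through inner regularity'' is unavailable for arbitrary sets in arbitrary (possibly incomplete or nonseparable) metric spaces, where compact subsets of nearly full $\hm^g$-measure need not exist. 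The paper's own treatment---citing \cite[9.8]{howroydPhD} as a black box---is the honest alternative unless you reproduce that argument in full.
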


The integrals in the following inequalities are the usual upper Lebesgue integrals:
If $\mu$ is a Borel measure on a metric space and $f:X\to[-\infty,\infty]$
a function, then
$$
 \upint f \,d\mu
 =\inf\left\{\int\phi \,d\mu:
 \phi\geq f\text{ Borel measurable}\right\}.
$$
Let $X,Y$ be metric spaces. Denote their respective metrics by $d_X$ and $d_Y$.
Recall that the cartesian product $X\times Y$ is equipped by the maximum metric
\eqref{maxmetric}.
For a set $E\subs X\times Y$ and $x\in X$, write $E_x$ for the vertical section
$\{y\in Y:(x,y)\in E\}$.

\begin{thm}\label{thm:appendix}
Let $X,Y$ be metric spaces and $E\subs X\times Y$. Let $g,h$ be gauges.
Then
\begin{enum}
\item $\upint\whm^g(E_x)\,d\whm^h(x)\leq\whm^{gh}(E)$,
\item $\upint\hm^g(E_x)\,d\whm^h(x)\leq\hm^{gh}(E)$.
\end{enum}
\end{thm}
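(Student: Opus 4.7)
The plan is to follow the projection strategy of Howroyd and Kelly, adapted to the weighted premeasures used here. I treat (i) in detail; (ii) is parallel, with an ordinary (unweighted) cover of $E$ replacing the weighted one.

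Fix $\delta,\eps>0$ and choose a $\delta$-fine weighted cover $\{(c_i,E_i):i\in I\}$ of $E$ with
$$\sum_i c_i(gh)(\diam E_i)<\whm^{gh}_\delta(E)+\eps\leq\whm^{gh}(E)+\eps.$$
Let $A_i=\pi_X(E_i)$ and $B_i=\pi_Y(E_i)$; both have diameter at most $\diam E_i\leq\delta$. The first key observation is that for every $x\in X$, the family $\{(c_i,B_i):x\in A_i\}$ is a $\delta$-fine weighted cover of the vertical section $E_x$: if $(x,y)\in E$, then $\sum\{c_i:(x,y)\in E_i\}\geq 1$, and every contributing index satisfies $x\in A_i$ and $y\in B_i$. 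This yields the pointwise bound
$$\whm^g_\delta(E_x)\leq\psi_\delta(x):=\sum_i c_i\,g(\diam B_i)\,\chi_{A_i}(x).$$

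The second key observation is that the singleton $\{(1,A_i)\}$ is itself a $\delta$-fine weighted cover of $A_i$, so $\whm^h_\delta(A_i)\leq h(\diam A_i)\leq h(\diam E_i)$. Integrating the pointwise bound against $\whm^h_\delta$ and using countable subadditivity of the upper integral together with $\upint\chi_A\,d\whm^h_\delta=\whm^h_\delta(A)$,
$$\upint\whm^g_\delta(E_x)\,d\whm^h_\delta(x)\leq\sum_i c_i\,g(\diam B_i)\,\whm^h_\delta(A_i)\leq\sum_i c_i(gh)(\diam E_i)<\whm^{gh}(E)+\eps.$$
Letting $\eps\to 0$ gives
$$\upint\whm^g_\delta(E_x)\,d\whm^h_\delta(x)\leq\whm^{gh}(E)\quad\text{for every }\delta>0.\qquad(\ast)$$

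It remains to pass to the limit $\delta\downarrow 0$. Since $\whm^g_\delta(E_x)\uparrow\whm^g(E_x)$ pointwise and the Borel regular outer measures satisfy $\whm^h_\delta\uparrow\whm^h$ set-wise, a Beppo--Levi-type argument applied simultaneously to the integrand and to the integrating measure upgrades $(\ast)$ to $\upint\whm^g(E_x)\,d\whm^h(x)\leq\whm^{gh}(E)$, establishing (i). For (ii) one takes an ordinary $\delta$-fine cover $\{E_i\}$ of $E$ near-optimal for $\hm^{gh}_\delta(E)$, projects, observes that $\{B_i:x\in A_i\}$ is a $\delta$-fine ordinary cover of $E_x$ so that $\hm^g_\delta(E_x)\leq\sum_i g(\diam B_i)\chi_{A_i}(x)$, and integrates against $\whm^h_\delta$ as above to obtain $\upint\hm^g(E_x)\,d\whm^h(x)\leq\hm^{gh}(E)$.

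The main obstacle is the monotone convergence step from $(\ast)$ to the stated inequality. Since $x\mapsto\whm^g_\delta(E_x)$ need not be Borel measurable, the standard monotone convergence theorem does not apply verbatim; one has to approximate from above by Borel functions $\phi_\delta$ with $\int\phi_\delta\,d\whm^h_\delta$ close to $\upint\whm^g_\delta(E_\cdot)\,d\whm^h_\delta$ and then verify that the two monotone limits commute. The structural reason the estimate works is that by conducting the argument at fixed scale $\delta$, the quantity $\whm^h_\delta(A_i)$ is controlled by $h(\diam A_i)$ --- a control that breaks down entirely if one tries to work with $\whm^h(A_i)$, which can be arbitrarily large compared to the diameter of $A_i$.
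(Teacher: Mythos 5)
Your fixed-scale computation is exactly the right core idea (it is the same projection device the paper uses: weight the projected sets $p_X(E_i)$ by $c_i\,g(\diam p_Y(E_i))$), but the way you assemble it into the theorem has a genuine gap, concentrated in the two steps you treat as routine: integrating against $\whm^h_\del$ and then letting $\del\to0$ ``simultaneously in the integrand and the integrating measure''. The set function $\whm^h_\del$ is not a Borel measure --- it is merely a countably subadditive outer measure for which Borel sets need not be measurable --- so $\upint\cdot\,d\whm^h_\del$ is not defined by the paper's definition of the upper integral, and whichever surrogate you choose you lose one of the properties you invoke: with a Choquet-type definition you get $\upint\chi_A\,d\whm^h_\del=\whm^h_\del(A)$ but countable subadditivity of the integral fails for non-submodular outer measures (easy finite counterexamples exist), while with an ``infimum over weighted majorants'' definition subadditivity holds by fiat but the comparison with the genuine Lebesgue upper integral against $\whm^h$ in the limit is precisely what has to be proved. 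Worse, there is no monotone convergence theorem for a family of non-additive set functions $\whm^h_\del\uparrow\whm^h$ paired with possibly non-measurable integrands $\whm^g_\del(E_x)\uparrow\whm^g(E_x)$: your $(\ast)$ gives, for each $\del$, a near-optimal weighted majorant depending on $\del$, and the passage to the limit requires an interchange of a supremum (over $\del$) with an infimum (over majorants/covers) that is exactly the content of the theorem, not a consequence of Beppo--Levi.

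The paper's proof is arranged specifically to avoid this. It first reduces, via Borel regularity and approximation of the integrand from below by simple functions, to the distribution inequality: if $\whm^g(E_x)>\gamma$ for all $x\in X$ then $\whm^{gh}(E)\geq\gamma\,\whm^h(X)$. The scale problem (the $\del$ for which $\whm^g_\del(E_x)>\gamma$ depends on $x$) is then handled by fixing $\eps>0$ and choosing a \emph{single} $\del$ together with a subset $\hat X\subs X$ with $\whm^h_\del(\hat X)>\whm^h(X)-\eps$ on which $\whm^g_\del(E_x)>\gamma$ holds uniformly; the projected family $\{(\tfrac{c_i}{\gamma}g(\diam p_Y(E_i)),\,p_X(E_i))\}$ is then literally a $\del$-fine weighted cover of $\hat X$, so the bound $\gamma\whm^h_\del(\hat X)\leq\whm^{gh}_\del(E)$ is read off directly from the definition of $\whm^h_\del$ as an infimum over weighted covers --- no integration against a premeasure and no limit interchange ever occurs. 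To repair your write-up you would need to import this reduction (or prove an ad hoc lemma substituting for the missing convergence theorem); as it stands, the last paragraph acknowledges the obstacle but does not overcome it, and the preceding display already uses unjustified properties of $\upint\cdot\,d\whm^h_\del$.
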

\begin{proof}
First of all, we may assume that $E$ is Borel, since both Hausdorff
and weighted Hausdorff measures are Borel regular.
It is also routine to check that if $E$ is Borel, then the integrands
$x\mapsto\whm^g(E_x)$ and $x\mapsto\hm^g(E_x)$ are Borel measurable.
Therefore both integrals are standard Lebesgue integrals.

We prove (i) first and then indicate how to get (ii) by the same proof.
Approximating the integrand from below by a simple function and replacing
$X$ with the projection of $E$ onto the $x$-axis reduces (i)
to the following:
\begin{equation}\label{appendix}
\text{If $\whm^g(E_x)>\gamma$ for all $x\in X$, then $\whm^{gh}(E)\geq\gamma\whm^h(X)$.}
\end{equation}
Fix $\eps>0$. For every $x\in X$ there is $\del_x>0$ such that
$\whm^g_\del(E_x)>\gamma$. Also $\whm^h$ can be approximated by $\whm^h_\del$.
Therefore there is $\del>0$ such that
and a set $\hat X\subs X$ such that
\begin{enumerate}
\item[(a)] $\whm^g_\del(E_x)>\gamma$ for all $x\in\hat X$,
\item[(b)] $\whm^h_\del(\hat X)>\whm^h(X)-\eps$.
\end{enumerate}
Let $\mc C=\{(c_i,E_i):i\in I\}$ be a $\del$-fine weighted cover of $E$.
Denote by $p_X$ and $p_Y$ the respective projections.
For each $i\in I$ let $d_i=\frac{c_i}{\gamma}g(\diam(p_Y(E_i)))$
and consider the family $\mc D=\{(d_i,p_X(E_i)):i\in I\}$.

For each $x\in\hat X$ we have
$$
  \sum_{x\in p_X(E_i)}d_i=\frac1\gamma\sum_{x\in p_X(E_i)}c_ig(\diam p_Y(E_i))
  \geq\frac1\gamma\sum_{(E_i)_x\neq\emptyset}c_ig(\diam (E_i)_x)
$$
and since the family $\{(c_i,(E_i)_x):(E_i)_x\neq\emptyset\}$ is
obviously a weighted $\del$-fine cover of $E_x$, the latter sum is estimated
from below by $\whm^g_\del(E_x)$. Therefore (a) and the above calculation shows that
$\sum_{x\in p_X(E_i)}d_i\geq1$ for all $x\in\hat X$, i.e.,
that $\mc D$ is a weighted $\del$-fine cover of $\hat X$.
Therefore
\begin{align*}
\whm^h_\del(\hat X)
  &\leq\sum d_i h(\diam(p_X(E_i)))
  \leq\sum \frac{c_i}{\gamma} g(\diam(p_Y(E_i)))h(\diam(p_X(E_i)))\\
  &\leq\frac1\gamma\sum c_ig(\diam E_i)h(\diam E_i)
  =\frac1\gamma\sum c_i(gh)(\diam E_i).
\end{align*}
Multiplying with $\gamma$ and taking the infimum over all
$\del$-fine weighted covers of $E$ yields
$\gamma\whm^h_\del(\hat X)\leq\whm^{gh}_\del(E)$.
It thus follows from (b) that
$$
  \gamma(\whm^h(X)-\eps)\leq\whm^h_\del(\hat X)
  \leq \whm^{gh}_\del(E)\leq \whm^{gh}(E)
$$
and \eqref{appendix} obtains on letting $\eps\to0$.

(ii) follows from this proof simply by imposing an extra condition:
require that $c_i=1$ for all $i\in I$.
\end{proof}

Combining this theorem with Theorem~\ref{HwH} yields
\begin{coro}
If $h$ is a doubling gauge, then
$\upint\hm^g(E_x)\,d\hm^h(x)\leq\hm^{gh}(E)$.
\end{coro}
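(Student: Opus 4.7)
The plan is essentially immediate: combine the two preceding results. First I would invoke Theorem~\ref{thm:appendix}(ii), which gives the inequality
$$
  \upint\hm^g(E_x)\,d\whm^h(x)\leq\hm^{gh}(E)
$$
with no doubling hypothesis imposed on $h$. All that remains is to swap the integrator $\whm^h$ for $\hm^h$, and this is precisely where the doubling assumption on $h$ gets used: Theorem~\ref{HwH} tells us that $\whm^h=\hm^h$ as set functions whenever $h$ is doubling, and in particular they coincide on the Borel $\sigma$-algebra.

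From the definition of the upper Lebesgue integral given just before Theorem~\ref{thm:appendix}, $\upint f\,d\mu$ depends on $\mu$ only through the ordinary Lebesgue integrals $\int\phi\,d\mu$ of Borel measurable majorants $\phi\geq f$. Two Borel measures that agree on all Borel sets therefore assign the same upper integral to every extended-real-valued function. Applying this observation to $f(x)=\hm^g(E_x)$ (whose Borel measurability, incidentally, is already established in the proof of Theorem~\ref{thm:appendix}) with $\mu=\whm^h=\hm^h$ gives
$$
  \upint\hm^g(E_x)\,d\hm^h(x)=\upint\hm^g(E_x)\,d\whm^h(x)\leq\hm^{gh}(E),
$$
which is the stated inequality.

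There is no real obstacle: the corollary is a one-line consequence of two results already in hand, and the only conceptual point is the (entirely routine) observation that equality of Borel measures forces equality of their upper integrals.
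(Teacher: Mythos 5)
Your proof is correct and is exactly the paper's argument: the paper derives this corollary by the one-line remark that it follows from combining Theorem~\ref{thm:appendix}(ii) with Theorem~\ref{HwH} (which, since $h$ is doubling, identifies $\whm^h$ with $\hm^h$ and hence the corresponding upper integrals). Your added remark that equal Borel measures yield equal upper integrals is the same routine observation the paper leaves implicit.
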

\begin{thm}
Let $X,Y$ be metric spaces and $E\subs X\times Y$. If $g,h$ are gauges,
then
\begin{enum}
\item
$\upint\uhm^g(E_x)\,d\whm^h(x)\leq\uhm^{gh}(E)$
\item
and if $g$ is doubling, then
$\upint\uhm^g(E_x)\,d\hm^h(x)\leq\uhm^{gh}(E)$.
\end{enum}
\end{thm}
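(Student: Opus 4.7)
My approach parallels the proof of Theorem~\ref{thm:appendix}, but with finite $\delta$-fine covers replacing countable (possibly weighted) ones, and with Munroe's Method~I construction bridging the gap between the pre-measure $\uhm_0^{gh}$ and the actual measure $\uhm^{gh}$. The argument naturally splits into a ``pre-measure'' step and a Method~I upgrade.

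\emph{Pre-measure step.} I would first establish that for every $A\subs X\times Y$,
\begin{equation*}
\upint \uhm_0^g(A_x)\,d\whm^h(x)\leq\uhm_0^{gh}(A).\tag{$*$}
\end{equation*}
This is obtained by adapting the proof of Theorem~\ref{thm:appendix}(i) to finite $\delta$-fine covers. Approximating the integrand from below by simple functions reduces $(*)$ to the pointwise statement: \emph{if $\uhm_0^g(A_x)>\gamma$ for every $x\in\hat X$, then $\gamma\whm^h(\hat X)\leq\uhm_0^{gh}(A)$.} After stratifying $\hat X$ by $\hat X^{(n)}=\{x\in\hat X:\uhm_{0,1/n}^g(A_x)>\gamma\}$ to secure a uniform choice of $\delta$, I would take a finite $\delta$-fine cover $\{E_i:i<N\}$ of $A$ and form the weighted family $\mc D=\{(d_i,p_X(E_i)):i<N\}$ with $d_i=\gamma^{-1}g(\diam p_Y(E_i))$. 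The bound $\uhm_{0,\delta}^g(A_x)>\gamma$ applied to the finite $\delta$-fine cover $\{(E_i)_x:x\in p_X(E_i)\}$ of $A_x$ forces $\sum_{x\in p_X(E_i)}d_i>1$, so $\mc D$ is a $\delta$-fine weighted cover of $\hat X^{(n)}$, giving $\whm^h_\delta(\hat X^{(n)})\leq\gamma^{-1}\sum_i(gh)(\diam E_i)$. Passing to infima over covers of $A$, then letting $\delta\to 0$ and $n\to\infty$, yields $(*)$.

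\emph{Method~I upgrade.} For any decomposition $E=\bigcup_n E_n$, the identity $E_x=\bigcup_n(E_n)_x$, countable subadditivity of the Borel measure $\uhm^g$, and the crude bound $\uhm^g\leq\uhm_0^g$ give $\uhm^g(E_x)\leq\sum_n\uhm_0^g((E_n)_x)$ pointwise. Integrating and applying $(*)$ term by term yields
\[\upint\uhm^g(E_x)\,d\whm^h(x)\leq\sum_n\uhm_0^{gh}(E_n),\]
and taking the infimum over all decompositions of $E$ turns the right-hand side into $\uhm^{gh}(E)$. This proves (i).

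\emph{Part (ii) and main obstacle.} Part (ii) follows the same scheme, with the doubling of $g$ used to convert the weighted projection cover $\mc D$ arising in the pre-measure step into an unweighted one, thereby upgrading $\whm^h$ to $\hm^h$ on the left-hand side. Concretely, each weighted pair $(d_i,p_X(E_i))$ is replaced by a bounded number of unweighted sub-pieces whose total $(gh)$-cost is controlled through the doubling constant of $g$ (the multiplicative loss being absorbed in the limit $\delta\to0$); this is the finite-cover counterpart of Howroyd's Theorem~\ref{HwH}. The pre-measure step and the Method~I upgrade are essentially combinatorial once the apparatus is in place, so the main obstacle is precisely this doubling-based refinement, where one must keep the multiplicative loss uniform in $\delta$ without introducing spurious dependence on $h$.
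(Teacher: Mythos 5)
For part (i) your route is genuinely different from the paper's, and it does appear to work. The paper redoes no covering argument at all: it assumes $X,Y$ complete, takes a \si compact set $K\sups E$, uses Lemma~\ref{lem1}(vi) to bound $\uhm^g(E_x)\leq\hm^g(K_x)$, applies the already proved Theorem~\ref{thm:appendix}(ii) to $K$, and then takes the infimum over such $K$ via Lemma~\ref{lem1}(v), so (i) is a three-line reduction. Your premeasure inequality plus the Method~I upgrade avoids the completeness assumption and the \si compact hull machinery at the price of re-running the covering argument for $\uhm_0$; the combinatorial core (weights $d_i=\gamma^{-1}g(\diam p_Y(E_i))$, the sections of a finite $\del$-fine cover of $A$ giving a finite $\del$-fine cover of $A_x$) is sound, and the Method~I step (countable subadditivity of $\uhm^g$ and of the upper integral, then the infimum over decompositions) is clean. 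The one point you pass over is the double limit $\del\to0$, $n\to\infty$: the step $\whm^h(\hat X^{(n)})\to\whm^h(\hat X)$ needs continuity of $\whm^h$ from below on arbitrary increasing sequences (true, via Borel regularity of $\whm^h$, since passing to closures does not change diameters), or else the paper's device in the proof of Theorem~\ref{thm:appendix} of fixing a single $\del$ and a subset $\hat X$ with $\whm^h_\del(\hat X)>\whm^h(X)-\eps$; this deserves a sentence.

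Part (ii) is where there is a genuine gap. The weighted cover you must de-weight is $\mc D=\{(d_i,p_X(E_i))\}$, a cover of a subset of $X$ whose cost is measured by the gauge $h$; converting a weighted cover into an unweighted one of comparable $h$-cost is exactly the content of Theorem~\ref{HwH} ($\whm^h=\hm^h$), and it requires the doubling condition on $h$, not on $g$. The doubling constant of $g$ only controls $g(2r)/g(r)$ on the $Y$-side; it gives no handle on the weights $d_i$ (which range over arbitrarily small positive numbers as the covers refine) nor on covering multiplicities in $X$, and ``replacing each weighted pair by a bounded number of unweighted sub-pieces'' is not how the weighted-to-unweighted comparison is proved -- with only $g$ doubling no such local replacement can control $\sum h(\diam\cdot)$. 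In fact the hypothesis ``$g$ is doubling'' in the statement is evidently a slip for ``$h$ is doubling'': compare Lemma~\ref{uhowroyd} and the Corollary following Theorem~\ref{thm:appendix}, both of which assume $h$ doubling, and note that (ii) is exactly what yields Lemma~\ref{uhowroyd} upon taking $E=X\times Y$. With the correct hypothesis, (ii) is immediate: Theorem~\ref{HwH} applied to $h$ turns $d\whm^h$ into $d\hm^h$, so (ii) is literally (i) -- which is the paper's one-line proof. Your declared ``main obstacle'' is thus both misplaced (it attaches the doubling condition to the wrong gauge) and unnecessary (the conversion is already done once and for all in Theorem~\ref{HwH}).
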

\begin{proof}
The second inequality follows at once from the first one and Theorem~\ref{HwH}.
The first inequality obtains from Theorem~\ref{thm:appendix}(ii) as follows:
We may suppose that $X$ and $Y$ are both complete metric spaces.
Let $K\supseteq E$ be a \si compact set. By Lemma~\ref{lem1}(vi)
$\uhm^g(E_x)\leq\uhm^g(K_x)=\hm^g(K_x)$ for all $x$, hence~\ref{thm:appendix}(ii)
yields
$$
  \upint\uhm^g(E_x)\,d\whm^h(x)\leq\upint\hm^g(K_x)\,d\whm^h(x)
  \leq\hm^{gh}(K).
$$
Apply Lemma~\ref{lem1}(v) to conclude the proof.
\end{proof}
A particular choice of $E=X\times Y$ yields
Lemmas~\ref{howroyd} and~\ref{uhowroyd}.
%

\section*{Acknowledgments}
I would like to thank Michael Hru\v s\'ak for his kind support and
never-ending enlightening discussions while I was
visiting Instituto de matem\'aticas, Unidad Morelia,
Universidad Nacional Auton\'oma de M\'exico,
Peter Elia\v s for numerous valuable comments,
Tomasz Weiss for help with \ref{corazza}, and
Marion Scheepers and Boaz Tsaban who made me write the paper.
\bibliographystyle{amsplain}
\providecommand{\bysame}{\leavevmode\hbox to3em{\hrulefill}\thinspace}
\providecommand{\MR}{\relax\ifhmode\unskip\space\fi MR }
\providecommand{\MRhref}[2]{%
  \href{http://www.ams.org/mathscinet-getitem?mr=#1}{#2}
}
\providecommand{\href}[2]{#2}

\end{document}